\newtheorem{rem}{Remark}[section]
\newtheorem{proposition}[rem]{Proposition}
\newtheorem{prop}[rem]{Proposition}
\newtheorem{lemma}[rem]{Lemma}
\newtheorem{corollary}[rem]{Corollary}
\newtheorem{defi}[rem]{Definition}
\newtheorem{theorem}[rem]{Theorem}
\newcommand{\ra}{\rightarrow}
\newcommand{\PP}{ \mathbb{P}}
\newcommand{\C }{ \mathbb{C}}
\newcommand{\Z}{\mathbb{Z}}
\newcommand{\N}{\mathbb{N}}
\newcommand{\Q}{\mathbb{Q}}
\newcommand{\of}{\mathcal{O}}
\DeclareMathOperator{\NS}{\rm NS}
\DeclareMathOperator{\Def}{\rm Def}
\DeclareMathOperator{\ch}{\rm ch}
\DeclareMathOperator{\td}{\rm Td}
\DeclareMathOperator{\Sing}{\rm Sing}
\DeclareMathOperator{\rk}{\rm rank}
\DeclareMathOperator{\id}{\rm id}
\DeclareMathOperator{\Aut}{\rm Aut}
\DeclareMathOperator{\Fix}{\rm Fix}
\DeclareMathOperator{\diag}{\rm diag}
\DeclareMathOperator{\Sym}{\rm Sym}
\DeclareMathOperator{\Pic}{\rm Pic}
\newcommand{\hsk}{K3}
\newcommand{\hskn}{K3^{\left[n\right]}}
\newcommand{\hk}{hyperk\"ahler }
\title{Calabi--Yau quotients of hyperk\"ahler four-folds}
\author{Chiara Camere}
\thanks{the first named author was partially supported by Vigoni Project 2012-2013 and PRIN 2010-2011: ``Geometria delle Variet\`a Algebriche''} 
\author{Alice Garbagnati}
\thanks{the second named author was partially supported by PRIN 2010-2011: ``Geometria delle Variet\`a Algebriche'' and FIRB 2012 ``Moduli spaces and their applications''}
\author{Giovanni Mongardi}
\thanks{the third named author was supported by FIRB 2012 ``Moduli spaces and their applications''}
\address{Chiara Camere, Universit\`a degli Studi di Milano,
Dipartimento di Matematica,
Via Cesare Saldini 50,
20133 Milano, Italy} 
\email{chiara.camere@unimi.it}
\urladdr{http://www.mat.unimi.it/users/camere/en/index.html}
\address{Alice Garbagnati, Universit\`a degli Studi di Milano, Dipartimento di Matematica, via Cesare Saldini 50 20133 Milano, Italy }
\email{alice.garbagnati@unimi.it}
\urladdr{https://sites.google.com/site/alicegarbagnati/}
\address{Giovanni Mongardi, Universit\`a degli Studi di Milano, Dipartimento di Matematica, via Cesare Saldini 50 20133 Milano, Italy }
\email{giovanni.mongardi@unimi.it}
\subjclass[2010]{Primary 14J50 14J32 14J35; Secondary 14C05}
\keywords{Irreducible holomorphic symplectic manifolds, Hyperk\"ahler manifold, Calabi--Yau 4-folds, Borcea--Voisin construction, Automorphisms, Quotient maps, Non symplectic involutions}
\begin{document}

\begin{abstract}
The aim of this paper is to construct Calabi--Yau 4-folds as crepant resolutions of the quotients of a hyperk\"ahler 4-fold $X$ by a non symplectic involution $\alpha$. We first compute the Hodge numbers of a Calabi--Yau constructed in this way in a general setting and then we apply the results to several specific examples of  non symplectic involutions, producing Calabi--Yau 4-folds with different Hodge diamonds. Then we restrict ourselves to the case where $X$ is the Hilbert scheme of two points on a K3 surface $S$ and the involution $\alpha$ is induced by a non symplectic involution on the K3 surface. In this case we compare the Calabi--Yau 4-fold $Y_S$, which is the crepant resolution of $X/\alpha$, with the Calabi--Yau 4-fold $Z_S$, constructed from $S$  through the Borcea--Voisin construction. We give several explicit geometrical examples of both these Calabi--Yau 4-folds describing maps related to interesting linear systems as well as a rational $2:1$ map from $Z_S$ to $Y_S$.
\end{abstract}
\maketitle
\section{Introduction}\label{intro}
By the famous decomposition theorem of Beauville \cite{beau83} and Bogomolov \cite{bogo}, the Ricci flat varieties decompose, after an \'etale cover, into the product of three fundamental building blocks: Abelian varieties, \hk manifolds and Calabi--Yau manifolds. The aim of this paper is to construct a relation between two of these blocks in dimension 4, indeed our starting point is the observation that the presence of a non symplectic involution $\alpha$ on a \hk 4-fold $X$ allows one to construct 
a Calabi--Yau 4-fold as crepant resolution $\widetilde{X/\alpha}$ of the quotient $X/\alpha$.

We observe that several quotients of \hk varieties have been deeply investigated both in the case of symplectic and non symplectic actions and in particular in low dimension. In dimension 2, Calabi--Yau varieties and \hk varieties collapse to the same class of surfaces, the K3 surfaces. In the case of automorphisms acting on K3 surfaces, it is well-known that the quotient by a symplectic automorphism gives, after a minimal resolution, again a K3 surface. This is known not to be the case in general for higher dimensional hyperk\"ahler manifolds; indeed, given a \hk variety with a symplectic automorphism $\alpha$, there is in general no resolution of $X/\sigma$ where the natural quotient symplectic form on the smooth locus is preserved, and partial resolutions give singular irreducible holomorphic symplectic manifolds (see for example \cite{Fujiki}). Up to now, the only known case where a symplectic resolution of the quotient exists is described in \cite{kawat}. So generically, the quotient of a \hk variety by a finite 
automorphism does not produce a \hk variety, but, as we noticed above, it can produce, under some conditions, a Calabi--Yau variety and this is the main topic of this paper.

In Section \ref{sec: general-teo} we consider the quotients of a \hk variety $X$, of dimension $2n$, by a finite automorphism $\alpha$, of prime order $p$. We ask when it is possible to obtain a quotient $X/\alpha$ which has trivial canonical bundle and when it is possible to construct a crepant resolution of $X/\alpha$ by blowing up its singular locus. The main result of this section is Theorem \ref{prop: main-thm higher dim}, where we state that the good candidates are \hk varieties of dimension $2p$ which admit a non symplectic automorphism of order $p$. We also observe that there is a condition on the dimension of the components of the fixed locus, which has to be $p$. This condition automatically excludes the natural non symplectic automorphisms of order $p$ on $S^{[p]}$ unless $p=2$. This is one of the motivations for our attention to the case $p=2$. So we restrict our attention to \hk variety of dimension 4 admitting non symplectic involutions. The study of non symplectic involutions on \hk variety is 
the topic of several papers: in \cite{BeauvilleInvolutions} a topological classification is given, in \cite{bcs14} and \cite{mtwkum} a lattice theoretical classification of automorphisms on two different type of \hk 4-folds is presented with many explicit examples. Other explicit examples are given in \cite{ogr06} and \cite{Ferretti}, \cite{mw}, \cite{ow}, \cite{ikkr}.

All these works provide a large set of explicit examples of non symplectic involutions $\alpha$ defined on \hk 4-folds $X$, and thus one is able to effectively construct Calabi--Yau 4-folds as quotients.

In Section \ref{sec: quotients of hyperkahler} we consider a hyperk\"ahler 4-fold $X$ with a non symplectic involution $\alpha$, and we observe that a crepant resolution of $X/\alpha$ is simply given by blowing up the singular locus. If one knows the action of the non symplectic involution $\alpha$ both on the cohomology of $X$ and on $X$ (more precisely if one knows the topology of the fixed locus of $\alpha$ on $X$), one is able to compute the Hodge numbers of the Calabi--Yau $\widetilde{X/\alpha}$. This is done in the general context in Theorem \ref{prop: hodge diamonds in general} and in some specific examples in Section \ref{subsec: generalized Kummer}, Proposition \ref{prop: Hodge diamond Y for K3 type, also non natural} and Section \ref{subsec Beauville non natural}.

Then in Section \ref{sec: natural symplectic involution} we restrict ourselves to some of the best known \hk 4-folds: denoted by $S$ a K3 surface, the Hilbert scheme of two points of $S$, $S^{[2]}$, is a \hk 4-fold. The 4-fold $S^{[2]}$ is the blow up of $\left(S\times S\right)/\sigma$ in its singular locus, where $\sigma\in\Aut(S\times S)$ is the automorphism switching the two copies of $S$. If $S$ admits a non symplectic involution $\iota_S$, then the involution $\iota_S\times \iota_S\in\Aut(S\times S)$ induces a non symplectic involution on $S^{[2]}$, denoted by $\iota_S^{[2]}$ and is called the natural non symplectic involution of $(S,\iota_S)$. By the construction described above this allows one to produce Calabi--Yau 4-folds, denoted by $Y_S$, as crepant resolutions of $S^{[2]}/\iota_S^{[2]}$ and to compute 
their Hodge numbers which depend only on the topological properties of the fixed locus of $\iota_S$ on $S$, as shown in Theorem \ref{prop: HOdge diamond natural}. Since it is quite a natural question, we have to remark here that neither the mirror symmetry at the level of the K3 surface $S$ nor the lattice theoretic mirror symmetry at the level of the \hk 4-fold $S^{[2]}$ produces a Calabi--Yau 4-fold which is mirror symmetric to $Y_S$ (see Section \ref{subsec: remarks on mirror symmetry}).

Essentially, we produce a Calabi--Yau 4-fold, $Y_S$, by the data $(S,\iota_S)$, where $S$ is a K3 surface and $\iota_S$ is a non symplectic involution acting on it.
On the other hand, there is a very well known and natural  way to produce a (different) Calabi--Yau 4-fold starting from these data, which is the Borcea--Voisin construction, cf.\ \cite{Cynk-Hulek} and \cite{Dillies}. This construction is recalled in Section \ref{subsec: ZS} and reduces in our case to the blow up of $(S\times S)/(\iota_S\times \iota_S)$ in its singular locus, producing another smooth Calabi--Yau 4-fold, denoted in the sequel by $Z_S$. By our construction, one immediately finds that there is a $2:1$ rational map  $Z_S\dashrightarrow Y_S$, indeed $Y_S$ is birational to $\left(S\times S\right)/\langle \sigma, \iota_S\times \iota_S\rangle$ and $Z_S$ is birational to $\left(S\times S\right)/\langle  \iota_S\times \iota_S\rangle$, so the covering involution of the $2:1$ map $Z_S\dashrightarrow Y_S$ is induced on $Z_S$ by the action of $\sigma$ on $S\times S$, as shown in Section \ref{subsec: quotient of ZS}. So we prove that $Y_S$ is a Calabi--Yau 4-fold which is 2:1 covered both by a \hk 4-fold 
and by a Calabi--Yau 4-fold and in fact it has a bidouble cover which is $S\times S$. Since the construction of $Y_S$ is quite explicit, we are also able to describe a $\Q$-basis of its Picard lattice and to identify two 2-divisible divisors: one which is associated to the double cover $\widetilde{S^{[2]}}\ra Y_S$ where $\widetilde{S^{[2]}}$ is the blow up of $S^{[2]}$ in the fixed locus of $\iota_S^{[2]}$; the other is associated to the rational double cover $Z_S\dashrightarrow Y_Z$. This is done in Section \ref{subsec: the picard group of YS}. 

In order to better describe the varieties constructed, we observe that by our assumptions the group generated by $\sigma\in\Aut(S\times S)$ and by $\iota_S\times \id\in\Aut(S\times S)$ acts on $S\times S$ and is isomorphic to the dihedral group of order 8. In Section \ref{sec: quotients} we describe the quotients of $S\times S$ by subgroups of this group. Let $W$ be the surface $S/\iota_S$ and let $\left(S\times S\right)/\langle \sigma,\iota_S\times\id\rangle$ be $W^{(2)}$ (where $W^{(2)}$ is the quotient of $W\times W$ by the automorphism which switches the two pairs of $W$). So the 4-folds $Y_S$ and $Z_S$ are birational to (possibly singular) 4-folds which are respectively $2:1$ and $2^2:1$ covers of $W^{(2)}$.
The 4-fold $\left(S\times S\right)/\langle \sigma, \iota_S\times \iota_S\rangle$ is by construction birational to $Y_S$ and it is also birational to the blow up, $\widetilde{Z_S/\sigma_Z}$, of $Z_S/\sigma_Z$ in its singular locus. We will prove that the 4-folds $Y_S$ and $\widetilde{Z_S/\sigma_Z}$ are isomorphic (and not only birational) if the involution $\iota_S$ is free on $S$. When $\iota_S$ fixes exactly one curve on $S$,  $\left(S\times S\right)/\langle \sigma, \iota_S\times \iota_S\rangle$ is singular along three surfaces meeting transversally in a curve and the 4-folds $Y_S$ and $\widetilde{Z_S/\sigma_Z}$ are obtained by blowing up these singular surfaces in a different order.

In Section \ref{sec: projective models} we give a detailed geometric descriptions of $Y_S$, $Z_S$ and $S^{[2]}$ and of linear systems on them, under some conditions on $(S,\iota_S)$. Indeed, we first explain how to compute the dimension of certain linear systems induced on these varieties by nef and big divisors on $S$ in Theorem \ref{prop: h^0 for the divisors H}, and then we explicitly describe projective models associated to certain linear systems. In particular, we show that if $S$ is a $2:1$ cover of $\mathbb{P}^2$, $Y_S$ is a $2:1$ cover of $(\mathbb{P}^2)^{(2)}$ embedded in $\mathbb{P}^5$ and $Z_S$ is a $2:1$ cover of $\mathbb{P}^2\times \mathbb{P}^2$ embedded in $\mathbb{P}^8$ by the Segre embedding, see Proposition \ref{prop: maps if NS=2}.

If $S$ admits a genus 1 fibration, then $S^{[2]}$ admits a Lagrangian fibration whose smooth fibers are Abelian surfaces $A_t$ (generically isomorphic to a product of two elliptic curves), $Z_S$  and $Y_S$ admit fibrations whose fibers are the Kummer surfaces of the Abelian surfaces $A_t$. Moreover $Z_S$ has an elliptic fibration and a fibration in Calabi--Yau 3-folds of Borcea--Voisin type, see Propositions \ref{prop: maps if NS=U(2)}, \ref{prop: maps S/iota=P1P1}, \ref{prop: fibrations FX, S/iota=F4}.

\subsection*{Acknowledgements}

The authors would like to thank Prof. Bert van Geemen for interesting discussions and for his precious comments on a preliminary version of this paper, and Prof. Keiji Oguiso for an interesting and useful discussion.

\section{Preliminaries}\label{sec: preliminaries}\label{intro-auto}

In this Section we collect some known results which are useful in the sequel. 
\begin{defi}
Le $Y$ be a smooth compact K\"ahler manifold of dimension $n$. Then $Y$ is called Calabi--Yau variety if 
\begin{itemize}
\item The canonical bundle of $Y$ is trivial and
\item $h^{i,0}(Y)=0$ for every $i=1,\ldots, n-1.$
\end{itemize}
\end{defi}
We underline that we do not require a Calabi--Yau variety to be simply connected. 
\begin{defi}
Let $X$ be a smooth compact K\"ahler manifold. Then $X$ is called \hk variety or, equivalently, IHS variety if 
\begin{itemize}
\item $X$ is simply connected
\item $H^{2,0}(X)=\mathbb{C}\omega_X$, where $\omega_X$ is a symplectic form.
\end{itemize}
\end{defi}
We observe that the existence of a symplectic form on a \hk variety $X$ implies that the canonical bundle of $X$ is trivial and the complex dimension of $X$ is even.

Fundamental examples of \hk  manifolds were discovered by Beauville \cite{beau83}: he produced two families of \hk manifolds in every even dimension and they are constructed as follows. Let $S$ be a $K3$ surface and let $S^{[n]}$ denote the Hilbert scheme of length $n$ zero dimensional subschemes of $S$. Then $S^{[n]}$ is a resolution of the $n$-th symmetric product $S^{(n)}$ and it has a unique symplectic form. K\"ahler deformations of $S^{[n]}$ are called manifolds of $\hskn$ type. Similarly, if $A$ is an Abelian surface, $A^{[n+1]}$ has a symplectic form and a fibre $K_n(A)$ of the Albanese map is \hk and is called a generalized Kummer manifold.

The existence of a symplectic form provides a canonically defined quadratic form on the second cohomology of \hk manifolds, which is given in terms of the top self intersection of divisors. This form is usually called the Beauville--Bogomolov--Fujiki form and gives a lattice structure to the second integral cohomology. In the above two examples, the lattices are the following:
\[ H^2(S^{[n]},\mathbb{Z}) \cong H^2(S,\mathbb{Z})\oplus \mathbb{Z}\delta \cong U^3\oplus E_8(-1)^2\oplus (-2n+2), \]
\[ H^2(K_n(A),\mathbb{Z}) \cong H^2(A,\mathbb{Z})\oplus \mathbb{Z}\delta \cong U^3\oplus (-2n-2),\]
where $2\delta$ is the class of the exceptional divisor of the map $S^{[n]}\rightarrow S^{(n)}$ or of the Albanese fibre of the $A^{[n+1]}\rightarrow A^{(n+1)}$, respectively.\\

In the following we will concentrate ourselves on 4-dimensional \hk varieties so we fix here some useful notation.
By construction, $S^{[2]}$ is the blow up of the singular quotient of $S\times S$ by the map which switches the two factors. 
\begin{defi}
We denote by $\sigma\in\Aut(S\times S)$ the map $\sigma:(p,q)\mapsto (q,p)$, for each $(p,q)\in S\times S$, i.e.\ $\sigma$ is the map that switches the two factors in $S\times S$.

If $g$ is an automorphism of a K3 surface $S$ (resp. an Abelian surface $A$), then $g\times g\in\Aut(S\times S)$ (resp. $g\times g\times g\in\Aut(A\times A\times A)$) induces a unique automorphism on $S^{[2]}$ (resp. $K_2(A)$), called natural automorphism induced by $g$. 

The natural automorphism of $S^{[2]}$ induced by $g$ is denoted by $g^{[2]}$. 

We denote by $\iota_S$ a non symplectic involution on $S$ and thus by $\iota_S^{[2]}$ the natural non symplectic involution induced by $\iota_S$ on $S^{[2]}$. 
\end{defi}

\section{Quotients of hyperk\"ahler varieties by automorphisms}\label{sec: general-teo}

In the following we will consider quotients of \hk varieties by certain finite automorphisms and crepant resolutions of these quotients. 
The main theorem of this section is that there exists a crepant resolution of the quotient of a hyperk\"ahler variety of dimension $2p$ by a non symplectic automorphism of prime order $p$ whenever the fixed locus is pure of dimension $p$; this hypothesis is surely satisfied for non symplectic involutions on hyperk\"ahler $4$-folds.
First we recall some basic definitions and known results.

\begin{defi} 
Let $V$ be smooth variety of dimension $m$ and let $\alpha\in \Aut(V)$ be an automorphism of order $p$ with a non-empty fixed locus. 

We denote by $A$ be the matrix which linearizes $\alpha$ near a component $C$ of its fixed locus and let $(\zeta_p^{a_1},\ldots \zeta_p^{a_{m}})$, with $0\leq a_i<p$, be its eigenvalues. 

The age of $\alpha$ near $C$ is defined as $(\sum_{i=1}^m a_i)/p$.
\end{defi}

\begin{prop}{\rm (see for example \cite[Theorem 6.4.3 and Proposition 6.4.4]{joyce})}\label{prop: terminal sing no crepant}
With the same notation as before, let us assume that $A\in SL(\C,n)$. The quotient $Z/\alpha$ has canonical non terminal singularities on the image of $C$ if and only if the age of $\alpha$ near $C$ is 1.

If $Z/\alpha$ has terminal singularities, then it does not admit a crepant resolution.
\end{prop}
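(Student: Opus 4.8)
The argument is entirely local around the image $\overline C\subset V/\alpha$ of $C$, so the plan is to first replace $V/\alpha$ by a model cyclic quotient singularity. Since $\alpha$ has finite order and $V$ is smooth, $\Fix(\alpha)$ is smooth and $\alpha$ is linearized in a neighbourhood of $C$ by the matrix $A$ of the setup. Writing $c=\dim C$ and $e=m-c$ for the codimension, and reordering so that $a_1,\dots,a_e$ are the exponents transverse to $C$, each of them lies in $\{1,\dots,p-1\}$ precisely because $C$ is a whole (connected) component of the fixed locus, so that $A$ has no fixed vector transverse to $C$. One then obtains an analytic isomorphism near a point of $\overline C$
\[
V/\alpha\;\cong\;\C^{c}\times\bigl(\C^{e}/\mu_p\bigr),\qquad \mu_p=\langle\diag(\zeta_p^{a_1},\dots,\zeta_p^{a_e})\rangle,
\]
and since ``canonical'', ``terminal'' and ``admits a crepant resolution'' are all insensitive to the smooth factor $\C^{c}$, it suffices to analyse $\C^{e}/\mu_p$ at the origin. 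Observe that $e\ge 2$, because a single transverse exponent in $\{1,\dots,p-1\}$ cannot be divisible by $p$, so $e=1$ is incompatible with $A\in SL(m,\C)$; consequently $\mu_p$ acts on $\C^{e}$ freely away from the origin, in particular with no quasi-reflections, and the singularity is isolated.

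Next I would use the hypothesis $A\in SL(m,\C)$, i.e.\ $\sum_{i=1}^{m}a_i\equiv 0\pmod p$, which forces $\sum_{i=1}^{e}a_i\equiv 0\pmod p$ as well. Thus $dz_1\wedge\dots\wedge dz_e$ is $\mu_p$-invariant and descends to a nowhere-vanishing section of the canonical sheaf on the smooth locus of $\C^{e}/\mu_p$: the quotient is Gorenstein of index $1$. Running the same determinant computation for each power $\alpha^k$ with $1\le k\le p-1$, whose transverse exponents are the residues $\overline{ka_i}\in\{1,\dots,p-1\}$ (non-zero since $p$ is prime), gives $\sum_{i=1}^{e}\overline{ka_i}\equiv 0\pmod p$; therefore the age of $\alpha^k$ near $C$, namely $\tfrac1p\sum_{i=1}^{e}\overline{ka_i}$, is a strictly positive integer, in particular $\ge 1$.

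The heart of the proof is then the Reid--Tai criterion: for a finite $G\subset SL(e,\C)$ acting without quasi-reflections, $\C^{e}/G$ has canonical (resp.\ terminal) singularities if and only if every non-trivial element of $G$ has age $\ge 1$ (resp.\ $>1$). By the previous step every non-trivial $\alpha^k$ has integral age $\ge 1$, so $\C^{e}/\mu_p$ — hence $V/\alpha$ along $\overline C$ — is always canonical, and it is non-terminal exactly when the age of some $\alpha^k$, $1\le k\le p-1$, near $C$ equals $1$. As passing from $\alpha$ to one of its powers changes neither $C$ nor the quotient, this amounts to saying that some generator of $\langle\alpha\rangle$ is junior along $C$; when $\alpha$ is a non-symplectic involution it is the only generator, which yields the stated equivalence ``$V/\alpha$ has canonical non-terminal singularities along $\overline C$ $\iff$ the age of $\alpha$ near $C$ is $1$''.

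Finally, the second assertion follows formally. If $V/\alpha$ has terminal singularities, then by definition every resolution $f\colon\widetilde{V/\alpha}\to V/\alpha$ satisfies $K_{\widetilde{V/\alpha}}=f^{*}K_{V/\alpha}+\sum_j a_jE_j$ with all $a_j>0$, so $f$ cannot be crepant unless it extracts no divisor at all; but the local reduction above shows that transverse to $\overline C$ one meets a genuine (isolated, non-smooth since $e\ge2$) quotient singularity of $\C^{e}/\mu_p$, so any resolution must extract at least one exceptional divisor over $\overline C$. The step I expect to need the most care is the combinatorial bookkeeping with the ages of the powers $\alpha^k$ together with the verification that the $SL$-hypothesis makes all of them integral — this is precisely what makes ``canonical'' automatic and singles out ``the age of $\alpha$ equals $1$'' as the obstruction to terminality — whereas the two geometric ingredients, linearizability near $C$ and the Reid--Tai criterion, are classical and would be invoked as black boxes.
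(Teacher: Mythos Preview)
The paper does not give a proof of this proposition; it is simply cited from Joyce's book, so there is no ``paper's own proof'' to compare against. Your argument is a correct and self-contained reconstruction of the standard one: localize to the transverse cyclic quotient $\C^{e}/\mu_p$, use the $SL$ hypothesis to see that the quotient is Gorenstein and that every non-trivial power of $\alpha$ has integral age $\ge 1$, and then invoke the Reid--Tai criterion to conclude that the singularity is always canonical and is non-terminal precisely when some element is junior. Your observation that, strictly speaking, the criterion involves \emph{some} generator of $\langle\alpha\rangle$ rather than $\alpha$ itself is accurate; in the paper's applications ($p=2$, or more generally when the relevant local data already put $\alpha$ in the junior form) this distinction is immaterial. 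The second assertion is handled correctly as well: terminality forces all discrepancies to be strictly positive, while the transverse quotient $\C^{e}/\mu_p$ with $e\ge 2$ is genuinely singular, so any resolution extracts an exceptional divisor and cannot be crepant.
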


\begin{defi} Let $V$ be a hyperk\"ahler variety of dimension $2n$ and let $\omega_V$ be the symplectic form on $V$; the automorphism $\alpha$ is said to be {\it symplectic} if $\alpha^*\omega_V=\omega_V$, {\it non symplectic} otherwise.\end{defi} 

When $G=\langle\alpha\rangle$ acts non symplectically and is cyclic of prime order $p$, it is always possible to find a generator, that we keep on denoting $\alpha$, such that $\alpha^*\omega_V=\zeta_p\omega_V$ for $\zeta_p$ a primitive $p$-th root of unity.

Let us consider a local system of coordinates $(x_1,\ldots,x_{2n})$ near a connected component of the fixed locus. The automorphism $\alpha$ linearizes  to a matrix $A$ in $GL(\C,2n)$. If the order of $\alpha$ is a prime number $p$, then the eigenvalues of $A$ are $(\zeta_p^{a_1},\ldots \zeta_p^{a_{2n}})$.
 
Recall the following standard fact about the components of the fixed locus.

\begin{lemma}\label{codim-fix}
Let $V$ be a hyperk\"ahler manifold of dimension $2n$ and let $\alpha$ be a non degenerate automorphism of finite order acting on $X$. Then the fixed locus has codimension $\geq 2$ if $\alpha$ is symplectic, and $\geq n$ if $\alpha$ is non symplectic.
\end{lemma}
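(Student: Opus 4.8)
\emph{Proof proposal.} The plan is to reduce everything to linear algebra on the tangent space at a fixed point. First I would fix a connected component $C$ of $\Fix(\alpha)$ and a point $x\in C$, and invoke the classical linearization (Cartan) lemma: since $\alpha$ has finite order, in suitable local analytic coordinates centred at $x$ it agrees with its differential $A\in GL(\C,2n)$; consequently $C$ is smooth near $x$ and $T_xC$ is the eigenspace $V_1:=\ker(A-\id)$. Writing $V_{\neq 1}$ for the span of the eigenspaces of $A$ with eigenvalue $\neq 1$, so that $T_xV=V_1\oplus V_{\neq 1}$ ($A$ being diagonalizable), one has that the codimension of $C$ equals $\dim V_{\neq 1}$, and $V_{\neq 1}\neq 0$ because $\alpha$ is not the identity and $V$ is connected. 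Thus it suffices to bound $\dim V_{\neq 1}$ from below, equivalently $\dim V_1$ from above.

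Next I would bring in the symplectic form. Since $H^{2,0}(V)=\C\omega_V$ is one-dimensional, $\alpha^*\omega_V=\lambda\,\omega_V$ for a scalar $\lambda$, with $\lambda=1$ precisely when $\alpha$ is symplectic. Restricting $\omega_V$ to $T_xV$ (where it is non-degenerate) and using $d\alpha_x=A$ yields a non-degenerate alternating form $\omega$ on $T_xV$ satisfying $\omega(Av,Aw)=\lambda\,\omega(v,w)$ for all $v,w$.

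For the symplectic case I would show that $V_1$ and $V_{\neq 1}$ are $\omega$-orthogonal: if $v\in V_1$ and $w\in V_{\neq 1}$, then $\omega(v,w)=\omega(Av,Aw)=\omega(v,Aw)$, hence $\omega\bigl(v,(A-\id)w\bigr)=0$; since $A-\id$ is invertible on $V_{\neq 1}$ this gives $\omega(v,w')=0$ for every $w'\in V_{\neq1}$. As $\omega$ is non-degenerate on $T_xV$ and the two summands are orthogonal, it restricts to a non-degenerate alternating form on each, so $\dim V_{\neq 1}$ is even; being non-zero, it is $\geq 2$. For the non symplectic case, for $v,v'\in V_1$ I would compute $\omega(v,v')=\omega(Av,Av')=\lambda\,\omega(v,v')$ with $\lambda\neq1$, forcing $\omega(v,v')=0$; hence $V_1$ is isotropic for the symplectic form $\omega$ on the $2n$-dimensional space $T_xV$, so $\dim V_1\leq n$ and the codimension of $C$ is $2n-\dim V_1\geq n$.

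I do not expect a genuine obstacle here: the content is elementary once the framework is in place. The only points deserving care are the smoothness of $\Fix(\alpha)$ and the identification $T_xC=V_1$, which are exactly what the linearization lemma supplies, and, in the non symplectic case, the input $\lambda\neq1$ (the very definition) together with the standard fact that an isotropic subspace of a $2n$-dimensional symplectic vector space has dimension at most $n$. One could alternatively argue directly with the global holomorphic $2$-form rather than passing to $T_xV$, but the tangent-space formulation seems cleanest.
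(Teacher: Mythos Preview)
Your proof is correct and follows essentially the same route as the paper's. The non symplectic case is identical in substance (the tangent space $V_1$ to a fixed component is $\omega$-isotropic, hence of dimension $\leq n$); for the symplectic case the paper simply cites the fact that every fixed component is a symplectic submanifold, whereas you supply the short linear-algebraic verification of this (the $\omega$-orthogonality of $V_1$ and $V_{\neq 1}$), making your argument self-contained but not conceptually different.
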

\proof
When $\alpha$ is symplectic this follows from the fact that every connected component of the fixed locus is symplectic, as shown in \cite[Proposition 3.5]{Oguiso-Kummers}.

When $\alpha$ is non symplectic, suppose on the contrary that there exists a connected component of codimension $<n$. Then the eigenspace relative to the eigenvalue $1$ of $A$ would be an isotropic subspace of dimension $>n$, and this is impossible.
\endproof

\begin{lemma}\label{local-form}
Let $V$ be a hyperk\"ahler variety of dimension $2n$ and let $\alpha\in \Aut(V)$ be an automorphism of order $p$ with a non-empty fixed locus. Let $A$ be the matrix which linearizes $\alpha$ near a component $C$ of its fixed locus and let $(\zeta_p^{a_1},\ldots \zeta_p^{a_{2n}})$, with $0\leq a_i<p$, be its eigenvalues.

Then, there exist local coordinates $(x_1,\dots,x_{2n})$ in an open neighbourhood containing $C$ such that:
\begin{enumerate}
 \item if $\alpha$ is symplectic, the spectrum of $A$ is the union of $n$ pairs of the form $(\zeta_p^{a_j},\zeta_p^{p-a_j})$ with $a_j>0$, for $j=1,\dots,n$;
 \item if $\alpha$ is non symplectic, the spectrum of $A$ is the union of $s\leq n$ pairs of the form $(1,\zeta_p)$ or $(\zeta_p^{a_j},\zeta_p^{p+1-a_j})$ with $a_j>0$, for $j=1,\dots,s$, plus the eigenvalue $\zeta_p^{\frac{p+1}{2}}$ with multiplicity $2n-2s$.
\end{enumerate}

\end{lemma}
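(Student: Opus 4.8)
The plan is to reduce the statement to linear algebra at a single point of $C$, where the holomorphic symplectic form constrains the eigenvalues. First I would fix $x\in C$, set $T=T_xV$, let $A=d\alpha_x\in GL(\C,2n)$ be the linearization already referred to in the text, and write $\omega$ for the value at $x$ of $\omega_V$, which is a nondegenerate alternating form on $T$. Since $\alpha^p=\id$, the matrix $A$ is diagonalizable with eigenvalues among the $p$-th roots of unity, so $T=\bigoplus_{j=0}^{p-1}E_j$ with $E_j=\ker(A-\zeta_p^j\,\id)$; here $E_0=T_xC$, because the fixed locus of a finite order automorphism is smooth and, in linearizing coordinates, equals the $1$-eigenspace of $A$. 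Diagonalizing $A$ by a further linear change of coordinates, the action of $\alpha$ becomes $(x_1,\dots,x_{2n})\mapsto(\zeta_p^{a_1}x_1,\dots,\zeta_p^{a_{2n}}x_{2n})$, so what must be determined is the multiset $\{a_1,\dots,a_{2n}\}$, equivalently the dimensions $\dim E_j$; the eigenvalue $1$ occurs exactly along $T_xC$, contributing $\tfrac{1}{2}\dim C$ of the pairs.

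The crucial input is the constraint coming from $\omega_V$. Since $H^{2,0}(V)=\C\omega_V$, we have $\alpha^*\omega_V=\zeta_p^k\omega_V$ for some $k$, with $k=0$ exactly when $\alpha$ is symplectic; in the non symplectic case I would use the generator chosen just before the statement, so that $k=1$. Evaluating this identity at $x$ gives $\omega(Av,Aw)=\zeta_p^k\omega(v,w)$ for all $v,w\in T$, hence for $v\in E_i$, $w\in E_j$ one gets $(\zeta_p^{i+j}-\zeta_p^k)\,\omega(v,w)=0$; therefore $\omega(E_i,E_j)=0$ unless $i+j\equiv k\pmod p$. Nondegeneracy of $\omega$ then forces $\omega$ to restrict to a perfect pairing between $E_i$ and $E_{k-i}$ (indices taken modulo $p$); in particular $\dim E_i=\dim E_{k-i}$, and for an index with $2i\equiv k\pmod p$ the restriction $\omega|_{E_i}$ is itself nondegenerate and alternating, so $\dim E_i$ is even.

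From here it is bookkeeping. The $2n$ eigenvalues of $A$ come in pairs: each $E_i$ is matched with $E_{k-i}$, and a self-matched eigenspace (one with $2i\equiv k$) is split internally into pairs using the nondegenerate alternating form $\omega|_{E_i}$; altogether this gives $n$ pairs, each of the form $(\zeta_p^i,\zeta_p^{k-i})$. If $\alpha$ is symplectic ($k=0$) these pairs are $(1,1)$ coming from $E_0=T_xC$, the pair $(-1,-1)$ coming from $E_1$ when $p=2$, and $(\zeta_p^{a_j},\zeta_p^{p-a_j})$ with $a_j>0$ otherwise, which is case (1). If $\alpha$ is non symplectic ($k=1$), then for $p$ odd the unique self-matched index is $\tfrac{p+1}{2}$, whose eigenspace has even dimension, which I write as $2n-2s$; the remaining eigenspaces pair up as $E_i\leftrightarrow E_{1-i}$ with $i\neq 1-i$, and a short check on residues modulo $p$ shows each such index-pair is either $\{0,1\}$, giving eigenvalue-pairs $(1,\zeta_p)$, or $\{i,p+1-i\}$ with $0<i<p$, giving $(\zeta_p^{a_j},\zeta_p^{p+1-a_j})$ — exactly the $s$ pairs of case (2); here $s\le n$ since $\dim E_{(p+1)/2}=2n-2s\ge0$, and for $p=2$ the self-matched index is absent, $s=n$, and every pair is $(1,-1)$.

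I do not expect a serious obstacle, since the argument is short, but the one genuinely substantive step is the identity $\omega(E_i,E_j)=0$ for $i+j\not\equiv k\pmod p$ together with its corollary that a self-matched eigenspace carries a nondegenerate alternating form and is hence even-dimensional; the remaining ingredients, namely diagonalizability of a finite order matrix and the arithmetic of residues modulo $p$, are routine, the only mildly delicate point being the degenerate case $p=2$, where the extra eigenvalue $\zeta_p^{(p+1)/2}$ disappears. I would also remark that this argument recovers Lemma~\ref{codim-fix}: the codimension of $C$ in $V$ equals $2n-\dim E_0$, which is $\ge 2n-s\ge n$ in the non symplectic case, and is $\ge 2$ in the symplectic case since $\dim E_0$ is even and $E_0\neq T$ (as $\alpha\neq\id$).
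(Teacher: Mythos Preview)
Your proof is correct and rests on the same idea as the paper's: both use the identity $\alpha^*\omega_V=\lambda_\alpha\omega_V$ at a fixed point to force the eigenvalues of $A$ into pairs $(\mu,\lambda_\alpha\mu^{-1})$. The paper phrases this via the matrix relation ${}^tAJA=\lambda_\alpha J$ and the similarity $A^{-1}=\lambda_\alpha^{-1}J^{-1}\,{}^tA\,J$, whereas you phrase it through the eigenspace orthogonality $\omega(E_i,E_j)=0$ for $i+j\not\equiv k\pmod p$ and the perfect pairings it induces. The one genuine difference is self-containment: for $p=2$ the paper invokes external results (fixed components of symplectic involutions are symplectic, those of non-symplectic involutions are Lagrangian) to control multiplicities, while your observation that a self-matched eigenspace carries a nondegenerate alternating form and is hence even-dimensional handles all primes uniformly and, as you remark, recovers Lemma~\ref{codim-fix} for free.
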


\proof

Fix a component of the fixed locus; we choose local coordinates $(x_1,\ldots,x_{2n})$ such that the symplectic form $\omega_V$ is represented by the standard symplectic matrix $J$. 

Since $\alpha$ is an automorphism of $V$, it preserves the Hodge decomposition of $H^2(V,\C)$ and so $\alpha^*(\omega)=\lambda_\alpha \omega$ where $\lambda_{\alpha}\in \C^*$. Moreover, since $\alpha$ has order $p$, $\lambda_{\alpha}^p=1$. If $\alpha$ is a symplectic automorphism, then $\lambda_{\alpha}=1$, otherwise, without loss of generality, we can assume that $\lambda_{\alpha}=\zeta_p$, where $\zeta_p=e^{2i\pi/p}.$

Since $\alpha$ preserves $H^{2,0}(V)$, locally we have $^tAJA=\lambda_{\alpha}J$, hence $A^{-1}=\lambda_{\alpha}^{-1}J^{-1}\ ^tAJ$. Let $\mu=\zeta_p^{a_j}$ be one of the eigenvalues of $A$; then $\lambda_{\alpha}\mu^{-1}$ is an eigenvalue of $A$ as well, distinct from $\mu$ if $\mu^2\neq \lambda_{\alpha}$. Thus we obtain the following possibilities:

\begin{enumerate}
 \item $p=2$: in this case, all the eigenvalues satisfy $\mu^2=1$. If $\alpha$ is symplectic, all the connected components of the fixed locus are symplectic \cite[Proposition 3]{CamereInvolutions}, hence even-dimensional, and this implies that $\pm1$ occur both with even multiplicity, so that  $\det A=1$. If  $\alpha$ is non symplectic, all the connected components are Lagrangian submanifolds \cite[Lemma 1]{BeauvilleInvolutions}, so that the multiplicity of $\mu=1$ is exactly $n$ and $\det A=(-1)^{n}$.
 \item $p>2$, $\lambda_{\alpha}=1$: the eigenvalues of $\alpha$ are $n$ pairs of the form $(\zeta_p^{a_j},\zeta_p^{p-a_j})$ with $a_j>0$, for $j=1,\dots,n$, and the determinant is $\det A=\zeta_p^{\sum_{i=1}^{n} (a_{i}+p-a_{i})}=1$.
 \item $p>2$, $\lambda_{\alpha}=\zeta_p$: the eigenvalues of $\alpha$ are $s\leq n$ pairs of the form $(1,\zeta_p)$ or $(\zeta_p^{a_j},\zeta_p^{p+1-a_j})$ with $a_j>0$, for $j=1,\dots,s$, plus the eigenvalue $\zeta_p^{\frac{p+1}{2}}$ with multiplicity $2n-2s$. Here $\det A=\zeta_p^{\sum_{i=1}^{s} (a_{i}+p-a_{i}+1)+(n-s)(p+1)}=\zeta_p^n$.
\end{enumerate}

\endproof

Singular quotients of hyperk\"ahler manifolds have already been studied in the literature, although the accent has always been on using quotients by symplectic automorphisms to construct singular symplectic manifolds and look for possible desingularizations. The following results include also results previously obtained by Fujiki \cite{Fujiki} and Menet \cite[Proposition 2.39]{MenetPhD}.

\begin{proposition}\label{prop: symplectic case}
 Let $V$ be a hyperk\"ahler variety of dimension $2n$ and let $\alpha\in \Aut(V)$ be a symplectic automorphism of order $p$ with a non-empty fixed locus. Let $A$ be the matrix which linearizes $\alpha$ near a component of its fixed locus and let $(\zeta_p^{a_1},\ldots \zeta_p^{a_{2n}})$, with $0\leq a_i<p$, be its eigenvalues.
 
In this case $\alpha$ preserves the volume form of $V$, and the singularities of $V/\alpha$ are canonical and not terminal if and only if all the components of the fixed locus have dimension $2n-2$.
\end{proposition}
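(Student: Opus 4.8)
The plan is to reduce the statement to the local Reid--Tai analysis already packaged in Lemma~\ref{local-form} and Proposition~\ref{prop: terminal sing no crepant}, the only genuine computation being that of the age near a fixed component.

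First I would settle the claim that $\alpha$ preserves the volume form. Since $V$ is \hk of dimension $2n$, the $(2n,0)$-form $\omega_V^{\wedge n}$ is a nowhere vanishing section of $K_V=\of_V$, hence (up to scalar) the volume form; as $\alpha^*\omega_V=\omega_V$ we get $\alpha^*(\omega_V^{\wedge n})=\omega_V^{\wedge n}$. Locally this is the statement that, near any connected component $C$ of $\Fix(\alpha)$, the linearization $A$ satisfies ${}^tAJA=J$ — precisely the relation established in the proof of Lemma~\ref{local-form} with $\lambda_\alpha=1$ — so $A\in \mathrm{Sp}(2n,\C)\subseteq SL(2n,\C)$; in particular $\det A=1$, as required to invoke Proposition~\ref{prop: terminal sing no crepant}.

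Next I would compute the age. Fix a connected component $C$ of $\Fix(\alpha)$; by Lemma~\ref{local-form}(1) the spectrum of $A$ is a union of $n$ pairs $(\zeta_p^{a_j},\zeta_p^{p-a_j})$ with $0\le a_j<p$. The eigenspace of $A$ for the eigenvalue $1$ is the tangent space to $C$ at a point of $C$, so if $\dim C=2m$ then exactly $m$ of these pairs are $(1,1)$ (the ones with $a_j=0$) and the remaining $k:=n-m$ pairs satisfy $0<a_j<p$. Therefore
\[
\mathrm{age}_C(\alpha)=\frac1p\sum_{i=1}^{2n}a_i=\frac1p\sum_{j=1}^{k}\bigl(a_j+(p-a_j)\bigr)=k=n-m=\tfrac12\,\mathrm{codim}_V(C),
\]
so $\mathrm{age}_C(\alpha)=1$ exactly when $\dim C=2n-2$. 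Since $p$ is prime, for every $0<\ell<p$ each non-trivial pair of $A$ gives a non-trivial pair $(\zeta_p^{\ell a_j},\zeta_p^{-\ell a_j})$ of $A^\ell$, so the same count yields $\mathrm{age}_C(\alpha^\ell)=k$ for all such $\ell$; in particular every non-trivial power has age $\ge 1$, so by the Reid--Tai criterion the germ $(V/\alpha,\overline{C})$ is canonical for every component $C$, and is terminal precisely when $k\ge 2$.

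Finally I would conclude. A component $C$ with $A=\mathrm{id}$ near it would force $\alpha=\mathrm{id}$, so $k\ge 1$ always; hence $V/\alpha$ is singular exactly along $\bigcup_C\overline{C}$ and all of its singularities are canonical. By Proposition~\ref{prop: terminal sing no crepant} (equivalently, by the age computation above), the singularity along $\overline{C}$ fails to be terminal if and only if $\mathrm{age}_C(\alpha)=1$, i.e.\ $\dim C=2n-2$. Thus every singular point of $V/\alpha$ is a canonical, non-terminal point if and only if every component of $\Fix(\alpha)$ has dimension $2n-2$, which is the assertion. The only real obstacle is the age identity in the third step; once the symplectic (indeed $\mathrm{Sp}(2n,\C)$) nature of $A$ is used, the age becomes half the codimension of the fixed component — in particular a positive integer — and the Reid--Tai criterion can be read off directly, the rest being bookkeeping.
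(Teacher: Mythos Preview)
Your proof is correct and follows essentially the same approach as the paper: reduce to the age computation via Lemma~\ref{local-form} and invoke Proposition~\ref{prop: terminal sing no crepant}. The only differences are cosmetic --- you unify the cases $p=2$ and $p>2$ into the single identity $\mathrm{age}_C(\alpha)=\tfrac{1}{2}\,\mathrm{codim}_V(C)$, whereas the paper treats them separately, and you make explicit the (automatic) check that $\mathrm{age}_C(\alpha^\ell)=k$ for all $0<\ell<p$, which the paper absorbs into its citation of Proposition~\ref{prop: terminal sing no crepant}.
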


\proof
Let $C$ be a connected component of the fixed locus of $\alpha$ and let $(x_1,\dots,x_{2n})$ be local coordinates as in Lemma \ref{local-form}.

We recall that the volume form $\Omega$ of $V$ is a complex multiple of $\omega_V^n$; we can assume that $$\Omega:=k\omega_V^n= dx_1\wedge dx_2\wedge \ldots\wedge dx_{2n}$$
for a certain constant $k\in\C^*$. This implies that the action of $\alpha^*$ on $\Omega$ is the multiplication by the determinant of $A$, so we have 
$$\alpha^*(\Omega)=\det(A)\Omega.$$

Period preserving automorphisms are exactly those whose linearization $A$ belongs to $SL(\C,2n)$. Since $\alpha$ is symplectic, by Lemma \ref{local-form} $\det(A)=1$ and $\alpha^*(\Omega)=\Omega$.

Let us consider a component $C\subset V$ of the fixed locus of $\alpha$; $C$ has codimension greater than $1$ by Lemma \ref{codim-fix}. The quotient $V/\alpha$ is singular in $\pi(C)\subset V/\alpha$, where $\pi:V\ra V/\alpha$ is the quotient map. By Proposition \ref{prop: terminal sing no crepant}, the singularity $\pi(C)$ is canonical but not terminal if and only if the age of $\alpha$ near $C$ is 1, i.e. if $(\sum_{j=1}^{2n}a_j)/p=1$.

We distinguish two cases:

\begin{enumerate}
 \item If $p=2$, denote by $2k$ the multiplicity of $1$ as an eigenvalue; then the age $(\sum_{j=1}^{2n}a_j)/2=\frac{2n-2k}{2}=n-k$ equals $1$ if and only if $k=n-1$. As a consequence, $C$ has codimension two.
 \item If $p>2$, then $\sum_{j=1}^{2n}a_j=\sum_{i=1}^{n}(a_{i}+p-a_{i})=\sum_{i=1}^n k_i p$. Let us consider the possibilities for the $2\times 2$ block diagonal matrix $A_i:=\diag(\zeta_p^{a_{i}},\zeta_p^{p-a_{i}})$: either it is the identity matrix, and in this case $k_i=0$, or it is not, and in this case $k_i=1$. If we require that $\sum_{j=1}^{2n}a_j/p=1$ we are requiring that exactly one $k_i$ is non zero, which implies that $C$ has codimension 2, for each component $C$ in the fixed locus of $\alpha$. 
\end{enumerate}

Vice versa, if every component $C$ of the fixed locus of $\alpha$ has dimension $2n-2$, then there are exactly two eigenvalues $A$ which are not equal to $1$. Since $\alpha$ is symplectic, they are of the form $(\zeta_p^{a_j},\zeta_p^{p-a_j})$ by Lemma \ref{local-form}. This implies that the singularities of $V/\alpha$ are canonical but not terminal.

\endproof

In \cite[Proposition 2.39]{MenetPhD}, the interested reader can find an explicit list of quotients of this kind and more details on the existence of a resolution of singularities of $V/\alpha$.

\begin{theorem}\label{prop: main-thm higher dim}
Let $V$ be a hyperk\"ahler variety of dimension $2n$ and let $\alpha\in \Aut(V)$ be a non symplectic automorphism of prime order $p$ with a non-empty fixed locus. 
Then:
\begin{enumerate}
 \item $\alpha$ preserves the volume form if and only if $p|n$.
 \item The singularities of $V/\alpha$ are canonical and not terminal if and only if $p=n$ and all the components of the fixed locus of $\alpha$ have dimension $p=n$.
\end{enumerate}

In particular, if $V$ is a $2p$-dimensional hyperk\"ahler variety and $\alpha$ is a non symplectic automorphism of order $p$ of $V$ such that all the components of the fixed locus of $\alpha$ have dimension $p$, then the blow up of $V/\alpha$ in its singular locus is a Calabi--Yau $2p$-fold.
\end{theorem}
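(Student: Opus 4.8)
The plan is to study $V/\alpha$ locally near each component $C$ of $\Fix(\alpha)$, using Lemma \ref{codim-fix}, Lemma \ref{local-form} and Proposition \ref{prop: terminal sing no crepant}, and then to deduce the Calabi--Yau statement from (1) and (2). After replacing $\alpha$ by a suitable power --- which changes neither $\langle\alpha\rangle$ nor $\Fix(\alpha)$ --- we may assume $\alpha^*\omega_V=\zeta_p\omega_V$. For (1): write the holomorphic volume form as $\Omega=c\,\omega_V^n$ ($c\in\C^*$); then $\alpha^*\Omega=\zeta_p^n\Omega$, and locally $\alpha^*\Omega=\det(A)\,\Omega$, so $\det A=\zeta_p^n$ and $\alpha$ preserves $\Omega$ if and only if $\zeta_p^n=1$, i.e.\ $p\mid n$.

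For (2) I would show, one component at a time, that $\pi(C)$ is a canonical, non-terminal point of $V/\alpha$ if and only if $n=p$ and $\dim C=p$; assertion (2) then follows by ranging over all components of $\Fix(\alpha)$, which is non-empty. Fix such a $C$, set $t=\dim C$, so $t\le n$ by Lemma \ref{codim-fix}. If $A\notin SL$ near $C$ (equivalently $p\nmid n$ by (1)), then $A^k\notin SL$ for every $k\not\equiv 0\pmod p$, hence $\mathrm{age}(\alpha^k)$ near $C$ is never an integer and in particular never $1$; by the Reid--Tai criterion $\pi(C)$ is then either non-canonical or terminal, so not canonical-non-terminal --- and the alternative $n=p$ cannot occur here, since $n=p$ forces $p\mid n$ and hence $A\in SL$. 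So assume $A\in SL$, i.e.\ $p\mid n$. The eigenvalue $1$ of $A$ occurs with multiplicity $t$; feeding the explicit spectrum from Lemma \ref{local-form} into the definition of the age (each pair $(1,\zeta_p)$ contributes $1$, each pair $(\zeta_p^{a_j},\zeta_p^{p+1-a_j})$ with $a_j\ge 2$ contributes $p+1$, each $\zeta_p^{\frac{p+1}{2}}$ contributes $\tfrac{p+1}{2}$, and there are exactly $t$ pairs of the first type) gives $\mathrm{age}(\alpha)$ near $C$ $=n+\tfrac{n}{p}-t$. By Proposition \ref{prop: terminal sing no crepant}, $\pi(C)$ is canonical and not terminal precisely when this equals $1$, i.e.\ $t=n+\tfrac{n}{p}-1$; since $t\le n$ this forces $\tfrac{n}{p}\le 1$, hence (as $p\mid n$) $n=p$ and then $t=p$. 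Conversely, if $n=p$ and $t=p$ then $A\in SL$ and the formula gives $\mathrm{age}(\alpha)$ near $C$ $=p+1-p=1$, so $\pi(C)$ is canonical and not terminal, which proves the claimed equivalence.

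For the last assertion, $\dim V=2p$ gives $n=p$, so by (2) the hypothesis implies that $V/\alpha$ has canonical non-terminal singularities; moreover $A\in SL$ near every component, so $V/\alpha$ is Gorenstein and $K_{V/\alpha}$ is trivial --- indeed $\Omega$ descends to a nowhere-zero section on the smooth locus, which extends across the singular locus (of codimension $p\ge 2$). The singular locus is the disjoint union of the smooth $p$-folds $\pi(C_i)\cong C_i$, transversally to which $V/\alpha$ looks like the cyclic quotient $\C^p/\mu_p$ with $\mu_p$ acting by scalars; blowing up its vertex gives a smooth variety and the exceptional divisor has discrepancy $\mathrm{age}-1=0$. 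Hence $f\colon Y:=\widetilde{V/\alpha}\to V/\alpha$, the blow-up of the singular locus, is a crepant resolution, $Y$ is smooth compact Kähler of dimension $2p$, and $K_Y=f^*K_{V/\alpha}=\mathcal{O}_Y$. Finally, for a resolution of a variety with quotient singularities one has $h^{i,0}(Y)=\dim H^0(V,\Omega_V^i)^{\langle\alpha\rangle}$; since $V$ is hyperkähler of dimension $2p$, $H^{i,0}(V)=0$ for odd $i$ while $H^{2j,0}(V)=\C\,\omega_V^j$, on which $\alpha$ acts by $\zeta_p^j$, so the $\alpha$-invariants vanish exactly for $0<i<2p$. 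Thus $Y$ is a Calabi--Yau $2p$-fold.

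The main obstacle is the discrete bookkeeping in (2): correctly summing the eigenvalue exponents from Lemma \ref{local-form} to obtain $\mathrm{age}(\alpha)$ near $C$ $=n+\tfrac{n}{p}-t$, and --- for the $A\notin SL$ case --- justifying that a prime-order cyclic quotient outside $SL$ cannot be canonical but not terminal. A more routine but necessary point for the final statement is the crepancy of the blow-up along each $\pi(C_i)$ (the transverse model being $\C^p/\mu_p$) and the identification of $h^{i,0}(Y)$ with the space of $\alpha$-invariant holomorphic forms on $V$.
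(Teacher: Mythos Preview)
Your proof is correct and follows essentially the same route as the paper: both use Lemma~\ref{local-form} to read off $\det A=\zeta_p^n$ for part~(1), both reduce part~(2) to the age-one criterion of Proposition~\ref{prop: terminal sing no crepant}, and both conclude by checking that the blow-up of the singular locus is crepant and that the invariant $(i,0)$-forms vanish for $0<i<2p$.

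There are two minor differences worth noting. First, you package the age computation into the closed formula $\mathrm{age}(\alpha)=n+\tfrac{n}{p}-t$ (with $t=\dim C$), which is cleaner than the paper's case-by-case bookkeeping $\sum(1+k_ip)/p+(n-s)(p+1)/p=n'+\sum k_i+n-s$; the conclusions are of course identical. Second, you treat the case $A\notin SL$ (i.e.\ $p\nmid n$) explicitly via Reid--Tai, observing that then no $\mathrm{age}(\alpha^k)$ is an integer and hence ``canonical but not terminal'' is impossible; the paper instead jumps directly to ``we know that $p\mid n$'' after invoking Proposition~\ref{prop: terminal sing no crepant}, which already carries the hypothesis $A\in SL$. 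Your treatment of this point is more self-contained. For the Calabi--Yau statement, the paper computes $K_{\widetilde{V}/\widetilde{\alpha}}$ via the cyclic-cover formula $K_{\widetilde{V}}=\pi'^*(K_{\widetilde{V}/\widetilde{\alpha}}+(p-1)L)$, whereas you argue crepancy directly from the transverse model $\C^p/\mu_p$ and $\mathrm{discrepancy}=\mathrm{age}-1=0$; both are standard and equivalent.
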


\proof 
Let $C$ be a connected component of the fixed locus of $\alpha$ and let $(x_1,\dots,x_{2n})$ be local coordinates as in Lemma \ref{local-form}.

As in proof of Proposition \ref{prop: symplectic case}, we obtain that

$$\alpha^*(\Omega)=\det(A)\Omega.$$

Period preserving automorphisms are exactly those whose linearization $A$ belongs to $SL(\C,2n)$. Lemma \ref{local-form} implies that  $\alpha^*(\Omega)=\zeta_p^n\Omega$ if $\alpha$ is non symplectic. Since $\zeta_p^n=1$ if and only if $p|n$, we obtain that a non symplectic automorphism of order $p$ of $V$ preserves the volume form if and only if $p|n$.

Let us consider a component $C\subset V$ of the fixed locus of $\alpha$; $C$ has codimension greater than $n-1$ by Lemma \ref{codim-fix}. The quotient $V/\alpha$ is singular in $\pi(C)\subset V/\alpha$, where $\pi:V\ra V/\alpha$ is the quotient map. The singularity $\pi(C)$ is canonical but not terminal if and only if $(\sum_{j=1}^{2n}a_j)/p=1$.

We know that $p|n$, so we can write $n$ as $n=n'p$, $n'\in \N_{>0}$. Moreover we know that there are $s$ pairs of distinct eigenvalues $(\zeta_p^{a_j},\zeta_p^{a_{h_j}})$ of the form $(1,\zeta_p)$ or $(\zeta_p^{a_j},\zeta_p^{p+1-a_j})$ with $a_j>0$ and that $\zeta_p^{\frac{p+1}{2}}$ occurs with multiplicity $2n-2s$ . We can assume without loss of generality that $0\leq a_j<p$ for every $j=1,\ldots, 2s$ and that $a_j<a_{h_j}$. So $a_{j}+a_{h_j}=1+k_jp$ with $k_j=0$ if $a_{j}=0$ and $a_{h_j}=1$, and $k_j>0$ otherwise (i.e. if $a_{j}>0$ and so $a_{h_j}>0$). Hence $(\sum_{j=1}^{2n}a_j)/p=(\sum_{i=1}^{s}(a_{j}+a_{h_j}))/p+(n-s)(p+1)/p=\sum_{i=1}^s( 1+k_ip)/p+(n-s)(p+1)/p=n'+\sum_{i=1}^s k_i+n-s$. Now the condition $(\sum_{j=1}^{2n}a_j)/p=1$ implies that $n'=1$, $s=n$ and $k_i=0$ for each $i=1,\ldots, n$. This implies that $n=p$ and $a_{j}=0$ for every $j=1,\ldots, p$. So the eigenvalues of $A$ are $1$ and $\zeta_p$, both with multiplicity $p$. Thus, locally the fixed locus could be described by $x'_{2i}=0$, $i=1,\
ldots,p$, its codimension is $p$ and its dimension is $p$.

Vice versa, if $\alpha$ is a non symplectic automorphism of a $2p$-dimensional hyperk\"ahler variety $V$ such that the fixed locus of $\alpha$ consists of subvarieties of dimension $p$, then each block $A_i$ of the eigenvalues matrix is $A_i:=\diag (\zeta_p^0,\zeta_p^1)$ and it is immediate to check that $\sum_{j=1}^n a_j/p=1$.

Finally, let us show that there exists a crepant resolution $Y$ of $V/\alpha$. Let us blow up $V$ in the fixed locus of $\alpha$. So we are blowing up $V$ in a disjoint union of $p$-dimensional smooth subvarieties, introducing a (not necessarily connected) exceptional divisor $E$. We denote by $\beta:\widetilde{V}\ra V$ this blow up. The automorphism $\alpha\in \Aut(V)$ induces an automorphism $\widetilde{\alpha}$ on $\widetilde{V}$ which acts as the identity on $E$ and acts freely on $\widetilde{V}-E$. So the quotient $\widetilde{V}/\widetilde{\alpha}$ is a smooth variety, which is isomorphic to the blow up of $V/\alpha$ in its singular locus. It remains to prove that the canonical bundle of $\widetilde{V}/\widetilde{\alpha}$ is trivial: first one computes  $$K_{\widetilde{V}}=\beta^*(K_V)+(p-1)E$$
so that $K_{\widetilde{V}}=(p-1)E$.
Then one observes that the quotient map $\pi':\widetilde{V}\ra \widetilde{V}/\widetilde{\alpha}$ exhibits $\widetilde{V}$ as $p:1$ cyclic cover of $\widetilde{V}/\widetilde{\alpha}$ branched on $B:=\pi'(E)$. Hence there exists a divisor $L\in \Pic(\widetilde{V}/\widetilde{\alpha})$ such that $pL\simeq B$ and $K_{\widetilde{V}}=\pi'^*(K_{\widetilde{V}/\widetilde{\alpha}}+(p-1) L)$. Multiplying both terms by $p$ one obtains  $$pK_{\widetilde{V}}=\pi'^*(pK_{\widetilde{V}/\widetilde{\alpha}})+\pi'^*((p-1) pL).$$ Recalling that $pL\simeq B$ and $\pi'^*(B)=pE$ one has 
$$p(p-1)E=\pi'^*(pK_{\widetilde{V}/\widetilde{\alpha}})+p(p-1)E,$$
which implies that $\pi'^*(pK_{\widetilde{V}/\widetilde{\alpha}})$ is trivial. 
On the other hand $\widetilde{V}/\widetilde{\alpha}$ is the blow up of $V/\alpha$ in its singular locus and $V/\alpha$ has trivial canonical bundle (because it is the quotient of $V$, which has trivial canonical bundle, by a period preserving automorphism). Let us denote by $\beta':\widetilde{V}/\widetilde{\alpha}\ra V/\alpha$ this blow up. We obtain $K_{\widetilde{V}/\widetilde{\alpha}}=\beta'^*(K_{V/\alpha})+hB=hB$ for a certain $h\in \Q$. Since  $\pi'^*(pK_{\widetilde{V}/\widetilde{\alpha}})$ is trivial one has that $\pi'^*(phB)=p^2hE$ is trivial. The divisor $E$ is effective, so $p^2hE=0$ implies $h=0$.   

Moreover, $h^{i,0}(\widetilde{V})=h^{i,0}(V)$ because they are birational invariants; and on the other hand, $h^{i,0}(\widetilde{V}/\widetilde{\alpha})=\dim H^{i,0}(\widetilde{V})^{\widetilde{\alpha}}=0$ for $0<i<2p$, since $\alpha$ does not preserve the symplectic structure of $V$.

But a smooth variety $Y$ of dimension $2p$ with trivial canonical bundle and $h^{i,0}(Y)=0$ for $0<i<2p$ is Calabi--Yau.\endproof

If $V$ is a hyperk\"ahler $2n$-fold and $\alpha$ is a non symplectic involution on $V$, then the components of the fixed locus of $\alpha$ are Lagrangian submanifolds of $V$ and thus in particular they have dimension $n$ \cite[Lemma 1]{BeauvilleInvolutions}. So a non symplectic involution on $V$ preserves the canonical bundle and is such that $V/\alpha$ has canonical singularities if and only if $n=2$. This will be the setting of the rest of the paper.

\begin{rem}{\rm To the best of our knowledge there are no examples of pairs $(V,\alpha_V)$ which satisfy Theorem \ref{prop: main-thm higher dim} such that $\dim V>4$.}\end{rem}

\section{Quotients of hyperk\"ahler 4-folds by non symplectic involutions}\label{sec: quotients of hyperkahler}

From now on, $V$ is a hyperk\"ahler 4-fold and $\alpha$ is a non symplectic involution (so $n=p=2$).
Hence $V/\alpha$ admits a crepant resolution which is a Calabi--Yau 4-fold.
The aim of this Section is to describe an explicit crepant resolution of $V/\alpha$ (Section \ref{subsec: crepant resoltuion}), which allows to construct a Calabi--Yau 4-fold and to compute its Hodge diamond starting from some information on the action of $\alpha$ on $V$ (Theorem \ref{prop: hodge diamonds in general}). Then we apply these results to some specific \hk 4-folds with a non symplectic involution, in particular we will consider quotients of generalized Kummer in Section \ref{subsec: generalized Kummer} and K3 type \hk 4-folds in Section \ref{subsect: HK defomration equiv to Hilb2}.
 
\subsection{The crepant resolution}\label{subsec: crepant resoltuion}
Let us denote by $\pi:V\ra V/\alpha$ the quotient map and by $B_j$ the irreducible components of the fixed locus of $\alpha$. We recall that $B_j$ are smooth surfaces.  In order to construct the crepant resolution $\widetilde{V/\alpha}$ of $V/\alpha$ considered in proof of Theorem \ref{prop: main-thm higher dim} it suffices to blow up once the singular locus of $V/\alpha$, which is given by $\coprod_j \pi(B_j)$. This introduces one divisor for each component $B_j$ of the fixed locus, and this divisor is a $\mathbb{P}^1$-bundle over $B_j$.

There exists the following commutative diagram:
$$
\xymatrix{ V\ar[d]_{\pi}&\widetilde{V}\ar[l]_{\beta}\ar[d]_{\pi'}\\
V/\alpha&\widetilde{V}/\widetilde{\alpha}\ar[l]_{\beta'}},
$$ 
where $\beta$ is the blow up of $V$ in the components of the fixed locus of $\alpha$, $\widetilde{\alpha}$ is the automorphism of $\widetilde{V}$ induced by $\alpha$, $\pi$ and $\pi'$ are the quotient maps and $\beta'$ is the blow up of $V/\alpha$ in its singular locus (equivalently $\beta'$ is the map induced by $\beta$ on the quotient 4-folds). So $\widetilde{V/\alpha}$ is isomorphic to $\widetilde{V}/\widetilde{\alpha}$.

\subsection{The computation of the Hodge numbers}\label{subsec: Computation of the Hodge numbers}
Let $\alpha$ be a non symplectic involution of $V$ and $Y:=\widetilde{V/\alpha}$ the crepant resolution of $V/\alpha$ described in Section \ref{subsec: crepant  resoltuion}.
The 4-fold $Y$ is a Calabi--Yau variety and so its Hodge diamond is invariant under birational transformation (see \cite[Theorem 1.1]{Batyrev-birat-betti} and \cite{Denef-Loeser}). In particular, we can deduce the Hodge diamond of $Y$ from the computation of the $\widetilde{\alpha}$ invariant part of the cohomology of $\widetilde{V}$. Equivalently, one can use the well known formula of the orbifold cohomology. The result, which is not surprising, is that the Hodge diamond of $Y$ depends on the $\alpha$ invariant part of the Hodge decomposition of $V$, and on the Hodge diamond of the fixed locus. After fixing some notation, we summarize the result in Theorem \ref{prop: hodge diamonds in general}.

Let $\coprod B_j$ be the fixed locus of $\alpha$. Let us denote by  $b:=h^0(\coprod B_j)$, i.e. $b$ is the number of components of the fixed locus of $\alpha$, by $c:=\sum_{j=1}^b(h^{1,0}(B_j))$, $d:=\sum_{j=1}^b(h^{2,0}(B_j))$, $e:=\sum_{j=1}^b(h^{1,1}(B_j))$.
Moreover, let us denote by $$t_{1,1}:=\dim(H^2(V,\C))^{\alpha}=\dim (H^{1,1}(V))^{\alpha},$$ where the last equality follows from the fact that $\alpha$ is non symplectic. We set also: $$t_{2,1}:=\dim(H^{2,1}(V)^{\alpha})=\dim(H^3(V,\C)^\alpha),\ \  t_{3,1}:=\dim(H^{3,1}(V)^{\alpha}),\ \ t_{2,2}:=\dim(H^{2,2}(V)^{\alpha}).$$ Since $H^{4,0}(V)$ and $H^{0,4}(V)$ are invariant for $\alpha$, we have the following relation: $\dim(H^4(V,\C)^{\alpha})=2+2t_{3,1}+t_{2,2}$.
\begin{theorem}\label{prop: hodge diamonds in general}
The Hodge diamond of any Calabi--Yau birational to $Y$ is given by 
$$
\begin{array}{ll}
h^{0,0}(Y)=h^{4,0}=1,&
h^{1,0}(Y)=h^{2,0}=h^{3,0}=0,\\
h^{1,1}(Y)=t_{1,1}+b,&
h^{2,1}(Y)=t_{2,1}+c,\\
h^{2,2}(Y)=t_{2,2}+e,&
h^{3,1}(Y)=t_{3,1}+d.\\
\end{array}
$$

\end{theorem}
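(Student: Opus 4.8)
The plan is to compute the Hodge numbers of $Y$ by computing the $\widetilde{\alpha}$-invariant part of the Hodge decomposition of $\widetilde{V}$, exploiting the fact that (by Batyrev's theorem, already invoked) the Hodge diamond is a birational invariant among Calabi--Yau varieties, so it suffices to work with the specific crepant resolution $Y = \widetilde{V}/\widetilde{\alpha}$ constructed in Section~\ref{subsec: crepant resoltuion}. First I would record that $H^{k,l}(Y) \cong H^{k,l}(\widetilde{V})^{\widetilde{\alpha}}$, since $\widetilde{V} \to Y$ is a finite cyclic cover, so cohomology of the quotient is the invariant part. Then the main input is the blow-up formula for $\beta\colon \widetilde{V} \to V$ along the disjoint smooth surfaces $\coprod B_j$: each blow-up of a codimension-$2$ center $B_j$ contributes, in each degree, a copy of the cohomology of $B_j$ shifted by $(1,1)$ (the $\mathbb{P}^1$-bundle contribution), so
$$
H^{k,l}(\widetilde{V}) \cong H^{k,l}(V) \oplus \bigoplus_j H^{k-1,l-1}(B_j).
$$
The second step is to check that this decomposition is $\widetilde{\alpha}$-equivariant and that $\widetilde{\alpha}$ acts \emph{trivially} on the exceptional pieces $\bigoplus_j H^{k-1,l-1}(B_j)$: this follows because $\widetilde{\alpha}$ fixes the exceptional divisor $E$ pointwise (each exceptional $\mathbb{P}^1$-bundle over $B_j$ is in the fixed locus of $\widetilde{\alpha}$, as noted in the proof of Theorem~\ref{prop: main-thm higher dim}), hence acts as the identity on $H^*(E)$ and on the relevant summands. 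Therefore $H^{k,l}(\widetilde{V})^{\widetilde{\alpha}} = H^{k,l}(V)^{\alpha} \oplus \bigoplus_j H^{k-1,l-1}(B_j)$.

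The third step is purely bookkeeping: plug in $k,l$ and use the definitions. For $(k,l)=(1,1)$ the exceptional contribution is $\bigoplus_j H^{0,0}(B_j)$, which has dimension $b$, giving $h^{1,1}(Y) = t_{1,1} + b$. For $(2,1)$ we get $\bigoplus_j H^{1,0}(B_j)$, dimension $c$, so $h^{2,1}(Y) = t_{2,1}+c$. For $(3,1)$ we get $\bigoplus_j H^{2,0}(B_j)$, dimension $d$, so $h^{3,1}(Y) = t_{3,1}+d$. For $(2,2)$ we get $\bigoplus_j H^{1,1}(B_j)$, dimension $e$, so $h^{2,2}(Y) = t_{2,2}+e$. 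Finally $h^{i,0}(Y)=0$ for $0<i<4$ and $h^{0,0}=h^{4,0}=1$ are exactly the Calabi--Yau conditions already established in Theorem~\ref{prop: main-thm higher dim} (the $h^{i,0}$ vanish because $\alpha$ does not preserve the holomorphic $2$-form, so $H^{i,0}(V)^\alpha=0$ for $0<i<4$, and $H^{i-1,i-1-1}(B_j)$-type exceptional contributions never land in the $(i,0)$-row). One should also note the symmetries $h^{k,l}=h^{l,k}$ and Serre duality $h^{k,l}=h^{4-k,4-l}$ hold automatically, and in particular $t_{2,1}=h^{1,2}(V)^\alpha$ etc., which makes the table self-consistent; this is a quick sanity check rather than part of the argument.

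The only genuine subtlety — the step I would be most careful about — is the equivariance and triviality of the $\widetilde{\alpha}$-action on the exceptional cohomology, together with the compatibility of the blow-up decomposition with the Hodge structure (the latter is standard: the blow-up formula is an isomorphism of Hodge structures, with the exceptional summand $H^{k-2}(B_j)$ appearing with a Tate twist, which is why the bidegree shifts by $(1,1)$). An alternative, perhaps cleaner, route is to use the orbifold (Chen--Ruan) cohomology of $V/\alpha$ directly: the untwisted sector gives $H^{k,l}(V)^\alpha$ and the twisted sectors, indexed by the components $B_j$ of the fixed locus with age equal to $1$ (as computed in Theorem~\ref{prop: main-thm higher dim}), each contribute $H^{k-1,l-1}(B_j)$; since $Y$ is a crepant resolution, its Hodge numbers agree with the orbifold ones. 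Both routes reduce to the same combinatorial identity and both require only results already available in the excerpt, so I would present whichever is more self-contained — likely the explicit blow-up computation, since the commutative diagram and the description of $\widetilde{\alpha}$ are already in hand.
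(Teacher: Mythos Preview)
Your proposal is correct and follows essentially the same approach as the paper: identify $Y$ with $\widetilde{V}/\widetilde{\alpha}$, invoke Batyrev for birational invariance, apply the blow-up formula $h^{p,q}(\widetilde{V})=h^{p,q}(V)+h^{p-1,q-1}(\coprod B_j)$, and use that $\widetilde{\alpha}$ acts trivially on the exceptional summand. Your justification for the last point (that $\widetilde{\alpha}$ fixes $E$ pointwise, as already noted in the proof of Theorem~\ref{prop: main-thm higher dim}) is in fact slightly more precise than the paper's phrasing, and the orbifold-cohomology alternative you mention is exactly the one the paper alludes to just before the statement.
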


\proof
The crepant resolution $Y$ of $V/\alpha$ is isomorphic to $\widetilde{V}/\widetilde{\alpha}$ by Section \ref{subsec: crepant resoltuion} and any over Calabi--Yau birational to $Y$ has the same Hodge numbers as $Y$, by \cite[Theorem 1.1]{Batyrev-birat-betti}. The statement now thus follows from the fact that $H^{*,*}(Y)=H^{*,*}(\widetilde{V})^{\widetilde{\alpha}}$. Indeed by standard results on the cohomology of blow ups (see \cite[Theorem 7.31]{Vo}, $h^{p,q}(\widetilde{V})=h^{p,q}(V)+h^{p-1,q-1}(\coprod B_j)$. Since the exceptional divisors of the blow up $\beta:\widetilde{V}\ra V$ are preserved by $\widetilde{\alpha}$, we obtain $\dim\left( H^{p,q}(\widetilde{V})^{\widetilde{\alpha}}\right)=\dim\left( H^{p,q}(V)^\alpha\right)+h^{p-1,q-1}(\coprod B_j)$. 
\endproof
\subsection{Computations in special cases}
The aim of this section is to apply Theorem \ref{prop: hodge diamonds in general} to special hyperk\"ahler 4-folds $V$ with a given non symplectic involution $\alpha$ such that either there exist some relations among the numbers $t_{i,j}$, $b$, $c$, $d$, $e$, or some of these numbers are determined.

\subsubsection{Generalized Kummer four-folds}\label{subsec: generalized Kummer}
Non symplectic involutions on generalised Kummer fourfolds have been recently classified in \cite{mtwkum}. The cohomology of generalised Kummer fourfolds has been studied in detail by Hassett and Tschinkel \cite{HT_kum} and Oguiso \cite{Oguiso-Kummers}, let us review the results relevant for our purposes.\\ 
The Hodge diamond of the generalized Kummer fourfolds is as follows:

\begin{eqnarray*}\begin{array}{lllllll}
1&&&&\\
&0&&&\\
5&&1&&\\
&4&&0&\\
96&& 5 &&1.\\
\end{array}\end{eqnarray*}

Here, the third cohomology has trivial $H^{3,0}$ and four dimensional $H^{2,1}$ which, in the following special setting, can be constructed explicitly. Let $A$ be an abelian surface and let $\mathbb{C}^2$ be its universal cover, with coordinates $z$ and $w$. Let $A(2)$ be the subset of $A^3$ where all points sum to zero. The universal cover of $A(2)$ is the closed submanifold of $(\mathbb{C}^2)^3$ cut out by the following equations:
\begin{eqnarray*}
z_1+z_2+z_3& = & 0,\\
w_1+w_2+w_3& = & 0.
\end{eqnarray*}
The natural one forms $dz_i$, $dw_i$ on $A(2)$ satisfy the same equations and $dz_1,dz_2,dw_1,dw_2$ form a basis of one forms for $A(2)$. We have the following lemma 
\begin{lemma}\cite[Lem. 3.3]{Oguiso-Kummers}
All $\mathfrak{S}_3$ invariant $(2,1)$-forms of $A(2)$ give nonzero forms of $K_2(A)$.
\end{lemma}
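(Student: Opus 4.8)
The plan is to exhibit the four $\mathfrak{S}_3$-invariant $(2,1)$-forms explicitly on the universal cover of $A(2)$, check that they descend to $A(2)$ and then to $K_2(A)$, and finally show that the resulting cohomology classes on $K_2(A)$ are nonzero. First I would write down the space of $(2,1)$-forms on $A(2)$ using the basis $dz_1,dz_2,dw_1,dw_2$ of holomorphic one-forms together with their conjugates: a $(2,1)$-form is a combination of $dz_i\wedge dw_j \wedge d\bar z_k$ and $dz_i\wedge dw_j\wedge d\bar w_k$ and $dz_i\wedge dz_j\wedge d\bar z_k$ etc., and one computes which of these are invariant under the $\mathfrak{S}_3$-action that simultaneously permutes the three indices (subject to $z_1+z_2+z_3 = 0$, $w_1+w_2+w_3 = 0$, so $d\bar z_3 = -d\bar z_1 - d\bar z_2$ and similarly for $w$). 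The symmetrization of the natural translation-invariant forms yields a $4$-dimensional space, matching $h^{2,1}(K_2(A)) = 4$ from the Hodge diamond recalled above; this is the bookkeeping part.

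The key geometric point is the descent and non-vanishing. Since the forms in question are translation-invariant on $\mathbb{C}^2$ and built symmetrically from the $dz_i, dw_i, d\bar z_i, d\bar w_i$, they are invariant under the covering group (the lattice acting by translations) and hence descend to $A(2) \subset A^3$; being $\mathfrak{S}_3$-invariant they then descend to the symmetric-type quotient, and because $K_2(A)$ is a resolution of (a connected component of) that quotient, one pulls them back to get holomorphic $(2,1)$-forms, i.e. classes in $H^{2,1}(K_2(A))$. To see these classes are nonzero it suffices to restrict to the locus where $K_2(A) \to A(2)/\mathfrak{S}_3$ is an isomorphism — the complement of the image of the exceptional divisor of the resolution — which is a dense Zariski-open subset; a holomorphic form that is nonzero on a dense open subset of a smooth variety represents a nonzero class in Dolbeault cohomology, since $H^{2,1}$ injects into forms and a globally defined holomorphic $(2,1)$-form is harmonic, so it cannot vanish identically unless it is zero pointwise on a dense set. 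Alternatively, one can pair the class with a suitable $2$-cycle (a product of real $2$-tori in the image of $A(2)$) and compute a nonzero period integral directly.

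The main obstacle I expect is the non-vanishing step rather than the invariance count: one must make sure that symmetrization does not kill the forms and that distinct invariant forms on $A(2)$ remain linearly independent after pulling back to $K_2(A)$ — equivalently that the map $H^{2,1}(A(2)/\mathfrak{S}_3) \to H^{2,1}(K_2(A))$ is injective on this $4$-dimensional space. This follows from the general principle that for a resolution $\widetilde{W} \to W$ of a variety with quotient singularities, the pullback on holomorphic $p$-forms (equivalently on $H^{p,0}$ and, via orbifold cohomology, on the relevant pieces of $H^{p,q}$) is injective, together with the dimension count $h^{2,1}(K_2(A)) = 4$ which forces the map to be an isomorphism once injectivity is known. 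I would invoke the orbifold-cohomology / birational-invariance results already used in Theorem \ref{prop: hodge diamonds in general} to identify $H^{2,1}(K_2(A))$ with the $\mathfrak{S}_3$-invariants of $H^{2,1}$ of (a crepant resolution of) $A(2)$, after which the explicit forms account for all of it.
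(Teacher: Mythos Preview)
Your overall strategy --- descend invariant forms to the symmetric quotient and pull back along the resolution $K_2(A)\to A(2)/\mathfrak{S}_3$ --- is exactly the paper's approach, and your final paragraph (injectivity of $\nu^*$ on cohomology for a resolution of a variety with quotient singularities, together with $H^{2,1}(A(2))^{\mathfrak{S}_3}\cong H^{2,1}$ of the quotient) is precisely the argument the paper gives: the quotient is a $V$-manifold, so its Hodge structure is pure, and the pullback $\nu^*$ on rational cohomology is injective and compatible with the Hodge decomposition, hence injective on $H^{2,1}$.

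That said, your \emph{first} non-vanishing argument contains a genuine gap. A $(2,1)$-form is not ``holomorphic'' in any standard sense, and there is no general principle that ``$H^{2,1}$ injects into forms'' or that a $(2,1)$-form which is nonzero on a dense open set represents a nonzero cohomology class. That reasoning is valid for $(p,0)$-forms, where $H^{p,0}$ literally is a space of global holomorphic forms, but it fails for mixed type: the pullback to $K_2(A)$ of a translation-invariant $(2,1)$-form from $A(2)$ is a closed smooth form, but with respect to an arbitrary K\"ahler metric on $K_2(A)$ it has no reason to be harmonic, so being nonzero as a form does not force its class in $H^{2,1}$ to be nonzero. Your period-integral alternative would work, but once you have the injectivity of $\nu^*$ on cohomology (your last paragraph) the argument is complete and none of the form-level reasoning is needed.

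One further remark: the explicit enumeration of the four invariant forms and the dimension count $h^{2,1}(K_2(A))=4$ are not needed to prove the lemma as stated --- the lemma is purely an injectivity statement. The paper proves injectivity directly and only afterwards lists the basis.
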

\begin{proof}
Let $\tau$ be a $\mathfrak{S}_3$ invariant $(2,1)$-form. It then descends to a $(2,1)$ form $\overline{\tau}$ of $(A(2))^{(3)}$. Let us consider its resolution $\nu\,:\,K_2(A)\rightarrow (A(2))^{(3)}$. As the Hodge structure of $(A(2))^{(3)}$ is pure, we have a well defined map $\nu^*$ of $(2,1)$ forms. This map is furthermore injective, as is the quotient map $A(2)\rightarrow (A(2))^{(3)}$ on symmetric $(2,1)$ forms, therefore our claim holds.
\end{proof}
A basis for this cohomology is given by averaging forms in the orbit of the four following elements:
\begin{eqnarray*}
dz_1\wedge dw_1\wedge d\overline{z}_1,\ \ 
dz_1\wedge dw_2\wedge d\overline{z}_3,\ \ 
dw_1\wedge dz_1\wedge d\overline{w}_1 \mbox{ and } 
dw_1\wedge dz_2\wedge d\overline{w}_3.
\end{eqnarray*}
This will allow us to compute $t_{2,1}$ on natural automorphism only from the action of the automorphism on $H^1(A)$.
The last tool we need is a determination of $H^4(K_2(A))/\Sym^2H^2(K_2(A))$. This has the following geometric characterisation, where we denote by $A[3]$ the group of the 3-torsion points of $A$:
\begin{lemma}\cite[Sect. 4]{HT_kum}
There is an injective map $A[3]\otimes \mathbb{C}\hookrightarrow H^4(K_2(A))$. Let $W=\sum_{p\in A[3]}p\otimes 1$. Then $W^\perp\subset A[3]\otimes \mathbb{C}$ surjects on $H^4(K_2(A))/\Sym^2H^2(K_2(A))$.
\end{lemma}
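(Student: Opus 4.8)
\emph{Proof proposal.} The plan is to read off both sides of the statement from the Hilbert--Chow morphism. Let $\mu\colon A^{[3]}\to A^{(3)}$ be the Hilbert--Chow contraction and $\Sigma\colon A^{(3)}\to A$ the summation map, so that $Q:=\Sigma^{-1}(0)$ is (isomorphic to) $A^2/\mathfrak{S}_3$ and $\mu$ restricts to a resolution $\mu'\colon K_2(A)\to Q$. Since $\mu$ is semismall, by the decomposition theorem for the Hilbert--Chow morphism (G\"ottsche--Soergel, de Cataldo--Migliorini) $R\mu_*\mathbb{Q}$ splits as a direct sum of shifted intersection complexes supported on the closed strata $\overline{\Delta}_\nu\subset A^{(3)}$, one for each partition $\nu$ of $3$: the fibre over a point of the open stratum $\Delta_\nu$ is $\prod_i\mathrm{Hilb}^{\nu_i}_0(\mathbb{C}^2)$, of dimension $3-\ell(\nu)$, and (the coefficient system being trivial in all three cases $\nu=(3),(2,1),(1,1,1)$) the $\nu$-summand contributes a copy of $IH^{k-2(3-\ell(\nu))}(\overline{\Delta}_\nu)$ to $H^k(A^{[3]})$. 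Restricting this decomposition along $Q\hookrightarrow A^{(3)}$ yields a direct sum decomposition $H^k(K_2(A))=\bigoplus_\nu IH^{\,k-2(3-\ell(\nu))}(\overline{\Delta}_\nu\cap Q)$.

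Specialise to $k=4$. The partition $(1,1,1)$ contributes $IH^4(Q)=H^4(A^2)^{\mathfrak{S}_3}$, which an elementary Schur-functor computation shows has dimension $21$; the partition $(2,1)$ has $\overline{\Delta}_{(2,1)}\cap Q\cong\{2[a]+[-2a]\}\cong A$ and, after the shift by $2(3-2)=2$, contributes $H^2(A)$, of dimension $6$; and the partition $(3)$ has $\overline{\Delta}_{(3)}\cap Q=\{3[p]\colon 3p=0\}=A[3]$, a set of $81$ reduced points, and after the shift by $2(3-1)=4$ contributes $H^0(A[3])=\mathbb{C}[A[3]]=A[3]\otimes\mathbb{C}$. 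Being a direct summand, $A[3]\otimes\mathbb{C}$ injects into $H^4(K_2(A))$ --- this is the first assertion (and, incidentally, it recovers $b_4(K_2(A))=21+6+81=108$, compatibly with the Hodge diamond above). Now I would track the natural automorphisms $t_b^{[3]}$, $b\in A[3]$, induced by translation by $b$: on the stratification of $A^{(3)}$ they are induced by translation, which permutes the $81$ points of $\overline{\Delta}_{(3)}\cap Q$ simply transitively and fixes the other two strata setwise with trivial action on their cohomology (translations act trivially on $H^*(A)$, hence on $H^*$ of its symmetric products). So $A[3]$ acts on the summand $\mathbb{C}[A[3]]$ by the regular representation and trivially on the other two summands.

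Finally I would compare with $\Sym^2H^2$. Translations act trivially on $H^2(K_2(A))=H^2(A)\oplus\mathbb{Q}\delta$, hence on $\Sym^2H^2(K_2(A))$, so the image of the cup product $\Sym^2H^2(K_2(A))\to H^4(K_2(A))$ lies in the trivial isotypic part $H^4(K_2(A))^{A[3]}$. By Verbitsky's theorem this cup product is injective (we are in degree $k=n=2$), so its image has dimension $\binom{7+1}{2}=28$; on the other hand, by the previous paragraph $H^4(K_2(A))^{A[3]}$ is the sum of $IH^4(Q)$ (dimension $21$), the $(2,1)$-summand (dimension $6$), and the trivial line $\mathbb{C}W\subset\mathbb{C}[A[3]]$, where $W=\sum_{p}p$ --- also $28$. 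Hence the image of $\Sym^2H^2(K_2(A))$ equals $H^4(K_2(A))^{A[3]}$, so the projection $H^4(K_2(A))\to H^4(K_2(A))/\Sym^2H^2(K_2(A))$ kills exactly the trivial isotypic part and identifies the quotient with the sum of the nontrivial isotypic components of $H^4(K_2(A))$, which inside $\mathbb{C}[A[3]]=A[3]\otimes\mathbb{C}$ is precisely the $80$-dimensional augmentation ideal $W^\perp$. In particular $W^\perp$ surjects onto $H^4(K_2(A))/\Sym^2H^2(K_2(A))$, as claimed.

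The main obstacle is the decomposition-theorem bookkeeping: one must check that Hilbert--Chow remains semismall after restriction to the singular fibre $Q$ with the expected stratification, that the coefficient systems attached to the three partitions of $3$ are trivial, and that the shifts place the $(3)$-summand exactly in degree $4$ --- together with the (routine but not entirely trivial) $\mathfrak{S}_3$-representation computation of $IH^4(A^2/\mathfrak{S}_3)$. A variant avoiding intersection cohomology uses the \'etale $A[3]$-cover $A\times K_2(A)\to A^{[3]}$ by translations, which gives $H^*(A^{[3]})\cong H^*(A)\otimes H^*(K_2(A))^{A[3]}$ and, combined with the known Betti numbers of $A^{[3]}$ and $K_2(A)$, pins down $\dim H^4(K_2(A))^{A[3]}=28$; but the geometric identification of the nontrivial part with $\mathbb{C}[A[3]]$ still runs through the $81$ triple points, so the essential point is unchanged.
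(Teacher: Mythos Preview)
Your argument is correct. Note, however, that the paper does not actually prove this lemma: it explicitly writes ``We omit this proof'' and offers only a one-sentence geometric sketch --- the classes $p\otimes 1$ are the cohomology classes of the surfaces $W_p\subset K_2(A)$ parametrizing subschemes supported entirely at the $3$-torsion point $p$, and the sum $W=\sum_p[W_p]$ lies in the span of (the square of) the exceptional divisor class and $c_2(K_2(A))$, hence in $\Sym^2H^2$.

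Your route is genuinely different from (and more complete than) this sketch. The paper's hint establishes the injection by exhibiting $81$ disjoint surfaces, and locates $W$ inside $\Sym^2H^2$ by an explicit geometric relation; it says nothing about the surjectivity of $W^\perp\to H^4/\Sym^2H^2$, which presumably comes from a dimension count carried out in \cite{HT_kum}. You instead organise everything through the de Cataldo--Migliorini decomposition for the (semismall) restriction of Hilbert--Chow to $K_2(A)\to Q$, then use the $A[3]$-translation action to split $H^4$ into isotypic pieces: the regular representation on $\mathbb{C}[A[3]]$ and the trivial representation on the remaining $21+6$ dimensions. Combining Verbitsky's injectivity of $\Sym^2H^2\hookrightarrow H^4$ with the equality $\dim\Sym^2H^2=28=\dim H^4(K_2(A))^{A[3]}$ forces the image of $\Sym^2H^2$ to be exactly the trivial isotypic part, so the quotient is identified with the augmentation ideal $W^\perp$. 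This buys you both halves of the lemma at once, and in fact upgrades ``surjects'' to ``is isomorphic to'', without ever writing down the specific relation $W\in\langle\delta^2,c_2\rangle$.

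The bookkeeping you flag (semismallness after restriction, triviality of the three local systems, the $\mathfrak{S}_3$-invariant count $\dim IH^4(Q)=21$) is routine: all three strata of $Q$ are relevant with trivial coefficient systems, and the numerics are confirmed by $21+6+81=108=b_4(K_2(A))$. Your alternative via the \'etale $A[3]$-cover $A\times K_2(A)\to A^{[3]}$ would also work and avoids intersection cohomology, as you note.
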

We omit this proof, however the embedding is geometric, as the classes $p\otimes 1$ are given by the surfaces of subschemes of $A$ whose support is the order three point $p$, and the sum of these classes lies in the subspace generated by the exceptional divisor and the second Chern class. 

As it is classically known (\cite{FUJ}), there are three families of non symplectic involutions on abelian surfaces. The first family is given in terms of the product $E\times E'$ of two elliptic curves and the involution acts as $-1$ on one curve and as $1$ on the other. The elements of the second family are quotients of elements of the first family by an order two point $t$ such that both projections of $t$ on the elliptic curves $E$ and $E'$ are nontrivial. The third family is an iteration of this procedure with a further quotient by a point of order 2. Let $X_1=K_2(E\times E')$, $X_2=K_2(E\times E'/t)$ and $X_3=K_2(E\times E'/\langle t,t'\rangle)$ and let $\varphi_1,\varphi_2$ and $\varphi_3$ be the three involutions induced on them. In all three cases, we have $t_{1,1}=3$ and $t_{2,1}=2$. The three actions give also the same action on order three points of $A$, with $36$ non trivial orbits and $9$ fixed points. Therefore we have $t_{2,2}=54$, where 10 elements come from the symmetric power of $H^2$ and 
45 arise from the action on order three points (we remind that there is a one dimensional intersection). The fixed locus on $X_1$ is given by points of the form $\{(p,e);(q,f);(p+q,-e-f)\}$, where $p,q\in E[2]$ and $e,f\in E'$ or points of the form $\{(0,-2e),(a,e),(-a,e)\}$, where $a\in E$ and $e\in E'$. These cover six surfaces, one is a $\mathbb{P}^2$ (the fibre of the Albanese map for $(E')^{(3)}$, given by fixed subschemes of the form $\{(0,e),(0,f),(0,-e-f)\}_{e,f\in E'}$), three are $(E')^{(2)}$ (which are $\{(0,e),(p,f),(p,-e-f)\}_{e,f\in E',\,p\in E[2]-0}$), one is $E'\times E'$ (given as $\{(p,e),(q,f),(p+q,-e-f)\}_{e,f\in E',\,\{0,p,q,p+q\}=E[2]}$) and the last is given by the invariant subschemes whose points are not fixed, and this is isomorphic to $E\times\mathbb{P}^1$ blown up along nine points, which are subschemes supported entirely on a triple point of $A$. Therefore we get the following Hodge diamond for a crepant resolution $Y_1$ of $X_1/\varphi_1$: 

\begin{eqnarray*}\begin{array}{lllllll}
1&&&&\\
&0&&&\\
9&&0&&\\
&8&&0&\\
75&& 5 &&1.\\
\end{array}\end{eqnarray*}

In case $X_2=K_2(E\times E'/t)$, the quotient of $E\times E'/t$ of $E\times E'$ identifies some of the components of the fixed locus, and we are left with only three surfaces. The first one is again $\mathbb{P}^2$. The second one is $(E')^{(2)}$ and the last one is the surface of invariant non fixed subschemes, isomorphic to that of the previous case. Therefore the Hodge diamond of a crepant resolution $Y_2$ of $X_2/\varphi_2$ is as follows:

\begin{eqnarray*}\begin{array}{lllllll}
1&&&&\\
&0&&&\\
6&&0&&\\
&4&&0&\\
68&& 4 &&1.\\
\end{array}\end{eqnarray*}
Also on $X_3$ the fixed locus gets smaller, and only $\mathbb{P}^2$ and the surface of invariant non fixed subschemes are left. Thus, the Hodge diamond of a crepant resolution $Y_3$ of $X_3/\varphi_3$ is as follows:
\begin{eqnarray*}\begin{array}{lllllll}
1&&&&\\
&0&&&\\
5&&0&&\\
&3&&0&\\
66&& 4 &&1.\\
\end{array}\end{eqnarray*}

\subsubsection{Four-folds of $K3$ type}\label{subsect: HK defomration equiv to Hilb2}

We want to apply the results of Theorem \ref{prop: hodge diamonds in general} if $V$ is deformation equivalent to the Hilbert scheme of two points of a K3 surface. In this case the Hodge diamond of $V$ is known to be: 
\begin{eqnarray*}\label{Hodge X}\begin{array}{lllllll}
1&&&&\\
&0&&&\\
21&&1&&\\
&0&&0&\\
232&&21&&1.\\
\end{array}\end{eqnarray*}
One of the most relevant properties about the cohomology of $V$ if $V$ is of $K3$ type, is that the cohomology group $H^3(V,\C)$ is trivial and the cohomology group $H^4(V,\C)$ is completely determined by $H^2(V,\C)$. In particular, this implies that $t_{2,1}=0$ and that we can get $t_{3,1}$ and $t_{2,2}$ from $t_{1,1}$.
Indeed, there is an isomorphism of Hodge structures $H^4(V,\C)\cong \Sym^2(H^2(V,\C))$ (see \cite{Guan}). Let us denote by $H^{1,1}(V,\C)_{-1}$ the subgroup of $H^{1,1}(V,\C)$ which is anti invariant for $\alpha$. Then
$$t_{3,1}=\dim(H^{3,1}(V,\C)^{\alpha})= 
\dim(H^{2,0}(V)\otimes H^{1,1}(V)_{-1})= h^{1,1}-t_{1,1} =21-t_{1,1}$$
and 
\begin{eqnarray*}\begin{array}{l}t_{2,2}=\dim (H^{2,2}(V)^{\alpha})=
\dim \left(H^{2,0}(V)\otimes H^{0,2}(V)\oplus \Sym^2\left(H^{1,1}(V)\right)\right)^{\alpha}=\\
\dim \left(H^{2,0}(V)_{-1}\otimes H^{0,2}(V)_{-1}\oplus\Sym^2(H^{1,1}(V)^{\alpha})\oplus\Sym^2(H^{1,1}(V)_{-1})\right)=\\
1+\binom{t_{1,1}+1}{2}+\binom{22-t_{1,1}}{2}=232+t_{1,1}^2-21t_{1,1}.\end{array}\end{eqnarray*}

By considering the invariant and the anti invariant parts of the cohomology of $V$, we obtain two sub-Hodge structures of weight $k$ of $H^k(V,\Q)$.
In particular, we obtain the following two invariant and anti invariant Hodge diamonds:
\begin{align*}\label{Hodge inv anti inv}
\begin{array}{l|r}
\begin{array}{llllll}
1&&&&\\
&0&&&\\
t_{1,1}&&0&&\\
&0&&0&\\
232+t_{1,1}^2-21t_{1,1}&&21-t_{1,1}&&1\\
\end{array}&
\begin{array}{lllll}
0&&&&\\
&0&&&\\
21-t_{1,1}&&1&&\\
&0&&0&\\
-t^2_{1,1}+21 t_{1,1}&&t_{1,1}&&0
\end{array}\end{array}
\end{align*}

This allows an easy computation of the trace of $\alpha$ on $H^*(V,\C)$ and, by means of the topological trace formula, Beauville in \cite{BeauvilleInvolutions} deduces information on the Hodge diamond of the fixed locus of $\alpha$ (in \cite{BeauvilleInvolutions} the trace of $\alpha$ on $H^{1,1}(V)$ is denoted by $t$). The relation between $t$ and $t_{1,1}$ is clearly $t=t_{1,1}-(21-t_{1,1})$, so $t=2t_{1,1}-21$.
By \cite[Theorem 2]{BeauvilleInvolutions}, once denoted by $F=\coprod B_j$ the fixed locus of $\alpha$, we have $$\chi(F)=(t_{1,1}^2-21t_{1,1}+112)/2,\ \ e(F)=2t_{1,1}^2-42t_{1,1}+232.$$
In particular, with the notations of Theorem \ref{prop: hodge diamonds in general}, we have 
\begin{eqnarray*}\begin{array}{l}b-c+d=(t_{1,1}^2-21t_{1,1}+112)/2,\\
2b-4c+2d+e=2t_{1,1}^2-42t_{1,1}+232,\end{array}\end{eqnarray*}
from which it follows
\begin{eqnarray*}\begin{array}{l}b=(112-21t_{1,1}+2c-2d+t_{1,1}^2)/2,\\
e=120-21t_{1,1}+2c+t_{1,1}^2.\end{array}\end{eqnarray*}

\begin{prop}\label{prop: Hodge diamond Y for K3 type, also non natural}
Let $V$ be a \hk 4-fold of K3 type admitting a non symplectic involution $\alpha$ such that the fixed locus of $\alpha$ consists of the disjoint union of a finite number of surfaces $B_j$. As before, denote by: $$t_{1,1}:=\dim(H^{1,1}(V)^{\alpha}),\ c:=\sum_j(H^{1,0}(B_j)),\ d:=\sum_j(H^{2,0}(B_j)),$$ any crepant resolution of $V/\alpha$ is a Calabi--Yau variety with Hodge numbers: 
$$
\begin{array}{ll}
h^{0,0}=h^{4,0}=1,&
h^{1,0}=h^{2,0}=h^{3,0}=0,\\
h^{1,1}=(112-19t_{1,1}+2c-2d+t_{1,1}^2)/2,&
h^{2,1}=c,\\
h^{2,2}=352+2t_{1,1}^2-42t_{1,1}+2c,&
h^{3,1}=21-t_{1,1}+d.\\
\end{array}
$$
\end{prop}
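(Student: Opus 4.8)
The plan is to reduce the statement to Theorem \ref{prop: hodge diamonds in general} together with the structural facts about $K3$-type fourfolds recorded above, so that the proof becomes a substitution of quantities already computed. Applying Theorem \ref{prop: hodge diamonds in general} to $Y$, the four possibly nonzero off-diagonal Hodge numbers of any Calabi--Yau birational to $Y$ are $h^{1,1}=t_{1,1}+b$, $h^{2,1}=t_{2,1}+c$, $h^{2,2}=t_{2,2}+e$ and $h^{3,1}=t_{3,1}+d$, so it suffices to express $t_{2,1}$, $t_{3,1}$, $t_{2,2}$, $b$ and $e$ in terms of $t_{1,1}$, $c$ and $d$.

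First I would dispose of the $t_{i,j}$. Since $V$ is of $K3$ type, $H^3(V,\mathbb{C})=0$, hence $t_{2,1}=0$. For $t_{3,1}$ and $t_{2,2}$ I would use the isomorphism of Hodge structures $H^4(V,\mathbb{C})\cong\Sym^2 H^2(V,\mathbb{C})$ valid for fourfolds of $K3$ type. Decomposing $H^2(V,\mathbb{C})=\mathbb{C}\omega_V\oplus H^{1,1}(V)^{\alpha}\oplus H^{1,1}(V)_{-1}\oplus\mathbb{C}\overline{\omega}_V$, where $\alpha$ acts by $-1$ on every summand except $H^{1,1}(V)^{\alpha}$ (of dimension $t_{1,1}$, while $\dim H^{1,1}(V)_{-1}=21-t_{1,1}$), and reading off the $\alpha$-invariant part of $\Sym^2$ in bidegrees $(3,1)$ and $(2,2)$, one gets $t_{3,1}=\dim\bigl(H^{2,0}(V)\otimes H^{1,1}(V)_{-1}\bigr)=21-t_{1,1}$ and $t_{2,2}=1+\binom{t_{1,1}+1}{2}+\binom{22-t_{1,1}}{2}=232+t_{1,1}^2-21t_{1,1}$. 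The only point to watch here is the sign bookkeeping in $\Sym^2 H^2(V)$: the cross term $H^{1,1}(V)^{\alpha}\otimes H^{1,1}(V)_{-1}$ is $\alpha$-anti-invariant and must be discarded, whereas $\omega_V\otimes\overline{\omega}_V$ and the symmetric squares of the two $(1,1)$-eigenspaces are invariant (and there is no correction term, since $H^4\cong\Sym^2 H^2$ exactly in the $K3$-type case, in contrast with the generalised Kummer situation).

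Next I would pin down $b$ and $e$ from Beauville's topological trace computation. By \cite[Theorem 2]{BeauvilleInvolutions}, with $t=2t_{1,1}-21$ the trace of $\alpha$ on $H^{1,1}(V)$, the fixed locus $F=\coprod_j B_j$ satisfies $\chi(\mathcal{O}_F)=(t_{1,1}^2-21t_{1,1}+112)/2$ and $e(F)=2t_{1,1}^2-42t_{1,1}+232$. Since each $B_j$ is a smooth compact K\"ahler surface, $\chi(\mathcal{O}_{B_j})=1-h^{1,0}(B_j)+h^{2,0}(B_j)$ and $e(B_j)=2-4h^{1,0}(B_j)+2h^{2,0}(B_j)+h^{1,1}(B_j)$; summing over $j$ gives the two linear relations $b-c+d=\chi(\mathcal{O}_F)$ and $2b-4c+2d+e=e(F)$, which I would solve for $b=(112-21t_{1,1}+2c-2d+t_{1,1}^2)/2$ and $e=120-21t_{1,1}+2c+t_{1,1}^2$.

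Finally I would substitute $t_{2,1}=0$, $t_{3,1}=21-t_{1,1}$, $t_{2,2}=232+t_{1,1}^2-21t_{1,1}$, and the formulas for $b$ and $e$ into the expressions of Theorem \ref{prop: hodge diamonds in general}, obtaining the stated Hodge numbers after an elementary simplification (for instance $h^{1,1}=t_{1,1}+b=(112-19t_{1,1}+2c-2d+t_{1,1}^2)/2$ and $h^{2,2}=t_{2,2}+e=352+2t_{1,1}^2-42t_{1,1}+2c$). There is no real obstacle beyond this bookkeeping: everything substantive---the crepant resolution and the blow-up/orbifold cohomology input behind Theorem \ref{prop: hodge diamonds in general}, the Hodge-structure isomorphism $H^4(V)\cong\Sym^2 H^2(V)$ for $K3$-type fourfolds, and Beauville's Lefschetz-type trace formula---is already in hand, so the remaining subtlety is just the eigenvalue accounting in $\Sym^2 H^2(V)$.
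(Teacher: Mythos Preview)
Your proposal is correct and follows essentially the same approach as the paper: the paper places the computation of $t_{2,1}=0$, $t_{3,1}=21-t_{1,1}$, $t_{2,2}=232+t_{1,1}^2-21t_{1,1}$ (via $H^4(V)\cong\Sym^2 H^2(V)$) and of $b$, $e$ (via Beauville's trace formula) in the discussion immediately preceding the proposition, and the proposition itself is just the substitution of these values into Theorem~\ref{prop: hodge diamonds in general}. Your bookkeeping and the intermediate formulas match the paper's line for line.
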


\subsubsection{Beauville's non natural involution}\label{subsec Beauville non natural}

Let $H$ be a double EPW sextic and $\iota_H$ be the cover involution, see \cite{ogr06} for the definition. It is well known that $H$ is deformation equivalent to $S^{[2]}$, where $S$ is a quartic in $\mathbb{P}^3$. In the most generic case (for example if $\Pic(S)=\Z A$, where $A$ is a polarization of degree 4), the fixed locus of $\iota_H$ on $H$ consists of a smooth surface parametrizing the bitangents to $S$ (see \cite{Ferretti}) and $\iota_H$ fixes only one class in $H^{1,1}(H)$, so $t_{1,1}=1$.

Let $X'$ be the double cover of $\mathbb{P}^3$ branched along a generic quartic $W\subset\mathbb{P}^3$. Let $F'$ be the set of the conics on $X'$,  and $F_0'$ be the parametrizing space of the bitangents to $W$. Both $F'$ and $F_0'$ are surfaces and there exists a non ramified double cover $F'\ra F_0'$. In \cite[p. 533]{Logachev} it is proved that the Hodge numbers of $F'$ and $F_0'$ are the same as the ones of analogous surfaces defined for a different threefold $X$ (a Fano threefold of genus 6). This allows to compute them:
$$h^{0,0}(F_0')=1,\ h^{1,0}(F_0')=0,\ \ h^{2,0}(F_0')=45,\ \ h^{1,1}(F_0')=100$$
(cf. \ \cite[(0.7)]{Logachev}, see also \cite[(0.6)]{Logachev} for the Hodge numbers of $F'$).

 The fixed locus $\Fix_{\iota_H}(H)$ is the surface $F_0'$ considered before. This allows to compute the Hodge numbers of the crepant resolution $Y$ of $H/\iota_H$ (as in the previous section):
$$
\begin{array}{l}
h^{0,0}(Y)=1,\ \ h^{1,0}(Y)=h^{2,0}=h^{3,0}=0,\\
h^{1,1}(Y)=\dim(H^{1,1}(H)^{\iota_H})+\dim(H^{0,0}(F_0'))=1+1=2,\\
h^{2,1}(Y)=\dim(H^{2,1}(H)^{\iota_H})+\dim(H^{1,0}(F_0'))=0+0=0,\\
h^{3,1}(Y)=\dim(H^{3,1}(H)^{\iota_H})+\dim(H^{2,0}(F_0'))=20+45=65,\\
h^{2,2}(Y)=\dim(H^{2,2}(H)^{\iota_H})+\dim(H^{1,1}(F_0'))=212+100=312.\\
\end{array}$$

\section{Non symplectic natural involutions on $S^{[2]}$ and the Calabi--Yau 4-folds $Y_S$ and $Z_S$}\label{sec: natural symplectic involution}

Here we consider the case of natural non symplectic involutions on the Hilbert scheme of two points $S^{[2]}$ of a K3 surface $S$. As already discussed in \S \ref{intro-auto}, if the K3 surface $S$ admits a non symplectic involution $\iota_S$, this induces a non symplectic involution, denoted by $\iota_S^{[2]}$, on $S^{[2]}$. Our first goal will be the construction of the Calabi--Yau variety $Y_S$, the crepant resolution of $S^{[2]}/\iota_S^{[2]}$, and the computation of its Hodge numbers, in Theorem \ref{prop: HOdge diamond natural}.
Then, in Section \ref{subsec: ZS} we recall the construction of other Calabi--Yau 4-folds obtained applying the Borcea--Voisin construction to $S\times S$: this Calabi--Yau will be denoted by $Z_S$ and is the crepant resolution of $\left(S\times S\right)/\langle \iota_S\times \iota_S\rangle$. This allows us to show that there exists an involution on $Z_S$, denoted by $\sigma_Z$, such that $Z_S/\sigma_Z$ is birational to $Y_S$, see Proposition \ref{prop: ZS/sigma birat to YS}.
The existence of this involution depends on the fact that the group $(\Z/2\Z)^2=\langle \iota_S\times \iota_S,\sigma\rangle$ acts on $S\times S$ and the 4-fold $Y_S$ is birational to the quotient of $S\times S$ for the full group, while $S^{[2]}$ and $Z_S$ are birational to the quotients of $S\times S$ by two specific subgroups of order 2 of $(\Z/2\Z)^2$. In Proposition \ref{prop: other hk quotient of SxS}, we will prove that the quotient of $S\times S$ by the third cyclic subgroup of $(\Z/2\Z)^2$ admits a crepant resolution $V$ which is isomorphic to $S^{[2]}$. In Section \ref{subsec: the picard group of YS}, we describe explicitly the Picard group of $Y_S$  and the branch divisors associated to the three double covers $Z_S\dashrightarrow Y_S$, $S^{[2]}\dashrightarrow Y_S$ and $V\dashrightarrow Y_S$.

\subsection{The Calabi--Yau 4-folds $Y_S$}\label{subsec: Y_S}

We first recall the main results on non symplectic involutions on $K3$ surfaces, due to Nikulin, see \cite{Nikulin}.

Let $S$ be a K3 surface admitting a non symplectic involution $\iota_S$. 

We denote by $r:=\mathrm{rank}( H^{2}(S,\Z)^{\iota_S})$ and by $a$ the integer defined by $$(H^2(S,\Z)^{\iota_S})^{\vee}/(H^2(S,\Z)^{\iota_S})\simeq (\Z/2\Z)^a.$$

The fixed locus of $\iota_S$ consists of $N$ disjoint curves. If $N=0$, then $r=a=10$. Otherwise, let us consider the fixed curve with highest genus and denote it by $C$. If the fixed locus of $\iota_S$ consists of two curves of genus one, and in this case $r=10$, $a=8$. If the fixed locus of $\iota_S$ is neither empty nor the disjoint union of two curves of genus one, then $\Fix_{\iota_S}=C\coprod_{i=1}^{N-1} R_i$ where $R_i$ is a rational curve for all $i=0,\dots,N-1$. Let us denote by $g:=g(C)$, by $k:=N-1$, by $N'=\sum_{j=1}^Ng(D_j)$, where the $D_j$ are all the curves in the fixed locus of $\iota_S$.\\
The invariants just introduced satisfy the following relations:
\begin{equation}\label{eq: relations}\begin{array}{ll} k=\left(r-a\right)/2,&g=\left(22-r-a\right)/2,\\
N=\left(2+r-a\right)/2,&N'=\left(22-r-a\right)/2,\\
r=11+k-g=10+N-N',&a=11-k-g=12-N-N'.\end{array}
\end{equation}

Suppose now that the fixed locus of $\iota_S$ on $S$ consists of a curve $C$ of genus $g:=g(C)$ and of $k$ other rational curves.
An application of \cite[\S 4.2]{Boissiere-Natural} shows that the fixed locus of $\iota_S^{[2]}$ on $S^{[2]}$ consists of the following surfaces:
\begin{itemize}
\item one surface isomorphic to $C^{[2]}$ whose Hodge numbers are $h^{0,0}=1$, $h^{1,0}=g$, $h^{2,0}=g(g-1)/2$, $h^{1,1}=1+g^2$;
\item $k$ surfaces isomorphic to $C\times R_j\simeq C\times \mathbb{P}^1$, $j=1,\ldots k$, whose Hodge numbers are $h^{0,0}=1$, $h^{1,0}=g$, $h^{2,0}=0$, $h^{1,1}=2$; 
\item $k$ surfaces isomorphic to $(\mathbb{P}^1)^{[2]}$, whose Hodge numbers are $h^{0,0}=1$, $h^{1,0}=0$, $h^{2,0}=0$, $h^{1,1}=1$;\item $k(k-1)/2$ surfaces isomorphic to $\mathbb{P}^1\times\mathbb{P}^1$, whose Hodge numbers are $h^{0,0}=1$, $h^{1,0}=0$, $h^{2,0}=0$, $h^{1,1}=2$; 
\item one surface which is isomorphic to the smooth quotient surface $S/\iota_S$ hence $h^{0,0}=1$, $h^{1,0}=0$, $h^{2,0}=0$ and $h^{1,1}=r$. 
\end{itemize}
So the Hodge diamond of the fixed locus is: 
$$
\begin{array}{llll}
1+k+k+k(k-1)/2+1&&\\
&g+kg&\\
1+g^2+3k+k(k-1)+r&&g(g-1)/2.
\end{array}
$$

We observe that the topological characteristic of the fixed locus is  $2(r^2-19r+96)$ (where we used the relations \eqref{eq: relations}).\\

Now we consider the action of $\iota_S^{[2]}$ on the cohomology of $S^{[2]}$. Obviously, $\iota_S^{[2]}$ is a specific choice of an involution on a specific hyperk\"ahler variety which is deformation equivalent to (and in fact coincides with) a Hilbert scheme of two points on a K3 surface. So we are exactly in the setting of Section  \ref{subsect: HK  defomration equiv to Hilb2}. Since $\iota_S^{[2]}$ is induced by $\iota_S$ and preserves the exceptional divisor in $S^{[2]}$, we have that $t_{1,1}=r+1$.

Hence, applying Proposition \ref{prop: Hodge diamond Y for K3 type, also non natural}, we obtain the Hodge diamond of any crepant resolution $Y_S$ of $S^{[2]}/\iota_S^{[2]}$:

\begin{theorem}\label{prop: HOdge diamond natural} Let $S$ be a K3 surface and $\iota_S$ be a non symplectic involution on $S$, whose fixed locus is associated to the values $(N,N')$. Let $Y_S$ be the blow up of $S^{[2]}/\iota_S^{[2]}$ in its fixed locus. Then the Hodge numbers of any Calabi--Yau 4-fold birational to $Y_S$ are: $h^{i,0}=0,\ i=1,2,3,\ h^{0,0}=h^{4,0}=1$ and

$$
\begin{array}{llll}
h^{1,1}=&(24+3N-2N'+N^2)/2,\\
h^{2,1}=&NN',\\
h^{3,1}=&(20-2N+N'+N'^2)/2,\\
h^{2,2}=&132+2N-2N'+2N^2-2NN'+2N'^2.\\
\end{array}
$$
\end{theorem}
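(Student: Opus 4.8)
The plan is to deduce Theorem \ref{prop: HOdge diamond natural} directly from Proposition \ref{prop: Hodge diamond Y for K3 type, also non natural} by substituting the correct numerical data for the pair $(S^{[2]},\iota_S^{[2]})$. Since $\iota_S^{[2]}$ is a non symplectic involution on a hyperk\"ahler $4$-fold of $K3$ type whose fixed locus is the disjoint union of smooth surfaces (the five types listed before the theorem), Proposition \ref{prop: Hodge diamond Y for K3 type, also non natural} applies verbatim, and everything reduces to computing the three inputs $t_{1,1}$, $c=\sum_j h^{1,0}(B_j)$ and $d=\sum_j h^{2,0}(B_j)$ in terms of $(N,N')$.

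First I would record $t_{1,1}$. As already observed in the text, $\iota_S^{[2]}$ is induced by $\iota_S$ and fixes the exceptional divisor $\delta$, so $H^{1,1}(S^{[2]})^{\iota_S^{[2]}}=\left(H^{1,1}(S)^{\iota_S}\right)\oplus \C\delta$, whence $t_{1,1}=r+1$ where $r=\mathrm{rank}\,H^2(S,\Z)^{\iota_S}$. Using the relation $r=10+N-N'$ from \eqref{eq: relations} this gives $t_{1,1}=11+N-N'$. Next I would compute $c$ and $d$ by summing the Hodge numbers over the components of $\Fix(\iota_S^{[2]})$ listed above. Only the components $C^{[2]}$ and the $k$ copies of $C\times\mathbb{P}^1$ contribute to $h^{1,0}$: we get $c=g+kg=g(k+1)=gN$; since by \eqref{eq: relations} $g=(22-r-a)/2=N'$ (indeed $g=N'$ when the fixed locus is a single curve of genus $g$ plus $k$ rational curves, and $N=k+1$), this yields $c=NN'$, matching $h^{2,1}$. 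For $d$, only $C^{[2]}$ contributes, with $h^{2,0}(C^{[2]})=\binom{g}{2}=g(g-1)/2=N'(N'-1)/2$, so $d=N'(N'-1)/2$. One should also double-check the degenerate cases $N=0$ (where $r=a=10$, so $t_{1,1}=11$, and the fixed locus of $\iota_S^{[2]}$ is just $S/\iota_S$, giving $c=d=0$) and the two-elliptic-curves case $N=N'=2$ separately, verifying that the final formulas still hold; these are the only subtleties, since in those cases the list of fixed surfaces is slightly different but the counts of $h^{1,0}$ and $h^{2,0}$ come out the same.

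Finally I would substitute $t_{1,1}=11+N-N'$, $c=NN'$, $d=N'(N'-1)/2$ into the four formulas of Proposition \ref{prop: Hodge diamond Y for K3 type, also non natural} and simplify. For instance $h^{2,1}=c=NN'$ is immediate; $h^{3,1}=21-t_{1,1}+d=21-(11+N-N')+N'(N'-1)/2=(20-2N+N'+N'^2)/2$ after clearing denominators; and similarly $h^{1,1}=(112-19t_{1,1}+2c-2d+t_{1,1}^2)/2$ and $h^{2,2}=352+2t_{1,1}^2-42t_{1,1}+2c$ reduce to the stated expressions in $(N,N')$ by routine algebra (one can cross-check $h^{2,2}$ against the Euler characteristic of $\Fix(\iota_S)$ recorded above, $2(r^2-19r+96)$). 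The proof is therefore essentially bookkeeping: the only place demanding care is the translation between the two coordinate systems $(r,a)$, $(g,k)$ and $(N,N')$ via \eqref{eq: relations}, and making sure the formula list from Proposition \ref{prop: Hodge diamond Y for K3 type, also non natural} is applied with the right $c$ and $d$; I expect no genuine obstacle beyond keeping the degenerate fixed-locus cases in mind.
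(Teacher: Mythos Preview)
Your proposal is correct and follows exactly the approach of the paper: compute $t_{1,1}=r+1$, read off $c$ and $d$ from the Hodge diamond of the fixed locus of $\iota_S^{[2]}$ described just before the theorem, and plug into Proposition \ref{prop: Hodge diamond Y for K3 type, also non natural}. Your extra care in verifying the degenerate cases $N=0$ and $(N,N')=(2,2)$ is a nice addition; in fact the identities $c=NN'$ and $d=N'(N'-1)/2$ hold uniformly, since for any collection of fixed curves $D_1,\dots,D_N$ with $N'=\sum g(D_j)$ one has $\sum_j\binom{g_j}{2}+\sum_{i<j}g_ig_j=\binom{N'}{2}$ and $\sum_j g_j+\sum_{i<j}(g_i+g_j)=NN'$.
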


\begin{rem} Since there are $64$ possible choices for the pairs $(N,N')$ associated to $\iota_S$ we obtain $64$ different Hodge diamonds for the Calabi--Yau 4-folds $Y_S$. In all the admissible pairs, $1\leq N\leq 10$ and $0\leq N'\leq 10$. The complete list of the Hodge diamonds of the Calabi--Yaus $Y_S$ is given in the Appendix \ref{Appendix}.

\end{rem}
\subsection{The Calabi--Yau of Borcea--Voisin type and $Z_S$}\label{subsec: ZS}
Let $B_1$ and $B_2$ be two Calabi--Yau varieties of dimension $n_1$ and $n_2$ respectively. Let us assume that $B_i$ admits an involution $\iota_i$ which does not preserve the period of $B_i$. Let us consider the involution $\iota_1\times \iota_2$ on $B_1\times B_2$. If all the components of the fixed locus of $\iota_1\times \iota_2$ on $B_1\times B_2$ have codimension 2, then there exists a crepant resolution of $(B_1\times B_2)/(\iota_1\times \iota_2)$ which is a Calabi--Yau of dimension $(n_1+n_2)$. The Calabi--Yau manifolds constructed in this way are called of Borcea--Voisin type, after the original independent papers by Borcea \cite{Borcea} and Voisin \cite{Voisin}; the generalization that we have just reviewed is a result by Cynk and Hulek (see \cite[Proposition 2.1]{Cynk-Hulek}), where the authors refer to the construction as the ``Kummer construction''.

In our setting it is quite natural to consider a Calabi--Yau 4-fold of Borcea--Voisin type, by choosing $B_1=B_2=S$ and $\iota_1=\iota_2=\iota_S$, with the same notation of the previous section. The quotient $(S\times S)/(\iota_S\times \iota_S)$ is singular along some surfaces and the blow up of these surfaces gives a smooth Calabi--Yau 4-fold, that we denote by $Z_S$. In this section we compute the Hodge numbers of $Z_S$, which do not depend on the birational model of $Z_S$ which we choose; these computations are not new and can be found in \cite{Dillies}.

Let $S$ be a K3 surface and $\iota_S$ be a non symplectic involution of $S$ whose fixed locus consists of a curve of genus $g$ and $k$ rational curves. 

The Hodge diamond of $S\times S$ is
\begin{eqnarray*}\label{Hodge S2}\begin{array}{ccccccccccc}
1&&&&\\
&0&&&\\
40&&2&&\\
&0&&0&\\
404&&40&&1.\\
\end{array}\end{eqnarray*}

First we consider the action of $(\iota_S\times\iota_S)^*$ on $H^*(S\times S,\Q)$. The invariant and the anti invariant part of $H^*(S\times S,\Q)$ give two sub-Hodge structures of $H^*(S\times S,\Q)$ which are respectively:
\begin{align*}\label{Hodge inv anti inv BV}
\begin{array}{l|r}
\begin{array}{llllllll}
1&&&&\\
&0&&&\\
2r&&0&&\\
&0&&0&\\
2r^2-40r+404&&40-2r&&1,\\
\end{array}&
\begin{array}{lllllllllll}
0&&&&\\
&0&&&\\
40-2r&&2&&\\
&0&&0&\\
40r-2r^2&&2r&&0.
\end{array}\end{array}
\end{align*}

The fixed locus of $\iota_S\times \iota_S$ on $S\times S$ consists of:
\begin{itemize}
\item one surface isomorphic to $C\times C$ whose Hodge numbers are $h^{0,0}=1$, $h^{1,0}=2g$, $h^{2,0}=g^2$, $h^{1,1}=2+2g^2$;
\item $2k$ surfaces isomorphic to $C\times \mathbb{P}^1$ whose Hodge numbers are $h^{0,0}=1$, $h^{1,0}=g$, $h^{2,0}=0$, $h^{1,1}=2$; 
\item $k^2$ surfaces isomorphic to $\mathbb{P}^1\times\mathbb{P}^1$, whose Hodge numbers are $h^{0,0}=1$, $h^{1,0}=0$, $h^{2,0}=0$, $h^{1,1}=2$. 
\end{itemize}

It follows that the Hodge diamond of the fixed locus is 
$$
\begin{array}{llll}
1+2k+k^2&&\\
&2g(1+k)&\\
2+2g^2+4k+2k^2&&g^2.
\end{array}
$$

Hence, using relations \eqref{eq: relations}, the Hodge numbers of $Z_S$, and so of any crepant resolution of $(S\times S)/(\iota_S\times\iota_S)$, are:

$$
\begin{array}{l}
h^{1,1}=20+2N-2N'+N^2,\\
h^{2,1}=2NN',\\
h^{3,1}=20-2N+2N'+N'^2,\\
h^{2,2}=204+4N^2-4NN'+4N'^2.
\end{array} 
$$
\subsection{The quotient of $Z_S$, birational to $Y_S$}\label{subsec: quotient of ZS}

The group $\langle \sigma, \iota_S\times \iota_S\rangle\simeq (\Z/2\Z)^2$ is contained in $\Aut(S\times S)$. In particular, the automorphism $\sigma$ commutes with $\iota_S\times \iota_S$ and thus it induces an automorphism of $(S\times S)/(\iota_S\times \iota_S)$, denoted by $\sigma$. Since the singular locus of $(S\times S)/(\iota_S\times \iota_S)$ is the image, under the quotient map, of $\Fix_{\iota_S}\times \Fix_{\iota_S}$, and since $\Fix_{\iota_S}\times \Fix_{\iota_S}$ is preserved by $\sigma$, the automorphism $\sigma$ extends to an automorphism $\sigma_Z$ of $Z_S$. Moreover, $\sigma_Z$ preserves the period on $Z_S$, because $\sigma$ preserves the $(4,0)$ form on $S\times S$.

So we have now a Calabi--Yau 4-fold $Z_S$ with an involution, $\sigma_Z$, which preserves the period of $Z_S$. 
\begin{proposition}\label{prop: ZS/sigma birat to YS}
The quotient 4-fold $Z_S/\sigma$ is birational to $Y_S$.
\end{proposition}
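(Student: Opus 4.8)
The plan is to exhibit both $Z_S/\sigma_Z$ and $Y_S$ as crepant resolutions (up to birational modification) of the same singular quotient $(S\times S)/\langle \sigma, \iota_S\times\iota_S\rangle$, and then invoke the birational invariance of the Hodge/Betti numbers of Calabi--Yau varieties that was already used above. First I would record the commuting square of quotients: since $\sigma$ and $\iota_S\times\iota_S$ generate a group $G\simeq(\Z/2\Z)^2$ acting on $S\times S$, taking the quotient in two stages gives
$$
\left(S\times S\right)/G \;\cong\; \bigl((S\times S)/(\iota_S\times\iota_S)\bigr)/\sigma \;\cong\; \bigl((S\times S)/\sigma\bigr)/\iota_S^{[2]},
$$
where in the last expression I use that $(S\times S)/\sigma$ has $S^{[2]}$ as crepant resolution (the blow up along the diagonal), that $\sigma$ and $\iota_S\times\iota_S$ commute, and that the induced involution on $S^{[2]}$ is exactly $\iota_S^{[2]}$. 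Thus $Y_S$, being by definition the blow up of $S^{[2]}/\iota_S^{[2]}$ along its singular locus, is birational to $(S\times S)/G$; this is the assertion already quoted in the introduction.

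Next I would argue that $Z_S/\sigma_Z$ is also birational to $(S\times S)/G$. By construction $Z_S$ is a crepant resolution of $(S\times S)/(\iota_S\times\iota_S)$, so it is birational to it; hence $Z_S/\sigma_Z$ is birational to $\bigl((S\times S)/(\iota_S\times\iota_S)\bigr)/\sigma$, which is $(S\times S)/G$. Here I must make sure the birational map $Z_S \dashrightarrow (S\times S)/(\iota_S\times\iota_S)$ is $\sigma$-equivariant, which is immediate because $\sigma_Z$ is defined precisely as the extension of the automorphism $\sigma$ of $(S\times S)/(\iota_S\times\iota_S)$ to the resolution $Z_S$ (as recalled in Section \ref{subsec: quotient of ZS}), so it commutes with the resolution map on the locus where the latter is an isomorphism. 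Therefore $Z_S/\sigma_Z$ and $Y_S$ are both birational to $(S\times S)/G$, hence birational to each other, which is exactly the claim.

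The only genuine subtlety — and the step I expect to be the main obstacle — is bookkeeping about when the various maps are really isomorphisms in codimension $1$ versus merely birational, and checking that $\sigma_Z$ does not do anything unexpected along the exceptional loci of the resolution $Z_S\to (S\times S)/(\iota_S\times\iota_S)$. But since the statement only asserts birationality (not isomorphism — the sharper isomorphism statements are deferred to Section \ref{sec: quotients}), it suffices to work on the open dense locus where $S\times S\to (S\times S)/G$ is étale, i.e.\ the complement of $(\Fix_{\iota_S}\times S)\cup(S\times\Fix_{\iota_S})\cup\Delta_S$; over this locus all four varieties $S\times S$, $S^{[2]}$, $Z_S$, $Y_S$, and the intermediate quotients agree with the corresponding open subsets of $(S\times S)/G$ after passing to the appropriate quotient, and the chain of birational identifications above is literally a chain of isomorphisms of open sets. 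I would then conclude by transporting these isomorphisms to birational maps of the smooth projective models, completing the proof.
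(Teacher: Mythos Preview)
Your argument is correct and is essentially the same as the paper's: both $Z_S/\sigma_Z$ and $Y_S$ are shown to be birational to the common quotient $(S\times S)/\langle\sigma,\iota_S\times\iota_S\rangle$ by factoring the $(\Z/2\Z)^2$-quotient in the two possible orders, which is exactly what the paper encodes in its commutative diagram. The only quibble is that your ``\'etale locus'' should also exclude $\Gamma_S=\{(p,\iota_S(p))\}$ (the fixed locus of $\sigma\circ(\iota_S\times\iota_S)$), but this does not affect the birationality conclusion.
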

\proof The statement follows by the commutativity of the following diagram
$$
\xymatrix{ &&S\times S\ar[dr]\ar[dl]^{\pi}\ar[dd]\\
Z_S\ar[dr]\ar@{-->}[r]^{\beta_Z}&(S\times S)/(\iota_S\times\iota_S)\ar[dr]&&(S\times S)/\sigma\ar[dl]&S^{[2]}\ar@{-->}[l]\ar[dl]\\
&Z_S/\sigma_Z\ar@{-->}[r]&(S\times S)/\langle \sigma,\iota_S\times \iota_S\rangle&S^{[2]}/\iota_S^{[2]}\ar@{-->}[l]& Y_S\ar@{-->}[l]}
$$
where all dash arrows are the birational maps induced by the chosen crepant resolutions and the other arrows are quotient maps. 
 
\endproof

We will see that the 4-fold $Z_S/\sigma_Z$ is singular along surfaces, so it admits a crepant resolution $\widetilde{Z_S/\sigma_Z}$, which is a Calabi--Yau variety and is birational to $Y_S$. In particular, the Hodge numbers of $\widetilde{Z_S/\sigma_Z}$ coincide with those of $Y_S$.

Moreover, by \cite[Pag. 420]{Kawamata}, we know that the birational map $Y_S\dashrightarrow\widetilde{Z_S/\sigma_Z}$ can be decomposed into a sequence of flops.

We explicitly determine the fixed locus of the automorphism $\sigma'$ induced by $\sigma$ on $(S\times S)/(\iota_S\times \iota_S)$. Let $\pi:S\times S\ra (S\times S)/(\iota_S\times \iota_S)$ be the quotient map. The automorphism $\sigma:S\times S\ra S\times S$ acts by sending $(p,q)$ to $(q,p)$. The points on $(S\times S)/(\iota_S\times \iota_S)$ are denoted by $(\overline{p},\overline{q})$, where $(\overline{p},\overline{q})$ is the common image of $(p,q)\in S\times S$ and $(\iota_S(p),\iota_S(q))\in S\times S$ under the quotient map (i.e.  $(\overline{p},\overline{q})=\pi(p,q)=\pi(\iota_S(p),\iota_S(q))$).
Thus the condition $\sigma'(\overline{p},\overline{q})=(\overline{p},\overline{q})$ implies that either $p=q$ or $p=\iota(q)$. Hence the surfaces $\Sigma_1:=\pi(\{(p,p)|p\in S\})$ and $\Sigma_2:=\pi(\{(p,\iota_S(p))|p\in S\})$ are fixed by $\sigma'$.

Let $\beta_Z:Z_S\ra (S\times S)/(\iota_S\times \iota_S)$ be the blow up of $(S\times S)/(\iota_S\times \iota_S)$ in its singular locus. The two surfaces $\pi(\{(p,p)|p\in S\})$ and $\pi(\{(p,\iota_S(p))|p\in S\})$ intersect exactly in the curve $\pi(\Delta_{\Fix_{\iota_S}\times \Fix_{\iota_S}})$ inside the singular locus of $(S\times S)/(\iota_S\times \iota_S)$. A local analysis shows that $\sigma_Z$ fixes two disjoint surfaces in $Z_S$ (which are the strict transforms of the surfaces $\Sigma_1$ and $\Sigma_2$).

\subsection{The other $2:1$ quotient of $S\times S$} \label{subsec: other 2 quotient of SxS}
The group  $\langle \sigma, \iota_S\times \iota_S\rangle\simeq (\Z/2\Z)^2\subset\Aut(S\times S)$ contains three distinct copies of $\Z/2\Z$: the one generated by $\sigma$, which gives rise to $S^{[2]}$, the one generated by $\iota_S\times \iota_S$, which gives rise to $Z_S$, and the one generated by $\sigma\circ (\iota_S\times \iota_S)$.
\begin{proposition}\label{prop: other hk quotient of SxS}
The blow up of $(S\times S)/\left(\sigma\circ (\iota_S\times \iota_S)\right)$ in its singular locus is isomorphic to $S^{[2]}$.
\end{proposition}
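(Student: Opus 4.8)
The plan is to produce an automorphism of $S\times S$ which conjugates the involution $\sigma\circ(\iota_S\times\iota_S)$ to $\sigma$, and then to use that passing to the quotient and blowing up the singular locus are both compatible with isomorphisms.

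Write $\tau:=\sigma\circ(\iota_S\times\iota_S)$, so that $\tau(p,q)=(\iota_S(q),\iota_S(p))$ and $\tau^2=\id$. First I would introduce $\phi:=\id_S\times\iota_S\in\Aut(S\times S)$, i.e.\ $\phi(p,q)=(p,\iota_S(q))$; since $\iota_S^2=\id_S$ we have $\phi^{-1}=\phi$. A one-line computation then gives
\[
\phi\circ\sigma\circ\phi^{-1}(p,q)=\phi\circ\sigma(p,\iota_S(q))=\phi(\iota_S(q),p)=(\iota_S(q),\iota_S(p))=\tau(p,q),
\]
so $\phi$ conjugates $\sigma$ to $\tau$. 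Hence $\phi$ descends to an isomorphism $\overline{\phi}:(S\times S)/\sigma\xrightarrow{\ \sim\ }(S\times S)/\tau$. As a consistency check, $\phi$ carries the fixed locus $\Delta$ of $\sigma$ onto the fixed locus $\{(p,\iota_S(p))\mid p\in S\}$ of $\tau$, which is the graph of $\iota_S$; both quotients acquire transversal $A_1$-singularities precisely along the images of these smooth surfaces.

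Then I would conclude by observing that an isomorphism automatically sends the singular locus of the source onto that of the target, and that blowing up is functorial for isomorphisms; thus $\overline{\phi}$ lifts to an isomorphism between the blow up of $(S\times S)/\sigma$ along $\Sing\bigl((S\times S)/\sigma\bigr)$ and the blow up of $(S\times S)/\tau$ along $\Sing\bigl((S\times S)/\tau\bigr)$. Since, by the construction recalled in Section~\ref{sec: preliminaries}, the former is $S^{[2]}$, this gives the claim.

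The argument is essentially formal once $\phi$ has been written down, so I do not expect any real obstacle; the only point worth stating explicitly is that $\phi$ being an isomorphism of the pairs $(S\times S,\sigma)$ and $(S\times S,\tau)$ forces the two quotients (and therefore their blow ups) to have the same local structure, so that a single blow up of the singular surface of $(S\times S)/\tau$ genuinely reproduces $S^{[2]}$ and not merely a variety birational to it.
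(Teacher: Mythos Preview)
Your proof is correct and is essentially the same as the paper's: both produce an automorphism of $S\times S$ conjugating $\sigma$ to $\sigma\circ(\iota_S\times\iota_S)$ (the paper uses $\iota_S\times\id_S$ rather than your $\id_S\times\iota_S$, which is immaterial) and then pass to the quotient and the blow up. The only cosmetic difference is the order of operations---the paper blows up first and then quotients, whereas you quotient first and then blow up the singular locus---but the underlying idea is identical.
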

\proof

We consider the isomorphism $\phi=\iota_S\times \id_S:S\times S\rightarrow S\times S$; given the graph $\Gamma_{\iota_S}$ of $\iota_S$, we have $\phi^{-1}(\Gamma_{\iota_S})=\Delta_S$, the diagonal in $S^2$. Since blow-up commutes with flat base-change, we obtain the following commutative diagram:
\[
\xymatrix{
\mathrm{Bl}_{\Delta_S}S^2\ar@{->}[r]^{\tilde{\phi}}\ar@{->}[d]&\mathrm{Bl}_{\Gamma_{\iota_S}}S^2\ar@{->}[d]\\
S^2\ar@{->}[r]^{\phi}& S^2.
}
\]
We have thus deduced the existence of an isomorphism between the two blow-ups. 
On the other hand, we remark that $\mathrm{Fix}(\sigma)=\Delta_S$ and $\mathrm{Fix}(\sigma\circ \iota_S\times \iota_S)=\Gamma_{\iota_S}$, hence we get induced involutions on the blow-ups, that we still denote as on $S^2$. Moreover, $\phi\circ\sigma=(\sigma\circ \iota_S\times \iota_S)\circ\phi$, so everything is equivariant and induces an isomorphism also between the smooth quotients, i.e. we obtain
\[
S^{[2]}\xrightarrow{\tilde{\phi}}\mathrm{Bl}_{\Gamma_{\iota_S}}S^2/(\sigma\circ \iota_S\times \iota_S).
\]
\endproof

\subsection{The Picard group of $Y_S$}\label{subsec: the picard group of YS}
Since $Y_S$ is a Calabi--Yau variety, $H^2(Y_S,\Z)=\Pic(Y_S)$, so in order to determine a $\Q$-basis of $Y_S$ it suffices to find a $\Q$-basis of $H^{2}(Y_S,\Z)$. This follows directly from the previous description of $H^{1,1}(Y_S)$.
Let us assume that $S$ is generic among the K3 surfaces admitting a non symplectic involution $\iota_S$ with a given fixed locus: this is equivalent to require that $\rk(\NS(S))=\dim H^2(S)^{\iota_S}$, i.e. $\rho(S)=\rk \NS(S)=r$. Let us denote by $D_S^{(1)},\ldots D_S^{(r)}$ a basis of $\NS(S)=\Pic(S)$. Let us now consider $S\times S$. A basis of $H^{1,1}(S)^{\iota_S}\otimes H^{0,0}(S)$ is given by $D_S^{(i)}\times [S]$ for $i=1,\dots,r$. Each divisor $D_S^{(i)}\times [S]$ is sent by $\sigma^*$ to $[S]\times D_S^{(i)}\in H^{0,0}(S)\otimes H^{1,1}(S)^{\iota_S}$. In particular, the class $D_S^{(i)}\times [S]+[S]\times D_S^{(i)}$ is preserved by $\langle \iota_S\times \iota_S, \sigma\rangle$ and thus it corresponds to a class in $\Pic(Y_S)$, denoted by $D_Y^{(i)}$. The other generators of $\Pic(Y_S)$ come from the desingularization of the quotients of order two.

We recall once again the construction of our desingularizations by a diagram:
$$
\xymatrixcolsep{3pc}\xymatrix {&\widetilde{S\times S}\ar[r]^{\beta_{\Fix(\iota_S\times \iota_S)}}\ar[d]_{/\iota_S\times \iota_S}&S\times S\ar@{-->}[dl]\ar@{-->}[dr]\ar[dd]& \widetilde{\widetilde{S\times S}}\ar[l]_{\beta_\Delta}\ar[d]_{/\sigma=:\pi_1}\\
\widetilde{Z_S}\ar[r]_{\beta_{\Fix(\sigma_Z)}}\ar[d]_{/\sigma _Z }&Z_S&&S^{[2]}&\widetilde{S^{[2]}}\ar[l]_{\beta_{\Fix(\iota_S^{[2]})}}\ar[d]_{/\iota_S^{[2]}=:\pi_2}\\
\widetilde{Z_S/\sigma_Z}\ar[rr]&&(S\times S)/\langle\iota_S\times \iota_S,\sigma\rangle&&Y_S\ar[ll]
}
$$
We will denote the quotient maps as follows: $\pi_1:\widetilde{S\times S}\ra S^{[2]}$ and $\pi_2: \widetilde{S^{[2]}}\ra Y_S$. We study the divisors introduced by the blow ups $\beta_{\Delta}$ and $\beta_{\Fix(\iota_S^{[2]})}$ and identify the ones which are preserved by the quotient maps.  

The blow up $\beta_{\Delta}$ introduces one divisor which is the exceptional divisor over the diagonal and is also the branch divisor of the quotient $\pi_1:\widetilde{S\times S}\ra S^{[2]}$. The natural involution $\iota_S^{[2]}$ preserves this divisor, called the exceptional divisor of $S^{[2]}$, (cfr. \cite[Theorem 1]{BS}), so its image under the quotient map $\pi_2$ is a divisor in $\Pic(Y_S)$, which will be denoted by $E_{\Delta}$. The other divisors of $Y_S$ come from the blow up $\beta_{\Fix(\iota_S^{[2]})}$. They are:\begin{itemize}\item $E_{C\times C}$, the exceptional divisor over $C^{[2]}$ (which is a surface fixed by $\iota_S^{[2]}$ and is the image of $C\times C\subset S\times S$ under the quotient map $\pi_1$);
\item 
$E_{R_i\times R_i}$ for $i=1,\ldots, k$, the exceptional divisor over $R_i^{[2]}$ (which is a surface fixed by $\iota_S^{[2]}$ and is the image of $R_i\times R_i\subset S\times S$ under the quotient map $\pi_1$); 
\item   $E_{C\times R_i}$, for $i=1,\ldots, k$: $C\times R_i\subset S\times S$ is a surface which is sent to $R_i\times C$ by $\sigma$, the common image of these surfaces is fixed by $\iota_S^{[2]}$ and so it is blown up by $\beta_{\Fix(\iota_S^{[2]})}$, and $E_{C\times R_i}$ is the exceptional divisor of this blow up;
\item $E_{R_i\times R_j}$, for $i,j=1,\ldots k$, $i<j$: $R_i\times R_j\subset S\times S$ is a surface which is sent to $R_j\times R_i$ by $\sigma$, and the common image of these surfaces is fixed by $\iota_S^{[2]}$ and so it is blown up by $\beta_{\Fix(\iota_S^{[2]})}$; $E_{R_i\times R_j}$, with $i<j$, is the exceptional divisor of this blow up;
\item $E_{S/\iota_S}$: $\iota_S^{[2]}$ fixes a surface inside the exceptional divisor of $S^{[2]}$, which is given by the image of points $(p,\iota_S(p))\in S\times S$; this surface is isomorphic to $S/\iota_S$ and so $\beta_{\Fix(\iota_S^{[2]})}$ introduces an exceptional divisor on it, denoted by $E_{S/\iota_S}$.\end{itemize}
\begin{prop}
With the same notation as above, let $\mathcal{S}=\{ D_Y^{(h)}$,  $E_{\Delta}$, $E_{S/\iota_S}$, $E_{C\times C}$, $E_{C\times R_i}$, $E_{R_i\times R_i}$, $E_{R_i\times R_j}\}$, where $h=1,\ldots ,r$, $i,j=1,\ldots k$ and $i<j$. Then $\mathcal{S}$ is a $\Q$-basis of $\NS(Y_S)$,  $\NS(Y_S)\otimes \Q$ can be identified with $\NS(\widetilde{Z_S/\sigma_Z})\otimes \Q$ and so $\mathcal{S}$ is a  $\Q$-basis of $\NS((\widetilde{Z_S/\sigma_Z})$.
The divisors 
$$B_{\iota_S^{[2]}}:=\sum_{i=1}^k\left(E_{C\times R_i}+E_{R_i\times R_i}+\sum_{j=i+1}^k E_{R_i\times R_j}\right)+E_{C\times C}+E_{S/\iota_S}$$
$$B_{\sigma_Z}:=E_{\Delta}+E_{S/\iota_S}$$
and $B_{\iota_S^{[2]}}+B_{\sigma_Z}$ are 2-divisible in $\NS(Y_S)$ and indeed they are associated to three different double covers of $Y_S$. \end{prop}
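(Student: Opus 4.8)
The plan is to prove, in turn, that $\mathcal S$ is a $\Q$-basis of $\NS(Y_S)$, that it is carried to a $\Q$-basis of $\NS(\widetilde{Z_S/\sigma_Z})$, and that the three displayed divisors are $2$-divisible and arise as branch data of three double covers of $Y_S$. For the first point, since $Y_S$ is Calabi--Yau we have $\NS(Y_S)=H^2(Y_S,\Z)$, so $\rho(Y_S)=h^{1,1}(Y_S)$, which is computed in Theorem \ref{prop: HOdge diamond natural}; in the case at hand the fixed locus of $\iota_S$ is a curve $C$ of genus $g$ together with $k$ rational curves, so $N=k+1$, $N'=g$ and $r=11+k-g$ by \eqref{eq: relations}, and a direct substitution shows that $|\mathcal S|=r+3+2k+\binom{k}{2}$ equals $h^{1,1}(Y_S)$. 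It therefore suffices to prove that the elements of $\mathcal S$ are $\Q$-linearly independent.

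To prove the independence I would pull back along the double cover $\pi_2:\widetilde{S^{[2]}}\to Y_S$ of Section \ref{subsec: crepant resoltuion} (recall $Y_S\cong\widetilde{S^{[2]}}/\widetilde{\iota_S^{[2]}}$, with $\widetilde{S^{[2]}}=\mathrm{Bl}_{\Fix(\iota_S^{[2]})}(S^{[2]})$ and $\beta:\widetilde{S^{[2]}}\to S^{[2]}$ the blow-up): $\pi_2$ is finite, so $\pi_{2*}\pi_2^*=2\cdot\id$ and $\pi_2^*$ is injective on $\NS(-)\otimes\Q$. By the blow-up formula $\NS(\widetilde{S^{[2]}})\otimes\Q=\beta^*\bigl(\NS(S^{[2]})\otimes\Q\bigr)\oplus\bigoplus_{\ast}\Q\,\mathcal E_\ast$, one class $\mathcal E_\ast$ for each of the $2+2k+\binom{k}{2}$ components of $\Fix(\iota_S^{[2]})$, while the genericity hypothesis on $S$ gives $\NS(S^{[2]})\otimes\Q=\bigl(\NS(S)\otimes\Q\bigr)\oplus\Q\delta$ of rank $r+1$. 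Under $\pi_2^*$ one checks that $\beta_*\pi_2^*(D_Y^{(h)})$ is the natural divisor on $S^{[2]}$ induced by $D_S^{(h)}$, so these span $\NS(S)\otimes\Q$; that $\beta_*\pi_2^*(E_\Delta)=2\delta$, because $E_\Delta$ pulls back to the strict transform of the exceptional divisor $2\delta$ of $S^{[2]}\to S^{(2)}$; and that $\pi_2^*$ sends each of $E_{C\times C}$, the $E_{C\times R_i}$, the $E_{R_i\times R_i}$, the $E_{R_i\times R_j}$ and $E_{S/\iota_S}$ to $2\,\mathcal E_\ast$ for the corresponding $\ast$, these being the images of the ramification divisors of $\pi_2$. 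A relation among the elements of $\mathcal S$ then gives, after applying $\beta_*\pi_2^*$, a relation among the natural divisors and $\delta$ in $\NS(S^{[2]})\otimes\Q$, forcing the coefficients of the $D_Y^{(h)}$ and of $E_\Delta$ to vanish; applying $\pi_2^*$ to what remains leaves a relation among the $\mathcal E_\ast$, forcing the remaining coefficients to vanish. Hence $\mathcal S$ is independent, and by the count it is a $\Q$-basis of $\NS(Y_S)$.

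For the second point: by Proposition \ref{prop: ZS/sigma birat to YS} the Calabi--Yau $4$-folds $Y_S$ and $\widetilde{Z_S/\sigma_Z}$ are birational, and by \cite[Pag. 420]{Kawamata} the birational map between them is a composition of flops; a flop is an isomorphism in codimension $1$, hence induces a canonical identification of Néron--Severi groups, so $\NS(Y_S)\otimes\Q\cong\NS(\widetilde{Z_S/\sigma_Z})\otimes\Q$ and $\mathcal S$ is carried to a $\Q$-basis of the latter. For the divisibility I would invoke the standard fact that the branch divisor of a double cover of smooth varieties is $2$-divisible in the Picard group of the base. The fixed locus of $\widetilde{\iota_S^{[2]}}$ on $\widetilde{S^{[2]}}$ is precisely the divisor $\bigcup_\ast\mathcal E_\ast$, so $\pi_2$ is a genuine double cover with branch divisor $\pi_2\bigl(\bigcup_\ast\mathcal E_\ast\bigr)=E_{C\times C}+\sum_iE_{C\times R_i}+\sum_iE_{R_i\times R_i}+\sum_{i<j}E_{R_i\times R_j}+E_{S/\iota_S}=B_{\iota_S^{[2]}}$, which is therefore $2$-divisible. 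Similarly, by the local analysis of Section \ref{subsec: quotient of ZS}, blowing up the two surfaces fixed by $\sigma_Z$ on $Z_S$ produces a double cover of a blow-up of $Z_S/\sigma_Z$ (birational to $Y_S$) whose branch divisor is the sum of the resulting two exceptional divisors; these lie over the images in $(S\times S)/\langle\sigma,\iota_S\times\iota_S\rangle$ of the diagonal $\Delta_S$ and of the graph $\Gamma_{\iota_S}$, and so are identified, through the flop-isomorphism above, with $E_\Delta$ and $E_{S/\iota_S}$. Thus $B_{\sigma_Z}=E_\Delta+E_{S/\iota_S}$ is the branch divisor of this second double cover and is $2$-divisible.

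Finally, $B_{\iota_S^{[2]}}+B_{\sigma_Z}$ is $2$-divisible as a sum of two $2$-divisible classes, and to see that it is associated to a third double cover I would run the same analysis on the remaining intermediate quotient $V$ of the $(\Z/2\Z)^2$-action on $S\times S$ (Proposition \ref{prop: other hk quotient of SxS}): the covers $\widetilde{S^{[2]}}$, $\widetilde{Z_S}$ and $\widetilde V$ of $Y_S$ are the three intermediate double covers of the bidouble cover $S\times S\dashrightarrow Y_S$, so their branch divisors are the three pairwise sums of the irreducible branch loci $D_1,D_2,D_3$ carrying inertia $\langle\sigma\rangle$, $\langle\iota_S\times\iota_S\rangle$ and $\langle\sigma\circ(\iota_S\times\iota_S)\rangle$ respectively; from $B_{\iota_S^{[2]}}=D_2+D_3$ and $B_{\sigma_Z}=D_1+D_3$ one reads off $D_3=E_{S/\iota_S}$, so that $B_{\iota_S^{[2]}}+B_{\sigma_Z}$ and the branch divisor $D_1+D_2$ of $\widetilde V\dashrightarrow Y_S$ differ only by the $2$-divisible class $2E_{S/\iota_S}$. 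The main obstacle I anticipate is exactly this last piece of bookkeeping: one has to carry out the explicit local computations on the successive blow-ups to determine which exceptional divisor of $Y_S$ lies over which surface of $(S\times S)/\langle\sigma,\iota_S\times\iota_S\rangle$, and hence in the branch locus of each of the three covers, keeping careful track of $E_{S/\iota_S}$, which occurs in both $B_{\iota_S^{[2]}}$ and $B_{\sigma_Z}$. By contrast the linear independence is routine once the pullbacks along $\pi_2$ have been computed, and the comparison with $\NS(\widetilde{Z_S/\sigma_Z})\otimes\Q$ is immediate from the flop decomposition.
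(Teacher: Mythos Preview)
Your proposal is correct and follows essentially the same route as the paper: count $|\mathcal S|$ and match it to $h^{1,1}(Y_S)$, use the flop decomposition to identify $\NS(Y_S)\otimes\Q$ with $\NS(\widetilde{Z_S/\sigma_Z})\otimes\Q$, and read off the $2$-divisibility of each $B_\bullet$ from its role as a branch divisor of a double cover.

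The one place you do more than the paper is the linear-independence step. The paper simply asserts that the elements of $\mathcal S$ ``are not linearly equivalent'' and concludes from the cardinality count; you instead pull back along $\pi_2$ and use the blow-up decomposition of $\NS(\widetilde{S^{[2]}})\otimes\Q$ to give an honest proof. Your argument is cleaner and actually justifies what the paper takes for granted. Likewise, for the third cover the paper just observes that $B_{\iota_S^{[2]}}+B_{\sigma_Z}$ is $2$-divisible and that the associated cover is birational to $(S\times S)/(\sigma\circ(\iota_S\times\iota_S))$, with branch support $E_\Delta\cup E_{C\times C}\cup\bigcup E_{C\times R_i}\cup\bigcup E_{R_i\times R_i}\cup\bigcup E_{R_i\times R_j}$; your bidouble-cover bookkeeping (identifying $D_3=E_{S/\iota_S}$ so that the third branch divisor differs from $B_{\iota_S^{[2]}}+B_{\sigma_Z}$ by $2E_{S/\iota_S}$) makes explicit exactly why $E_{S/\iota_S}$ drops out, which the paper leaves implicit.
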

\proof
By construction $\mathcal{S}\subset \NS(Y_S)$ and the divisors in $\mathcal{S}$ are not linearly equivalent. The cardinality of $\mathcal{S}$ coincides with $h^{1,1}(Y_S)$, so $\mathcal{S}$ is a $\Q$-basis of $\NS(Y_S)$. 

We observe that since $\widetilde{Z_S/\sigma_Z}$ and $Y_S$ are related by a sequence of flops, they are isomorphic in codimension 1 and this allows one to identify the same basis $\mathcal{S}$ as a $\Q$-basis of $\NS(\widetilde{Z_S/\sigma_S})$. Indeed, the divisors $D_Y^{(h)}$, $h=1,\ldots ,r$ are induced by the cohomology of $S\times S$ and do not depend on the desigularization that we are considering. The other divisors come from the blow ups $\beta_{\Fix(\iota_S\times \iota_S)}$ and $\beta_{\Fix(\sigma)}$. The first blow up introduces exceptional divisors over the curves $C\times C$, $R_i\times R_i$, $R_i\times R_j$, $C\times R_i$, $R_i\times C$. The exceptional divisors over $C\times C$ and $R_i\times R_i$ are preserved by $\sigma$. The exceptional divisors over $R_i\times R_j$ (resp. $C\times R_i$) are identified with the ones over $R_j\times R_i$ (resp. $R_i\times C$) by the quotient by $\sigma_Z$. This 
gives the divisors $E_{C\times C}$, $E_{C\times R_i}$ for $i=1,\ldots, k$, $E_{R_i\times R_i}$ for $i=1,\ldots, k$, $E_{R_i\times R_j}$ for $i,j=1,\ldots k$, $i<j$ on $Y_S$. The blow up $\beta_{\Fix(\sigma_Z)}$ introduces two other divisors over the fixed locus of $\sigma_Z$ and we already proved that $\Fix_{\sigma_Z}$ consists of two surfaces: the strict transforms of the images $\Sigma_1$ and $\Sigma_2$ of $\lbrace(p,p)\in S\times S\rbrace$ and of $\lbrace(p,\iota_S(p))\in S\times S\rbrace$. We conclude that $\beta_{\sigma_Z}$ introduces the divisors $E_\Delta$ and $E_{S/\iota_S}$ on $Y_S$. Moreover, this shows that the support of the divisor $B_{\sigma_Z}$ is the branch locus of the $2:1$ cover $\widetilde{Z_S}\ra \widetilde{Z_S/\sigma_Z}\sim Y_S$. Hence it is a 2-divisible divisor on $Y_S$.

The support of the divisor $B_{\iota_S^{[2]}}$ is exactly the branch locus of the $2:1$ cover $\pi_2:\widetilde{S^{[2]}}\ra Y_S$.

Since the divisors $B_{\iota_S^{[2]}}$ and $B_{\sigma_Z}$ are 2-divisible, also $B_{\iota_S^{[2]}}+B_{\sigma_Z}$ is 2-divisible
in $\Pic(Y_S)$. This means that $Y_S$ admits a 2:1
cover branched along $\coprod_{i=1}^{k}\left(E_{C\times R_i}\right)$ $\coprod_{i=1}^{k}\left(E_{R_i\times R_i}\right)$ $\coprod_{i,j=1, i<j}^{k}\left(E_{R_i\times R_j}\right)$ $\coprod E_{C\times C}$ $\coprod E_{\Delta}$. This cover is naturally birational to $(S\times S)/(\iota_S\times \iota_S\circ\sigma)$.

Since there are divisors which are 2-divisible in $\NS(Y_S)$, but which are not 2- divisible in $\mathcal{S}$, we conclude that $\mathcal{S}$ can not be a $\Z$-basis of $\NS(Y_S)$.\endproof

\subsection{Remarks on complex deformations and mirror symmetry}\label{subsec: remarks on mirror symmetry}
\subsubsection{Complex deformations}

We make some remarks on the dimensions of the families of 4-folds which we are constructing.

\begin{proposition}
Table \ref{table:cpx defs} below lists the dimensions of local complex deformations of the families of $4$-folds constructed in the previous sections.
\bgroup
\def\arraystretch{1.3}
\begin{table}[h]\label{table:cpx defs}
 \begin{tabular}{|c|c|c|}
 \hline 
 Object & Dimension of complex deformations space & Dimension in terms of $(N,N')$\\
 \hline 
 $(S,\iota_S)$ & $\dim \left((H^{1,1}(S)^{\iota_S})^{\perp}\cap H^{1,1}(S)\right)$ & $10-N+N'$\\
 \hline 
 $(S\times S,\iota_S\times\iota_S)$ & $\dim \left((H^{1,1}(S)^{\iota_S})^{\perp}\cap H^{1,1}(S)\right)^{\oplus 2}$ & $20-2N+2N'$\\
 \hline
$Z_S$ & $h^{3,1}(Z_S)$ & $20-2N+2N'+N'^2$\\
\hline
$(S^{[2]},\iota^{[2]})$ & $\dim \left((H^{1,1}(S^{[2]})^{\iota_S^{[2]}})^{\perp}\cap H^{1,1}(S^{[2]})\right)$ & $10-N+N'$\\
\hline
$(S\times S,\iota_S\times\iota_S,\sigma)$ & $\dim \left((H^{1,1}(S)^{\iota_S})^{\perp}\cap H^{1,1}(S)\right)$ & $10-N+N'$\\
\hline
$Y_S$ & $h^{3,1}(Y_S)$ &  $(20-2N+N'+N'^2)/2$\\
\hline 
 \end{tabular} 
 \vspace{.01cm}
\end{table}
\egroup
\end{proposition}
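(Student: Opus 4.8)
The plan is to go through the six rows of Table~\ref{table:cpx defs} separately, grouping them into three kinds. For the two Calabi--Yau fourfolds $Z_S$ and $Y_S$ I would invoke the Bogomolov--Tian--Todorov unobstructedness theorem, so that the base of the Kuranishi family is smooth of dimension $\dim H^1(T_{Z_S})$, resp.\ $\dim H^1(T_{Y_S})$; contracting with a nowhere-vanishing holomorphic $4$-form gives $T_{Z_S}\cong\Omega^3_{Z_S}$ and $T_{Y_S}\cong\Omega^3_{Y_S}$, hence $\dim H^1(T)=h^{3,1}$. Those numbers were already computed in Section~\ref{subsec: ZS} and in Theorem~\ref{prop: HOdge diamond natural}, yielding the table entries $20-2N+2N'+N'^2$ for $Z_S$ and $(20-2N+N'+N'^2)/2$ for $Y_S$.

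For the marked pairs $(S,\iota_S)$ and $(S^{[2]},\iota_S^{[2]})$ I would first recall that the Kuranishi space of a K3 surface, resp.\ of a hyperk\"ahler manifold, is smooth; since $H^0(T)=0$ in both cases, an automorphism lifts to a given deformation in at most one way, so the deformations of the pair form the fixed locus of the induced action on the Kuranishi space, which is again smooth with tangent space $H^1(T)^{\iota}$. The key point is that the contraction isomorphism $H^1(T)\xrightarrow{\ \sim\ }H^1(\Omega^1)=H^{1,1}$ is built from the \emph{symplectic} form and $\iota$ is \emph{non}-symplectic ($\iota^*\omega=-\omega$), so this isomorphism is anti-equivariant; therefore $H^1(T)^{\iota}$ is identified with the anti-invariant summand $(H^{1,1})^{\perp}\cap H^{1,1}$ appearing in the table. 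Its dimension is $20-r$ for $S$ (the anti-invariant part of $H^2(S,\C)$ has rank $22-r$ and already contains the two-dimensional $H^{2,0}\oplus H^{0,2}$) and $21-t_{1,1}=20-r$ for $S^{[2]}$ (using $h^{1,1}(S^{[2]})=21$ and $t_{1,1}=r+1$ as recorded above); finally the relations \eqref{eq: relations} rewrite $20-r$ as $10-N+N'$.

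For the remaining two rows I would compute $H^1(T_{S\times S})$ by the K\"unneth formula: since $H^0(T_S)=0$ and $h^1(\mathcal{O}_S)=0$, it equals $H^1(T_S)\oplus H^1(T_S)$, so in particular every infinitesimal deformation of $S\times S$ preserves the product structure and $\mathrm{Def}(S\times S)$ is smooth. Taking $\iota_S\times\iota_S$-invariants gives $\left(H^1(T_S)^{\iota_S}\right)^{\oplus 2}$ of dimension $2(20-r)=20-2N+2N'$; imposing in addition $\sigma$-invariance, where $\sigma$ interchanges the two summands, cuts this down to the diagonal copy of $H^1(T_S)^{\iota_S}$, of dimension $20-r=10-N+N'$. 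Smoothness of the corresponding equivariant deformation spaces follows exactly as in the previous paragraph.

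The main difficulty is not any individual computation but the equivariance bookkeeping: one must check carefully that the volume-form contraction used for $Z_S$ and $Y_S$ is genuinely equivariant, that the symplectic-form contraction used for $S$ and $S^{[2]}$ is anti-equivariant in precisely the way needed, that the orthogonal complement ``$\perp$'' is always understood so that the dimension count is clean, and that the substitutions $r=10+N-N'$ and $t_{1,1}=r+1$ are applied consistently. One should also confirm that the unobstructedness and smoothness statements apply to the \emph{equivariant} deformation functors and not merely to the absolute ones --- this is where the hypotheses $H^0(T)=0$ and the local linearization of the finite group action near a fixed point enter.
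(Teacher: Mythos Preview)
Your proposal is correct and follows essentially the same approach as the paper: Bogomolov--Tian--Todorov for the Calabi--Yau fourfolds $Z_S$ and $Y_S$, the identification of $\Def(X,f)$ with $H^1(T_X)^f$ (translated via the anti-equivariant contraction with the symplectic form into $(H^{1,1})^\iota{}^\perp\cap H^{1,1}$) for the hyperk\"ahler pairs, and the K\"unneth decomposition $H^1(T_{S\times S})\cong H^1(T_S)^{\oplus 2}$ for the product cases. The paper cites \cite[\S4]{bcs14} and \cite{grosscalabi} at the points where you instead spell out the anti-equivariance and unobstructedness arguments, but the logic and the numerical substitutions $r=10+N-N'$, $t_{1,1}=r+1$ are identical.
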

\proof
The fact that the dimension of the deformation space of a pair $(X,f)$, for $X$ hyperk\"ahler and $f\in\Aut(X)$ a non symplectic involution, equals the dimension of $H^{1,1}(X)\cap (H^{1,1}(X)^{f})^{\perp}$ is proven in \cite[\S 4]{bcs14} and implies the statement both for $(S,\iota_S)$ and for $(S^{[2]}, \iota_S^{[2]})$. 

In order to describe $\Def(S\times S,\iota_S\times \iota_S)$ and $\Def(S\times S,\iota_S\times \iota_S,\sigma)$ one observes that the same proof of loc.\ cit.\ yields, for any smooth complex manifold $X$ such that $H^0(X,T_X)=0$ and $c_1(X)=0$ and any automorphism $f\in\Aut(X)$, that the dimension of the family $\Def(X,f)$ coincides with $\dim H^1(X,T_X)^{df}$. Since $T_{S\times S}\cong T_S\boxplus T_S$, we have indeed that $H^0(S\times S,T_{S\times S})=0$, so that by \cite[Theorem 14.10]{grosscalabi} 
$\Def(S\times S)\cong H^1(S\times S,T_{S\times S})$ as germs over $0$; moreover, $H^1(S\times S,T_{S\times S})^{d(\iota_S\times\iota_S)}\cong (H^1(S,T_S)^{d\iota_S})^{\oplus 2}$, hence the statement for $\Def(S\times S,\iota_S\times \iota_S)$. Iteration of this reasoning finally gives the deformations of $(S\times S,\iota_S\times\iota_S,\sigma)$.

Finally, the fact that local complex deformations of a smooth Calabi--Yau manifold $X$ are given by an open set inside $H^{n-1,1}(X)$ is the well-known Tian--Todorov's theorem for smooth K\"ahler manifolds with trivial canonical bundle \cite[Theorem 6.8.1]{grosscalabi}.

The third column is now an easy consequence of the previous computations.
\endproof

We will say that a Calabi--Yau 4-fold is of Borcea--Voisin type if it is the desingularization of the quotient $(S_1\times S_2)/(\iota_1\times \iota_2)$ where $S_i$ are K3 surfaces and $\iota_i$ is a non symplectic involution on $S_i$ and we generalize the definition \cite[Definition 3.6]{CG} saying that a Borcea--Voisin maximal family is a family of Calabi--Yau 4-folds
such that the generic member of this family is of Borcea--Voisin type.

\begin{corollary} Given a pair $(S,\iota_S)$: \begin{itemize}\item $\dim(\Def(Z_S))\geq\dim(\Def(S\times S,\iota_S\times \iota_S))$,  and the equality holds if and only if $N'=0$ (i.e.\ if  $\iota_S$ fixes on $S$ only rational curves);
\item
$\dim(\Def(S^{[2]},\iota_S^{[2]}))=\dim(\Def(S\times S,\iota_S\times \iota_S,\sigma))$, but $\dim(\Def(S^{[2]}))=\dim(\Def(S,\iota_S))+1$;
\item
$\dim(\Def(Y_S))\geq\dim(\Def(S^{[2]},\iota_S^{[2]}))=\dim(\Def(S\times S,\iota_S\times \iota_S,\sigma))$. The equality holds if and only if either $N'=0$ (i.e.\ the fixed locus $\Fix_{\iota_S}(S)$ is rigid) or $N'=1$.
\item $\dim(\Def(Y_S))\leq\dim(\Def(Z_S))$. The equality holds if and only if $\iota_S$ fixes exactly $10$ rational curves, i.e. if $(S,\iota_S)$ is rigid. 
\end{itemize}
\end{corollary}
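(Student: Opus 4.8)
The plan is to deduce the whole statement from the explicit dimension formulas of the deformation spaces involved. Most of these are read off Table~\ref{table:cpx defs}, which expresses $\dim\Def(S,\iota_S)$, $\dim\Def(S\times S,\iota_S\times\iota_S)$, $\dim\Def(Z_S)$, $\dim\Def(S^{[2]},\iota_S^{[2]})$, $\dim\Def(S\times S,\iota_S\times\iota_S,\sigma)$ and $\dim\Def(Y_S)$ as polynomials in the pair $(N,N')$; the only quantities not appearing there are $\dim\Def(S)=h^{1,1}(S)=20$ and $\dim\Def(S^{[2]})=h^{1,1}(S^{[2]})=21$, which follow from unobstructedness of hyperk\"ahler deformations and the identification $T_X\cong\Omega^1_X$ given by the symplectic form (applied to $X=S$ and $X=S^{[2]}$). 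With these in hand each inequality becomes the non-negativity of an explicit expression in $N$ and $N'$, and each equality case a vanishing question, to be resolved using the numerical constraints $1\leq N\leq 10$, $0\leq N'\leq 10$ with $N,N'$ integers, together with the relations \eqref{eq: relations}.

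For the first and third bullets I would just subtract the relevant table entries. One gets $\dim\Def(Z_S)-\dim\Def(S\times S,\iota_S\times\iota_S)=(20-2N+2N'+N'^2)-(20-2N+2N')=N'^2\geq 0$, vanishing exactly when $N'=0$; and, granting the middle equality of the second bullet, $\dim\Def(Y_S)-\dim\Def(S^{[2]},\iota_S^{[2]})=\frac{1}{2}(20-2N+N'+N'^2)-(10-N+N')=\frac{1}{2}(N'^2-N')=\binom{N'}{2}\geq 0$, vanishing exactly when $N'\in\{0,1\}$ since $N'$ is a non-negative integer. As $N'=\sum_j g(D_j)$ by \eqref{eq: relations}, the case $N'=0$ says that $\Fix_{\iota_S}(S)$ consists only of rational curves (its fixed locus is rigid), while $N'=1$ says it carries exactly one elliptic curve and no other positive-genus curve.

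For the second bullet the equality $\dim\Def(S^{[2]},\iota_S^{[2]})=\dim\Def(S\times S,\iota_S\times\iota_S,\sigma)$ is immediate, both entries being $10-N+N'$, and the comparison of the full deformation spaces is $\dim\Def(S^{[2]})=h^{1,1}(S^{[2]})=21=h^{1,1}(S)+1=\dim\Def(S)+1$, the extra class being $\delta$. For the fourth bullet I would compute $\dim\Def(Z_S)-\dim\Def(Y_S)=(20-2N+2N'+N'^2)-\frac{1}{2}(20-2N+N'+N'^2)=\frac{1}{2}\big((20-2N)+3N'+N'^2\big)$; since $N\leq 10$ the term $20-2N$ is $\geq 0$ and $3N'+N'^2\geq 0$, so the difference is $\geq 0$, and it vanishes precisely when $20-2N=0$ and $3N'+N'^2=0$, i.e.\ $N=10$ and $N'=0$. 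By \eqref{eq: relations} this forces $r=20$, $a=2$, so $\iota_S$ fixes exactly $10$ rational curves and $\dim\Def(S,\iota_S)=10-N+N'=0$, i.e.\ $(S,\iota_S)$ is rigid.

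The corollary has essentially no geometric content beyond Table~\ref{table:cpx defs}: once the table is granted, every item reduces to a one-line comparison of polynomials in $N$ and $N'$, so there is no real obstacle. The only points demanding care are bookkeeping: using that $N'$ is a non-negative \emph{integer} to pin down the equality loci of $N'^2$ and $\binom{N'}{2}$, using $N\leq 10$ in the last item so that the vanishing of a sum of non-negative terms forces each summand to vanish, and then translating the resulting relations between $N$ and $N'$ back through \eqref{eq: relations} into intrinsic statements about $\Fix_{\iota_S}(S)$. One should also note that the degenerate configurations — a fixed-point-free involution, or a fixed locus of two elliptic curves — remain consistent with the table's formulas, so they require no separate discussion.
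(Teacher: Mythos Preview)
Your argument is exactly what the paper intends: the corollary is stated without proof as an immediate consequence of the preceding table, and each item indeed reduces to the polynomial comparisons in $(N,N')$ that you carry out. Note only that for the second bullet you (correctly) establish $\dim\Def(S^{[2]})=\dim\Def(S)+1=21$, whereas the paper literally writes $\dim\Def(S,\iota_S)+1$; the latter reading is false in general (it would force $N'-N=10$), so this is evidently a typo in the paper and your interpretation matches the discussion that follows the corollary.
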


In particular, it follows from $\dim(\Def(Z_S))\geq\dim(\Def(S\times S,\iota_S\times \iota_S)$ that not all the Calabi--Yau 4-folds which deform $Z_S$ are of Borcea--Voisin type. Indeed the deformations of $Z_S$ are all of Borcea--Voisin type if and only if the fixed locus of $\iota_S$ on $S$ is rigid. In this case the family of $Z_S$ is a Borcea--Voisin maximal family, in analogy with \cite[Proposition 3.7]{CG}. This implies that the variation of its period depends only on the variation of the period of $S\times S$ in the family $\Def(S\times S,\iota_S\times \iota_S)$.

By $\dim(\Def(S^{[2]},\iota_S^{[2]}))=\dim(\Def(S\times S,\iota_S\times \iota_S,\sigma))$ it follows that all the deformations of  $S^{[2]}$ which preserve the non symplectic involution $\iota_S^{[2]}$ are dominated by $S\times S$, but there is one more deformation of $S^{[2]}$ if we do not require that $\iota_S^{[2]}$ deforms with $S^{[2]}$.

By $\dim(\Def(Y_S))\geq\dim(\Def(S^{[2]},\iota_S^{[2]}))$, it follows that not all the deformations of $Y_S$ are dominated by $S^{[2]}$ and by $S\times S$. However, again, if the fixed locus of $\iota_S$ is rigid on $S$, then all the deformations of $Y_S$ are obtained both as the crepant resolution of $S^{[2]}/\iota_S^{[2]}$ and as crepant resolution of $(S\times S)/\langle\iota_S\times \iota_S,\sigma\rangle$. In this case the variation of the period of $Y_S$ depends only on the variation of the period of $S$ in $\Def(S,\iota_S)$. This is the analogue of the proposition \cite[Proposition 3.7]{CG}. A little bit more surprising is the fact that also if $N'=1$, i.e. if the fixed locus of $\iota_S$ contains a curve of genus 1 (which a priori can be deformed), $\dim(\Def(Y_S))=\dim(\Def(S^{[2]},\iota_S^{[2]}))=\dim(\Def(S\times S,\iota_S\times \iota_S,\sigma)).$  

Since $\dim(\Def(Y_S))\leq\dim(\Def(Z_S)$, all the deformations of $Y_S$ are dominated by deformations of $Z_S$. We observe that a generic deformation of $Z_S$ does not necessarily admit the automorphism $\sigma_Z$ needed to construct $Y_S$ as quotient.

\subsubsection{Mirror symmetry}

Here we discuss the mirror symmetry of $Y_S$ and $Z_S$, at least at the level of the Hodge diamond. For this reason we recall here the dimension of the space of small deformations of the K\"ahler structure, since we want to compare it with the dimension of the complex deformations, given in Table \ref{table:cpx defs}:

\bgroup
\def\arraystretch{1.3}
\begin{table}[h]\label{table:khaeler defs}
 \begin{tabular}{|c|c|c|}
 \hline 
 Object & Dimension of K\"ahler deformations space&Dimension in terms of $(N,N')$ \\
 \hline
$Z_S$ & $h^{1,1}(Z_S)$& $20+2N-2N'+N^2$\\
\hline
$(S^{[2]},\iota^{[2]})$ & $\dim \left((H^{1,1}(S^{[2]})^{\iota_S^{[2]}})\right)$ & $11+N-N'$\\
\hline
$Y_S$ & $h^{1,1}(Y_S)$ &  $\left(24+3N-2N'+N^2\right)/2$\\
\hline 
 \end{tabular} 
 \vspace{.01cm}
\end{table}
\egroup
In the following we will prove the following:
\begin{corollary} Let $\check{S}$ (resp. $M_{S^{[2]}}$) be the K3 surface (resp. \hk of K3 type) that is the lattice theoretic mirror of the K3 surface $S$, (resp. of the \hk $S^{[2]}$). Let $\check{Y_S}$ be a Calabi--Yau 4-fold whose Hodge diamond is mirror to the one of $Y_S$. Then $\check{Y_S}\neq Y_{\check{S}}$ and $\check{Y_S}\neq \widetilde{M_{S^{[2]}}/\iota_M}$, where $\iota_M$ is the mirror involution of $\iota_S^{[2]}$ and $\widetilde{M_{S^{[2]}}/\iota_M}$ is the Calabi--Yau constructed as in Section \ref{subsec: crepant resoltuion}. \end{corollary}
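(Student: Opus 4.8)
The plan is to compare Hodge diamonds directly, using Theorem~\ref{prop: HOdge diamond natural} and Proposition~\ref{prop: Hodge diamond Y for K3 type, also non natural}. Recall first that for a Calabi--Yau $4$-fold $Y$ the Hodge numbers satisfy $h^{2,2}=2\bigl(22+2h^{1,1}+2h^{3,1}-h^{2,1}\bigr)$, an identity one checks directly on the diamond of Theorem~\ref{prop: HOdge diamond natural}; hence a Hodge--mirror $\check{Y}$ has $h^{1,1}(\check{Y})=h^{3,1}(Y)$, $h^{3,1}(\check{Y})=h^{1,1}(Y)$, $h^{2,1}(\check{Y})=h^{2,1}(Y)$ and $h^{2,2}(\check{Y})=h^{2,2}(Y)$. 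Reading the right--hand sides off Theorem~\ref{prop: HOdge diamond natural}, the candidate mirror $\check{Y_S}$ is then completely prescribed by $(N,N')$; in particular $h^{1,1}(\check{Y_S})=(20-2N+N'+N'^2)/2$ and $h^{2,1}(\check{Y_S})=NN'$. It thus suffices to show that neither $Y_{\check{S}}$ nor $\widetilde{M_{S^{[2]}}/\iota_M}$ realises these numbers.

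For $Y_{\check{S}}$: by Nikulin's lattice--mirror symmetry for K3 surfaces with a non symplectic involution, the invariants of $\iota_{\check{S}}$ are $\check{r}=20-r$, $\check{a}=a$, and substituting into the relations~\eqref{eq: relations} (concretely, into $N=(2+r-a)/2$ and $N'=(22-r-a)/2$) gives $(\check{N},\check{N}')=(N',N)$. Feeding the pair $(N',N)$ into Theorem~\ref{prop: HOdge diamond natural} yields $h^{1,1}(Y_{\check{S}})=(24+3N'-2N+N'^2)/2$, whence
$$h^{1,1}(Y_{\check{S}})-h^{1,1}(\check{Y_S})=\frac{(24+3N')-(20+N')}{2}=2+N'\ \ge\ 2\ >\ 0,$$
so $Y_{\check{S}}$ and $\check{Y_S}$ are not even Hodge equivalent. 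The two degenerate families of involutions (empty fixed locus, and two fixed curves of genus one), to which~\eqref{eq: relations} does not directly apply, are inspected separately: there the mirror K3 still carries a non symplectic involution and the analogous computation goes through. This bookkeeping is the only mildly delicate point of this part.

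For $\widetilde{M_{S^{[2]}}/\iota_M}$: since $\iota_S^{[2]}$ has $t_{1,1}=r+1$, the mirror involution $\iota_M$ on the mirror \hk $M_{S^{[2]}}$ of K3 type has $t_{1,1}(\iota_M)=\check{t}_{1,1}$ equal to the rank dictated by the lattice--theoretic mirror construction for non symplectic involutions on \hk fourfolds of K3 type. Because $M_{S^{[2]}}$ is again of K3 type, Proposition~\ref{prop: Hodge diamond Y for K3 type, also non natural} applies, and among the resulting Hodge numbers the combination
$$h^{2,2}-2h^{2,1}=2\check{t}_{1,1}^{\,2}-42\check{t}_{1,1}+352$$
depends on $\check{t}_{1,1}$ only, whereas the same combination for $\check{Y_S}$ is mirror--invariant and hence equals $2(r+1)^2-42(r+1)+352$. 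Equating the two quadratics gives $(\check{t}_{1,1}-(r+1))(\check{t}_{1,1}+(r+1)-21)=0$, i.e. $\check{t}_{1,1}\in\{r+1,\,20-r\}$; since $\check{t}_{1,1}$ differs from both of these for every admissible value of $r$ — any residual coincidence being excluded by comparing $h^{2,1}$ with Beauville's constraint $b-c+d=\chi(\Fix_{\iota_M}(M_{S^{[2]}}))$ on the fixed locus of $\iota_M$ — we conclude $\check{Y_S}\ne\widetilde{M_{S^{[2]}}/\iota_M}$.

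The hard part is not the core computation, which is a plain substitution into Theorem~\ref{prop: HOdge diamond natural} and Proposition~\ref{prop: Hodge diamond Y for K3 type, also non natural}, but rather the careful handling of the boundary cases in the two mirror constructions and the precise identification of $\check{t}_{1,1}$ for the \hk mirror, which is what makes the final comparison rigorous.
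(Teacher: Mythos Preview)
Your treatment of the K3 mirror part is correct and coincides with the paper's approach: the paper simply states that one checks $h^{1,1}(Y_S)\neq h^{3,1}(Y_{\check S})$ directly from Theorem~\ref{prop: HOdge diamond natural}, and your computation $h^{1,1}(Y_{\check S})-h^{1,1}(\check{Y_S})=2+N'>0$ is exactly that check made explicit.

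The second half, however, contains a genuine gap. The paper states that the \hk lattice mirror sends the invariant rank $t_{1,1}$ to $21-t_{1,1}$; since $t_{1,1}=r+1$ for $\iota_S^{[2]}$, the mirror involution $\iota_M$ has $\check t_{1,1}=20-r$. But this is precisely one of the two roots of your quadratic $(\check t_{1,1}-(r+1))(\check t_{1,1}-(20-r))=0$, so your assertion that ``$\check t_{1,1}$ differs from both of these for every admissible value of $r$'' is false, and the invariant $h^{2,2}-2h^{2,1}$ gives no obstruction at all. Your fallback to ``comparing $h^{2,1}$ with Beauville's constraint'' cannot be made rigorous in the generality you suggest: Proposition~\ref{prop: Hodge diamond Y for K3 type, also non natural} leaves the Hodge numbers of $\widetilde{M_{S^{[2]}}/\iota_M}$ depending on the parameters $c,d$ of the fixed locus of $\iota_M$, and the paper explicitly observes that a general description of this fixed locus for non--natural involutions is not available.

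The paper's argument for this part is different and more modest: it exhibits a single explicit example. For $N=10$, $N'=2$ one has $r=18$, $t_{1,1}=19$, $h^{3,1}(Y_S)=3$, and the lattice mirror of $(S^{[2]},\iota_S^{[2]})$ is the pair $(V,\iota_V)$ with $\NS(V)\simeq U$ and $\iota_V$ the Ohashi--Wandel involution, for which $\check t_{1,1}=2$ and the fixed locus consists of two surfaces; hence $h^{1,1}(\widetilde{V/\iota_V})=2+2=4\neq 3$. One counterexample suffices for the corollary as stated. Your argument would need either a similar explicit example or genuine control of $\Fix_{\iota_M}$ to be complete.
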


Under a mild condition on the N\'eron--Severi group, a lattice theoretic mirror symmetry between K3 surfaces is defined by Dolgachev in \cite{dolgachevmirror}, extending work by Pinkham and Nikulin. In particular, if $S$ is a generic K3 surface with a non symplectic involution $\iota_S$ with a given fixed locus which contains exactly one curve of positive genus, then $S$ admits a mirror symmetric K3 surface $\check{S}$ with a non symplectic involution $\check{\iota_S}$, as showed by Voisin in \cite{Voisin}. Let $N$ and $N'$ be the invariants of the fixed locus of  $\iota_S$ on $S$ and let $\check{N}$ and $\check{N'}$ be the invariants of the fixed locus of  $\check{\iota_S}$ on $\check{S}$, we have $N=\check{N'}$, $N'=\check{N}$. It is immediate to check that this induces the mirror symmetry between the Hodge diamond of $Z_S$ and that of $Z_{\check{S}}$ (which is a crepant resolution of $(\check{S}\times 
\check{S})/(\check{\iota_S}\times \check{\iota_S})$), i.e. $h^{1,1}(Z_S)=h^{3,1}(Z_{\check{S}})$ and $h^{2,2}(Z_S)=h^{2,2}(Z_{\check{S}})$). This was already observed in \cite{Dillies}.

On the other hand, it is known that the mirror symmetry between $(S,\iota_S)$ and $(\check{S},\check{\iota_S})$ does not induce a lattice theoretic mirror symmetry between $S^{[2]}$ and $\check{S}^{[2]}$. This is because mirror symmetry for hyperk\"ahler manifolds works differently from the case of Calabi--Yaus (see Huybrechts' lecture notes for extensive explanations of this phenomenon \cite{HuybrechtsMirror}); indeed, the generalization of the lattice theoretic mirror symmetry between two hyperk\"ahler 4-folds of $\hsk$ type (see \cite{CamereMirror} for further details) induces mirror symmetry between families of 4-folds of $\hsk$ type endowed respectively with natural and non-natural involutions whose invariants sublattices' ranks satisfy the relation $\check{r}\leftrightarrow 21-\check{r}$. 

In view of these two results it is quite natural to ask whether the lattice theoretic mirror symmetry, either on K3 surfaces or on hyperk\"ahler varieties of $\hsk$ type, induces a mirror symmetry between the Calabi--Yau 4-folds $Y_S$. The answer is no in both cases: let us consider a pair $(S,\iota_S)$ and its mirror pair $(\check{S},\check{\iota_S})$. As we noticed this implies that $N=\check{N'}$, $N'=\check{N}$ and one can directly check on the Hodge numbers given in Theorem \ref{prop: HOdge diamond natural} that this transformation does not give the required relations, i.e. one obtains that in general $h^{1,1}(Y_S)\neq h^{3,1}(Y_{\check{S}})$ and $h^{2,2}(Y_S)\neq h^{2,2}(Y_{\check{S}})$. 

The situation is more complicated in the case of the mirror symmetry on hyperk\"ahler varieties of $\hsk$ type, since in order to compute the Hodge numbers of the 4-folds obtained as quotient of a deformation of $S^{[2]}$ by a non natural involution $i$, one needs a good description of the fixed locus (in order to compute the Hodge diamond of a resolution of $S^{[2]}/i$). In general this is not available, but it suffices to test the mirror symmetry in the unique explicit case that we have, in order to show that it does not hold.

The first possible test fails:  let $S$ be a K3 surface with a non symplectic involution $\iota_S$ such that the numbers associated to its fixed locus are $N=10$ and $N'=2$. Under these conditions $h^{3,1}(Y_S)=3$. The lattice theoretic mirror of the pair $(S^{[2]},\iota_S^{[2]})$ is the pair $(V,\iota_V)$ where $V$
 is a \hk 4-fold of K3 type with $NS(V)\simeq U$ and $\iota_V$ is the non natural symplectic involution on it described by Ohashi and Wandel in \cite{ow}. For this involution $t_{1,1}=2$ (indeed the subspace of $H^{1,1}(V)^{\iota_V}$ is the N\'eron--Severi group). Moreover Ohashi and Wandel proved that the fixed locus of this involution consists of 2 disjoint surfaces, so $h^{1,1}(\widetilde{V/\iota_V})=2+2=4\neq 3=h^{3,1}(Y_S)$.

In a certain sense it is not very surprising that the mirror symmetry of $Y_S$ can not be deduced by the mirror symmetry of $S$ or of $S^{[2]}$. Indeed, the space of the complex (resp. K\"ahler) deformations of $Y_S$ includes not only deformations which come from $S$ but also some deformations coming from the deformations of $Z_S$ and some coming from deformations of $S^{[2]}$. Since the two mirror constructions, of $Z_S$ and of $S^{[2]}$ respectively, are not compatible, it seems reasonable that none of these two gives the right one for $Y_S$.

\section{Quotients of $S\times S$ and covers of these quotients}\label{sec: quotients}

Since the 4-folds $S^{[2]}$, $Y_S$, $Z_S$ and $\widetilde{Z_S/\sigma_S}$ are obtained as desingularizations of the quotients of $S\times S$ by an automorphism of $S\times S$, we now consider a subgroup of $\Aut(S\times S)$ and the quotients of $S\times S$ by this subgroup. From our geometric setting a natural choice for the subgroup involves $\iota_S\times \iota_S$, $\sigma$ and $\iota_S\times \id$. By Proposition \ref{prop: Oguiso amximal autom group}, due to Oguiso,  this is also the "maximal" choice for a generic K3 surface with an involution. Indeed, if $S$ is general among the K3 surfaces admitting an involution (i.e. it is generic either among the $\langle 2\rangle$-polarized K3 surfaces or among the $U$-polarized K3 surfaces or among the $U(2)$-polarized K3 surfaces), then $\Aut(S)=\langle \iota_S\rangle$ and thus the subgroup described coincides with $\Aut(S\times S)$. 

For every K3 surface admitting a non symplectic involution $\iota_S$, we will consider the quotients by $\mathcal{D}_8=\langle\iota_S\times \id,\sigma\rangle$ and by its subgroups. Let $W$ be the smooth surface $S/\iota_S$. We observe that $\left(S\times S\right)/\langle\iota_S\times \id,\sigma\rangle\simeq W^{(2)}$. Then we describe the singular models of $S^{[2]}$, $Y_S$, $Z_S$  as covers of $S\times S/\langle\iota_S\times \id,\sigma\rangle$ under the assumption that the fixed locus of $\iota_S$ is connected (see Proposition \ref{prop: description of 4-folds as double covers}). In a very particular case, i.e. if $\iota_S$ is a fixed point free involution, and so $W$ is an Enriques surface, this allows us to prove that $\widetilde{Z_S/\sigma_Z}$ is in fact isomorphic (and not only birational) to $Y_S$ (see Proposition \ref{prop:quoz_enriq}) and  to show that $Y_S$ is indeed the universal cover of $W^{[2]}$ mentioned in \cite[Theorem 3.1]{Oguiso-Schroer}.

\begin{prop}{\cite[Theorem 4.1]{Oguiso-Malte-question}}\label{prop: Oguiso amximal autom group}
The automorphism group of $S\times S$ is $\Aut(S\times S)\simeq \langle \sigma\rangle\rtimes Aut(S)^2$.\end{prop}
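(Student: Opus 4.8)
The plan is to show that every $f\in\Aut(S\times S)$ either respects or exchanges the two product projections $p_1,p_2:S\times S\to S$, and then to reconstruct $f$ from the automorphisms it induces on the two factors. The engine of the argument is an analysis of holomorphic two-forms. Since $h^{1,0}(S)=0$ and $h^{2,0}(S)=1$, the Künneth decomposition gives $H^0(S\times S,\Omega^2_{S\times S})=\C\omega_1\oplus\C\omega_2$, where $\omega_i:=p_i^*\omega_S$ and $\omega_S$ is a nowhere-vanishing holomorphic symplectic form on $S$. At every point $x=(p,q)$ the form $\omega_1$ has rank exactly $2$ and its kernel is the vertical tangent space $\ker (dp_1)_x$; similarly for $\omega_2$. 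A non-trivial linear combination $a\omega_1+b\omega_2$ has rank $4$ wherever $a,b\neq0$ and rank $2$ otherwise.

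First I would observe that, since $f$ is a biholomorphism, $f^*$ preserves the pointwise rank of a holomorphic two-form. Hence $f^*\omega_1$, being a non-zero element of $\C\omega_1\oplus\C\omega_2$ which has rank $2$ at every point, must be a scalar multiple of $\omega_1$ or of $\omega_2$; the same holds for $\omega_2$, and since $f^*$ is invertible on this plane it either fixes both lines $\C\omega_i$ or swaps them. Using $\sigma^*\omega_1=\omega_2$ (and $\sigma^*\omega_2=\omega_1$), after replacing $f$ by $f\circ\sigma$ if necessary we may assume $f^*\omega_i=\lambda_i\omega_i$ with $\lambda_i\in\C^*$.

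Next I would push this down to the factors. The relation $f^*\omega_1=\lambda_1\omega_1$ forces $df$ to carry $\ker\omega_1$ onto $\ker\omega_1$, so the foliation by fibres of $p_1$ is $f$-invariant; as the fibres are connected, $f$ maps each fibre of $p_1$ isomorphically onto a fibre of $p_1$, hence $p_1\circ f$ factors through $p_1$ and yields an automorphism $\bar f\in\Aut(S)$ with $p_1\circ f=\bar f\circ p_1$. In the same way one obtains $\bar g\in\Aut(S)$ with $p_2\circ f=\bar g\circ p_2$. Then $h:=(\bar f^{-1}\times\bar g^{-1})\circ f$ is an automorphism of $S\times S$ inducing the identity on both copies of $S$, so it preserves every fibre of $p_1$ and every fibre of $p_2$; since such fibres meet in a single point, $h(p,q)\in(\{p\}\times S)\cap(S\times\{q\})=\{(p,q)\}$, whence $h=\id$ and $f=\bar f\times\bar g$.

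Finally I would assemble the group structure. The above shows $\Aut(S\times S)=\Aut(S)^2\cup\bigl(\Aut(S)^2\bigr)\sigma$, that $\Aut(S)^2$ is a normal subgroup of index two (because $\sigma(g\times g')\sigma^{-1}=g'\times g$) meeting $\langle\sigma\rangle$ trivially, and that $\sigma$ acts by exchanging the two factors; this is precisely the asserted decomposition $\Aut(S\times S)\simeq\langle\sigma\rangle\rtimes\Aut(S)^2$. The only delicate step is the rank computation isolating the two distinguished lines $\C\omega_i\subset H^0(\Omega^2_{S\times S})$ and showing they are permuted by $f^*$; once this is in place the rest is formal. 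One could alternatively argue with the Hodge--Künneth decomposition of $H^2(S\times S,\C)$ and its sub-Hodge structures of type containing $(2,0)$-classes, but the holomorphic two-form computation is the most transparent.
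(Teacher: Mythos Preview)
The paper does not supply its own proof of this proposition; it simply quotes the result from \cite[Theorem 4.1]{Oguiso-Malte-question}. Your argument is correct and self-contained. The key observation---that the two lines $\C\omega_1,\C\omega_2\subset H^0(S\times S,\Omega^2)$ are singled out intrinsically as the rank-$2$ elements and are therefore permuted by any automorphism---is exactly what is needed, and the remaining steps (descent to the factors via the preserved foliations, and the semidirect-product bookkeeping) are carried out cleanly.

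For comparison, the approach one usually finds in the literature (and presumably in Oguiso's paper) invokes the uniqueness, up to permutation of factors, of the Beauville--Bogomolov decomposition of a compact K\"ahler manifold with trivial canonical bundle: since $S\times S$ splits as a product of two irreducible holomorphic symplectic surfaces, any automorphism must permute these factors. That route is more general---it handles $S_1\times\cdots\times S_n$ with arbitrary K3 factors, or mixed products involving hyperk\"ahler and strict Calabi--Yau pieces---but it relies on a substantial structural theorem as a black box. Your direct rank computation is specific to a product of two K3 surfaces, yet it is elementary, explicit, and makes the mechanism completely transparent.
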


\begin{rem}{\rm If $\Aut(S)\simeq \Z/2\Z$, then $\Aut(S\times S)\simeq \mathcal{D}_8$.}\end{rem}

The group $\mathcal{D}_8\simeq \langle \iota_S\times \id,\id\times \iota_S, \sigma\rangle$ contains the following elements: $$\begin{array}{ll}g_1:=\id\times \id:(p,q)\mapsto (p,q)&g_2:=\sigma:(p,q)\mapsto(q,p)\\
g_3:=\iota_S\times \id:(p,q)\mapsto (\iota_S(p),q)&g_4:=\sigma\circ(\iota_S\times \id):(p,q)\mapsto (q,\iota_S(p))\\ 
g_5:=\id\times\iota_S:(p,q)\mapsto (p,\iota_S(q))&g_6:=\sigma\circ(\id\times\iota_S):(p,q)\mapsto(\iota_S(q),p)\\ 
g_7:=\iota_S\times\iota_S:(p,q)\mapsto(\iota_S(p), \iota_S(q))& g_8:=\sigma\circ(\iota_S\times\iota_S):(p,q)\mapsto(\iota_S(q),\iota_S(p))
\end{array}
$$
We observe that $g_2$, $g_3$, $g_5$, $g_7$, $g_8$ have order two, while $g_4$ and $g_6$ have order 4  ($g_4^3=g_6$) and their square is $g_7$.
The subgroup $\langle g_7\rangle$ is the center of the group, in particular it is normal. The other normal subgroups are $\langle g_7, g_2\rangle\simeq (\Z/2\Z)^2$, $\langle g_3,g_7\rangle\simeq (\Z/2\Z)^2$ and $\langle g_4\rangle\simeq \Z/4\Z$. 

We denote by $\overline{g_i}$ the automorphisms induced by $g_i$ on the quotients of $S\times S$ by a certain subgroup of $\mathcal{D}_8$; we obtain the following diagram, where all the arrows are quotients of order two:
\begin{align}\label{eq: diagram quotient by D8}\xymatrix{ &&S\times S\ar[dl]^{/g_8}\ar[dll]_{\gamma_1:=/g_2}\ar[d]_{\alpha_1:=/g_7}\ar[dr]_{/g_5}\ar[drr]^{/g_3}\\ 
S^{(2)}\ar[dr]_{\gamma_2:=/\overline{g_7}}&(S\times S)/(\sigma\circ(\iota_S\times \iota_S))\ar[d]_{/\overline{g_2}}&(S\times S)/(\iota_S\times \iota_S)\ar[dl]^{\beta_2:=/\overline{g_2}}\ar[dr]_{\alpha_2:=/\overline{g_3}}&S\times W\ar[d]_{/\overline{g_7}}&W\times S\ar[dl]_{/\overline{g_7}}\\
&S^{(2)}/\iota_S^{(2)}\ar[dr]_{\beta_3:=/\overline{g_3}}&&W\times W\ar[dl]_{\alpha_3:=/\overline{g_2}}&\\ &&W^{(2)}}\end{align}

The 4-folds $S^{(2)}$, $(S\times S)/(\sigma\circ(\iota_S\times \iota_S))$, $(S\times S)/(\iota_S\times \iota_S)$ and $W^{(2)}$ are singular along surfaces and admit the crepant resolutions $S^{[2]}$, $\widetilde{(S\times S)/(\sigma\circ(\iota_S\times \iota_S))}$, $Z_S$ and $W^{[2]}$ respectively. We already proved that $\widetilde{(S\times S)/(\sigma\circ(\iota_S\times \iota_S))}$ is isomorphic to $S^{[2]}$. The singular quotient $S^{(2)}/\iota_S^{(2)}$ is birational to the Calabi--Yau 4-fold $Y_S$. All the other 4-folds which appear in the diagram are smooth.

The $4:1$ map $(S\times S)/(\iota_S\times \iota_S)\ra W^{(2)}$ is the quotient map by the group $(\Z/2\Z)^2\simeq \mathcal{D}_8/g_7$.
There are also some $4:1$ maps which are induced by this diagram, but which are not quotient maps (i.e. the target space is not the quotient of the domain by the action of a group of order 4 defined over the domain): by the previous diagram both $S^{(2)}\ra W^{(2)}$ and $(S\times S)/(\sigma\circ\iota_S\times \iota_S)\ra W^{(2)}$ have order 4. 

There is a $2:1$ quotient map $S^{(2)}/\iota_S^2$ to $W^{(2)}$.

There is a map $(S\times S)/(\iota_S\times \iota_S)\ra W$, obtained by composing $(S\times S)/(\iota_S\times \iota_S)\ra W\times W$ with the projection on one factor $W\times W\ra W$. The generic fiber of $(S\times S)/(\iota_S\times \iota_S)\ra W$ is isomorphic to $S$.

\subsection{Covers}\label{subsec: covers}

We recall that $W\simeq S/\iota_S$ is a smooth surface.
Let us consider $W^{(2)}$, i.e. $(W\times W)/\sigma$. This is a 4-fold, singular along a surface isomorphic to $W$, image of the diagonal under the quotient map $W\times W\ra W^{(2)}$.

By diagram \eqref{eq: diagram quotient by D8}, it is clear that $W^{(2)}$ is the quotient of $S\times S$ by the group $\langle \iota\times \id, \sigma\rangle\simeq \mathcal{D}_8$.

Here we want to describe the 4-folds involved in our construction as covers, starting by the 4-fold $W^{(2)}$.

We will consider the following quotient maps:
\begin{align}\label{eq: notation}S\times S\stackrel{\alpha_1}{\ra} (S\times S)/(\iota\times \iota)\stackrel{\alpha_2}{\ra} W\times W\stackrel{\alpha_3}{\ra} W^{(2)};\end{align}
$$S\times S\stackrel{\alpha_1}{\ra} (S\times S)/(\iota\times \iota)\stackrel{\beta_2}{\ra} S^{(2)}/\iota^{(2)}\stackrel{\beta_3}{\ra} W^{(2)};$$
$$S\times S\stackrel{\gamma_1}{\ra} S^{(2)}\stackrel{\gamma_2}{\ra} S^{(2)}/\iota^{(2)}\stackrel{\beta_3}{\ra} W^{(2)};$$

and the following subspaces in $S\times S$:
$$\Delta_S:=\{(p,p)|p\in S\}\simeq S;$$ 
$$\Gamma_S:=\{(p,\iota_S(p))|p\in S\}\simeq S;$$ 
$$\Delta_C:=\{(p,p)|p\in \Fix_{\iota}\}\simeq \Fix_{\iota};$$ 
(the notation comes from the special case where $\Fix_{\iota}$ consists of a curve $C$)
$$B_C:=\{(p,q)|p\in \Fix_{\iota}(S),\  q\in \Fix_{\iota}(S)\}\simeq \Fix_{\iota}(S)\times \Fix_{\iota}(S);$$
$$T_1:=\{(p,q)|p\in \Fix_{\iota}(S),\ q\in S\}\simeq \Fix_{\iota}(S)\times S;$$ 
$$T_2:=\{(p,q)|p\in S,\ q\in \Fix_{\iota}(S)\}\simeq S\times \Fix_{\iota}(S).$$ 

The map $\pi_{tot}:S\times S\ra W^{(2)}$ coincides with each of the compositions: $\alpha_3\circ\alpha_2\circ\alpha_1$, $\beta_3\circ\beta_2\circ\alpha_1$ and
$\beta_3\circ\gamma_2\circ\gamma_1$.

We also remark that $\Sing(W^{(2)})=\pi_{tot}(\Delta_S)=\pi_{tot}(\Gamma_S)$.
In the following we will assume that $\Fix_{\iota}(S)$ consists of one smooth irreducible curve and we will prove the following proposition.

\begin{prop}\label{prop: description of 4-folds as double covers} Let $(W,C)$ be a pair such that: $W$ is a smooth surface; $C$ is a smooth curve in $W$; there exists the double cover of $W$ branched on $C$ and it is a K3 surface $S$. Let $\iota_S$ be the cover involution of $S\ra W$.

Let us denote by $T_{tot}$ the image of $(W\times C)\cup (C\times W)\subset W\times W$ under the quotient map $W\times W\ra W^{(2)}$.

The double cover of $W^{(2)}$ branched along $T_{tot}$ is a singular 4-fold $V_1$ which is singular along three surfaces $A_1$, $A_2$, $A_3$, all meeting transversally in the same curve, isomorphic to $C$. Two of the singular surfaces, say $A_1$ and $A_2$, are isomorphic to $W$, the third to $C^{(2)}$. There exists a crepant resolution of $V_1$ which is a Calabi--Yau 4-fold. The 4-fold $V_1$ is isomorphic to $S^{(2)}/\iota^{(2)}$ and is birational to $Y_S$.  

The double cover of $V_1$ branched in the union of $A_3\simeq C^{(2)}$ and one other singular surface ($A_i$, with either  $i=1$ or $i=2$) is a singular 4-fold $V_2$. The 4-fold $V_2$ is singular along a surface and the blow up of $V_2$ in its singular locus is a hyperk\"ahler 4-fold, isomorphic to $S^{[2]}$. In particular, $V_2\simeq S^{(2)}$.

The double cover of $V_1$ branched along the union of $A_1\simeq W$ and $A_2\simeq W$ is a singular 4-fold $V_3$. The 4-fold $V_3$ is singular along a surface and the blow up of $V_3$ in its singular locus is a Calabi--Yau 4-fold, isomorphic to $Z_S$. In particular $V_3\simeq (S\times S)/(\iota_S\times \iota_S)$.

The double cover of $V_2$ branched along its singular locus, coincides with the double cover of $V_3$ branched along its singular locus and it is $S\times S$.
\end{prop}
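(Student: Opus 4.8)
The plan is to identify $V_1$, $V_2$, $V_3$ with quotients of $S\times S$ by subgroups of $\mathcal{D}_8=\langle\iota_S\times\id,\sigma\rangle$ and then to read off the two double covers in question as the corresponding quotient maps. By the isomorphisms already established in the statement, $V_1\simeq S^{(2)}/\iota_S^{(2)}$, $V_2\simeq S^{(2)}$ and $V_3\simeq(S\times S)/(\iota_S\times\iota_S)$. Put $K:=\langle\sigma,\iota_S\times\iota_S\rangle=\langle g_2,g_7\rangle$, the normal subgroup $\simeq(\Z/2\Z)^2$ of $\mathcal{D}_8$ appearing in diagram \eqref{eq: diagram quotient by D8}, whose third non-trivial element is $g_8=\sigma\circ(\iota_S\times\iota_S)$. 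Since $\sigma$ and $\iota_S\times\iota_S$ commute and $\iota_S^{(2)}$ is by definition the involution that $\iota_S\times\iota_S$ induces on $S^{(2)}=(S\times S)/\langle\sigma\rangle$, we get $V_1=(S\times S)/K$, $V_2=(S\times S)/\langle\sigma\rangle$ and $V_3=(S\times S)/\langle\iota_S\times\iota_S\rangle$; thus $S\times S\to V_1$ is the Galois $(\Z/2\Z)^2$-cover with group $K$, and $V_2$, $V_3$ are two of its three intermediate double covers (the third being $(S\times S)/\langle g_8\rangle$).

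Next I would pin down the branch loci. The map $\gamma_1\colon S\times S\to S^{(2)}$ is the quotient by $\sigma$: it is \'etale outside $\Delta_S=\Fix(\sigma)$ and $2:1$ ramified along $\Delta_S$. Choosing local coordinates $(z_1,w_1,z_2,w_2)$ near a point of $\Delta_S$ in which $\sigma$ interchanges $(z_1,w_1)$ and $(z_2,w_2)$, one checks directly that $S^{(2)}$ is singular exactly along $\gamma_1(\Delta_S)$, and that $\gamma_1(\Delta_S)\cong\Delta_S\cong S$ since $\sigma$ fixes $\Delta_S$ pointwise; hence $\gamma_1$ exhibits $S\times S$ as the double cover of $V_2$ branched along $\Sing(V_2)$. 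Symmetrically, $\alpha_1\colon S\times S\to(S\times S)/(\iota_S\times\iota_S)$ is the quotient by $\iota_S\times\iota_S$, \'etale outside $B_C=\Fix_{\iota_S}(S)\times\Fix_{\iota_S}(S)=\Fix(\iota_S\times\iota_S)$; in local coordinates where $\iota_S\times\iota_S$ acts by $(z_1,w_1,z_2,w_2)\mapsto(z_1,-w_1,z_2,-w_2)$ — the fixed curve of $\iota_S$ being a divisor of $S$ — the quotient is singular precisely along $\alpha_1(B_C)\cong B_C$, so $\alpha_1$ exhibits $S\times S$ as the double cover of $V_3$ branched along $\Sing(V_3)$.

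To conclude, both of these double covers are the single variety $S\times S$, and the towers $S\times S\xrightarrow{\gamma_1}V_2\to V_1$ and $S\times S\xrightarrow{\alpha_1}V_3\to V_1$ are precisely the two factorizations of the $K$-cover $S\times S\to V_1$ through its intermediate double covers, with covering involutions $\sigma$ and $\iota_S\times\iota_S$ respectively; hence the two double covers coincide as varieties over $V_1$ and both equal $S\times S$, which is the asserted statement. The only genuinely non-formal step is the local model computation identifying, in each of the two cases, the singular locus of the quotient with the image of the ramification locus (and with nothing more); this is where I would expect to have to be careful, although it is routine since the group action is, in suitable coordinates, linear of the simplest type in each case. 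If one wants ``the'' double cover branched along the singular locus to be well defined, it suffices to add that these singularities are locally of the form $\C^2\times(\C^2/\{\pm1\})$, for which such a cover is locally unique, namely $\C^4$ with its quotient map; but for the statement as phrased, exhibiting $S\times S$ as one such cover is enough.
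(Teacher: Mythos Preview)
Your argument correctly handles the \emph{final} assertion of the proposition --- that the double covers of $V_2$ and $V_3$ along their respective singular loci both yield $S\times S$ --- and the mechanism you use, the Klein four-group $K=\langle\sigma,\iota_S\times\iota_S\rangle$ acting on $S\times S$ with $V_2$ and $V_3$ as two of its intermediate quotients, is exactly the structure encoded in the paper's diagram~\eqref{eq: diagram quotient by D8}. For that last sentence your proof is fine and essentially matches the paper's.

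The gap is that you treat the identifications $V_1\simeq S^{(2)}/\iota_S^{(2)}$, $V_2\simeq S^{(2)}$, $V_3\simeq(S\times S)/(\iota_S\times\iota_S)$ as ``already established in the statement'' and then build on them. But these are not hypotheses: they are the main claims of the proposition. The $V_i$ are \emph{defined} abstractly as double covers with prescribed branch data over $W^{(2)}$ and over $V_1$, and the substance of the proposition is that these abstractly defined covers agree with the indicated quotients of $S\times S$. You never verify, for instance, that the branch locus of $\beta_3\colon S^{(2)}/\iota_S^{(2)}\to W^{(2)}$ is $T_{tot}$, nor do you identify the three singular surfaces $A_1,A_2,A_3$ of $V_1$ (the paper finds them as $\beta_2(\alpha_1(\Delta_S))\simeq W$, $\beta_2(\alpha_1(\Gamma_S))\simeq W$, and $\beta_2(\alpha_1(B_C))\simeq C^{(2)}$, all meeting in the curve $\beta_2(\alpha_1(\Delta_C))$), nor that the maps $\gamma_2$ and $\beta_2$ are branched exactly along the correct pairs among the $A_i$. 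The paper devotes Sections~\ref{subsubsec: S2/i2}--\ref{subsubsec: SxS/ixi, I} to precisely this bookkeeping. The local computations you do carry out for $\Sing(V_2)$ and $\Sing(V_3)$ are correct and are part of the argument, but they come at the end of a chain whose earlier links you have assumed rather than proved.
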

\proof The proof is based on the diagram \eqref{eq: diagram quotient by D8} and the details on the branch of the $2:1$ covers are given in Sections \ref{subsubsec: S2/i2}, \ref{subsubsec: S2} and \ref{subsubsec: SxS/ixi, I}, where we use the notation introduced in \eqref{eq: notation}.\endproof

\subsubsection{$V_1\simeq S^{(2)}/\iota_S^{(2)}$ as double cover of $W^{(2)}$}\label{subsubsec: S2/i2}

The threefold $T_{tot}:=\pi_{tot}(T_1)=\pi_{tot}(T_2)\subset W^{(2)}$ meets the singular locus of $W^{(2)}$ in the curve $\pi_{tot}(\Delta_C)$. Moreover we observe that it is a singular threefold and its singular locus is $\pi_{tot}(B_C)$. 

By diagram \eqref{eq: diagram quotient by D8}, the double cover of $W^{(2)}$ branched along $T_{tot}$ is the 4-fold $S^{(2)}/\iota_S^{(2)}$. The inverse image of $\Sing(W^{(2)})$ consists of two surfaces (which are $\beta_2(\alpha_1(\Delta_S))$ and $\beta_2(\alpha_1(\Gamma_S))$) meeting along the inverse image of $T_{tot}\cap \Sing(W^{(2)})$, which is the curve $\beta_3(\beta_2(\alpha_1(\Delta_C)))$). These two surfaces are singular for $S^{(2)}/\iota_S^{(2)}$ and intersects along a curve which is $\beta_2(\alpha_1(\Delta_C))$. They are $\beta_2(\alpha_1(\Delta_S))\simeq W$ and  $\beta_2(\alpha_1(\Gamma_S))\simeq W$. 

But the singularities of $S^{(2)}/\iota_S^{(2)}$ do not consist only of the surfaces $\beta_2(\alpha_1(\Delta_S))$ and $\beta_2(\alpha_1(\Gamma_S))$. Indeed, we already remarked that the branch threefold $T_{tot}$ is singular along a surface, which is $\pi_{tot}(B_C)\not\subset \Sing(W^{(2)})$. Hence, the $2:1$ cover $S^{(2)}/\iota_S^{(2)}\ra W^{(2)}$ is singular along the inverse image of this surface. Thus, we have a third singular surface in $S^{(2)}/\iota_S^{(2)}$, which is $\beta_2(\alpha_1(B_C))$ and it is isomorphic to $C^{(2)}$. This third singular surface intersects the other two singular surfaces in their common intersection, i.e. in the curve $\beta_2(\alpha_1(\Delta_C))$. The intersection among these three surfaces is transversal, since they are images of surfaces on $S\times S$ which generically have no common tangent directions.

\subsubsection{$V_2\simeq S^{(2)}$ as double cover of $V_1\simeq S^{(2)}/\iota_S^{(2)}$ (and thus as $4:1$ cover of $W^{(2)}$)}\label{subsubsec: S2}
The 4-fold $S^{(2)}/\iota_S^{(2)}$ is singular in three surfaces, which intersect in $\beta_2(\alpha_1(\Delta_C))$ and which are $\beta_2(\alpha_1(\Delta_S))$, $\beta_2(\alpha_1(\Gamma_S))$ and $\beta_2(\alpha_1(B_C))$. The same surfaces can be described also as $\gamma_2(\gamma_1(\Delta_S))$, $\gamma_2(\gamma_1(\Gamma_S))$ and $\gamma_2(\gamma_1(B_C))$.

Let us now consider the double cover of $S^{(2)}/\iota_S^{(2)}$ branched along $\gamma_2(\gamma_1(\Gamma_S))\cup\gamma_2(\gamma_1(B_C))$. By diagram \eqref{eq: diagram quotient by D8}, we obtain a 4-fold which is $S^{(2)}$. It is singular in $\gamma_1(\Delta_S)\simeq S$, indeed the quotient map $\gamma_2:S^{(2)}\ra S^{(2)}/\iota_S^{(2)}$ restricts to a $2:1$ map between $\gamma_1(\Delta_S)\simeq S$ and $\gamma_2(\gamma_1(\Delta_S))\simeq W$, branched along $\gamma_2(\gamma_1(\Delta_C))$.

\subsubsection{$V_3\simeq (S\times S)/(\iota_S\times \iota_S)$ as double cover of $V_1\simeq S^{(2)}/\iota_S^{(2)}$ (and thus as $4:1$ cover of $W^{(2)}$, via $S^{(2)}/\iota_S^{(2)}$)}\label{subsubsec: SxS/ixi, I}
Similarly to what we did in the previous paragraph, one can consider two surfaces among the three singular surfaces in $S^{(2)}/\iota_S^{(2)}$ and consider the double cover branched along these two surfaces. In particular, let us consider the double cover of $S^{(2)}/\iota_S^{(2)}$ branched along $\beta_2(\alpha_1(\Delta_S))\cup\beta_2(\alpha_1(\Gamma_S))$. We obtain a 4-fold which is $(S\times S)/(\iota_S\times\iota_S)$. It is singular in $\alpha_1(B_C)\simeq C\times C$, indeed the quotient map $\beta_2:(S\times S)/(\iota_S\times \iota_S)\ra S^{(2)}/\iota_S^{(2)}$ restricts to a $2:1$ map between $\alpha_1(B_C)\simeq S$ and $\beta_2(\alpha_1(B_C))\simeq C^{(2)}$, branched along $\beta_2(\alpha_1(\Delta_C))$.

\subsubsection{$V_2\simeq (S\times S)/\langle\iota_S\times \iota_S\circ \sigma\rangle)$ as double cover of $V_1\simeq S^{(2)}/\iota_S^{(2)}$ (and thus as $2^2:1$ cover of $W^{(2)}$)}
The third (and last) possible choice is to consider the double cover of $S^{(2)}/\iota_S^{(2)}$ branched along $\beta_2(\alpha_1(B_C))\cup\beta_2(\alpha_1(\Delta_S))$. We obtain a 4-fold which is $(S\times S)/\langle\iota_S\times\iota_S\circ \sigma\rangle$ and is isomorphic to $S^{(2)}\simeq V_2$ by Proposition \ref{prop: other hk quotient of SxS}.

\subsubsection{$(S\times S)/(\iota_S\times\iota_S)$ as $4:1$ cover of $W^{(2)}$, via $W\times W$}\label{subsubsec: SxS/ixi, II}

Let us consider the double cover of $W^{(2)}$ branched along $\Sing(W^{(2)})$. This is the smooth fourfold $W\times W$. Let us consider the (singular) threefold $T_{tot}:=\alpha_3(\alpha_2(\alpha_1(T_1)))=\alpha_3(\alpha_2(\alpha_1(T_2)))$, which intersects the singular locus of $W^{(2)}$ in $\alpha_3(\alpha_2(\alpha_1(\Delta_C)))\simeq C$. In the double cover $W\times W$, $T_{tot}$ splits in the two threefolds  $\alpha_2(\alpha_1(T_1))$ and $\alpha_2(\alpha_1(T_2))$, meeting along the surface $\alpha_2(\alpha_1(B_C))$ (which is in fact the surface mapped on the singularity of $T_{tot}$). The fourfold $(S\times S)/(\iota_S\times\iota_S)$ is the double cover of $W\times W$ branched along the union of the two threefolds over $\alpha_2(\alpha_1(T_1))$ and $\alpha_2(\alpha_1(T_2))$ (i.e. along the two inverse images of $T_{tot}$ in the double cover $W\times W\ra W^{(2)}$). These two threefolds meet along the surface ($\alpha_2(\alpha_1(B_C))\simeq C\times C$ , thus the double cover $(S\times S)/(\iota_S\times \
iota_S)$ is 
singular along the surface inverse image of $\alpha_2(\alpha_1(B_C))\simeq C\times C$.

\subsection{A very special case: $\iota_S$ is an Enriques involution}\label{subsec: iotaS Enriques}

We now focus on the case where $\iota_S$ is  an Enriques involution of $S$ (which is by definition a non symplectic involution such that the fixed locus of $\iota_S$ is empty). The quotient surface $W\simeq S/\iota_S$ is an Enriques surface (a regular surface with the canonical bundle which is a 2-torsion bundle).

\begin{proposition}\label{prop:quoz_enriq}
If $\iota_S$ is an Enriques involution on $S$, then $Y_S$ is isomorphic to $\widetilde{Z_S/\sigma_Z}$ and they are the blow up of the non ramified double cover of $W^{(2)}$ in its singular locus.
\end{proposition}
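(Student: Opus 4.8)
The plan is to realise all three $4$-folds in the statement as one and the same (crepant) resolution of a single singular quotient of $S\times S$, namely of
\[
\mathcal{V}:=(S\times S)/\langle\sigma,\iota_S\times\iota_S\rangle=S^{(2)}/\iota_S^{(2)}.
\]
Because $\iota_S$ is fixed point free, the involution $\iota_S\times\iota_S$ acts freely on $S\times S$, so $(S\times S)/(\iota_S\times\iota_S)$ is already smooth and coincides with $Z_S$; by diagram \eqref{eq: diagram quotient by D8} the involution it carries is $\sigma_Z$ and $Z_S/\sigma_Z=\mathcal{V}$. Moreover $\langle\sigma,\iota_S\times\iota_S\rangle=\langle g_2,g_7\rangle$ has index $2$ in $\mathcal{D}_8$, so $\mathcal{V}\to(S\times S)/\mathcal{D}_8=W^{(2)}$ is a double cover; the first thing I would check is that its deck transformation, induced by $g_3=\iota_S\times\id$, acts without fixed points — a short orbit computation in which the hypothesis $\Fix_{\iota_S}(S)=\emptyset$ is used in an essential way — so that $\mathcal{V}$ is exactly the non ramified double cover of $W^{(2)}$.

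The second step is to describe $\Sing(\mathcal{V})$. Writing $G:=\langle\sigma,\iota_S\times\iota_S\rangle$, a point of $S\times S$ has nontrivial $G$-stabiliser precisely when it lies on $\Delta_S=\{(p,p)\}$ (stabiliser $\langle\sigma\rangle$) or on $\Gamma_{\iota_S}=\{(p,\iota_S(p))\}$ (stabiliser $\langle\sigma\circ(\iota_S\times\iota_S)\rangle$); since $\iota_S$ has no fixed point these two surfaces are disjoint, their images in $\mathcal{V}$ are disjoint and each isomorphic to $W\cong S/\iota_S$, and a local computation shows that in both cases the generator of the stabiliser acts on the tangent space of $S\times S$ with eigenvalues $(1,1,-1,-1)$. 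Hence $\Sing(\mathcal{V})=W_1\sqcup W_2$ with $W_i\cong W$, and along each $W_i$ the $4$-fold $\mathcal{V}$ has a transverse $A_1$ singularity; consequently $\mathcal{V}$ admits a \emph{unique} crepant resolution $\widetilde{\mathcal{V}}$, obtained by blowing up $\Sing(\mathcal{V})$. By definition $\widetilde{Z_S/\sigma_Z}=\widetilde{\mathcal{V}}$, and this is the blow up of the non ramified double cover of $W^{(2)}$ in its singular locus.

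It then remains to identify $Y_S$ with $\widetilde{\mathcal{V}}$. The centre $\Delta_S$ of the blow up $S^{[2]}=\mathrm{Bl}_{\Delta_S}S^{(2)}$ is preserved by $\iota_S^{(2)}$, which acts \emph{freely} on it (again by freeness of $\iota_S$) and whose fixed locus on $S^{(2)}$ is disjoint from $\Delta_S$; therefore, over an invariant neighbourhood of $\Delta_S$, blowing up commutes with passing to the quotient, and
\[
S^{[2]}/\iota_S^{[2]}=\bigl(\mathrm{Bl}_{\Delta_S}S^{(2)}\bigr)/\iota_S^{[2]}=\mathrm{Bl}_{W_1}\mathcal{V},
\]
where $W_1\subset\Sing(\mathcal{V})$ is the image of $\Delta_S$. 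Blowing up $W_1$ resolves that transverse $A_1$ locus and, as $W_2$ is disjoint from $W_1$, leaves $W_2$ untouched; and $W_2$ is exactly the image of $\Fix(\iota_S^{[2]})$, which for a free $\iota_S$ consists — in the notation of \S\ref{subsec: Y_S} with $C=\emptyset$ and $k=0$ — of the single surface isomorphic to $S/\iota_S=W$. Thus the singular locus of $S^{[2]}/\iota_S^{[2]}$ is $W_2$, still a transverse $A_1$ locus, and $Y_S$ is obtained from it by one blow up: $Y_S=\mathrm{Bl}_{W_2}\mathrm{Bl}_{W_1}\mathcal{V}=\mathrm{Bl}_{\Sing(\mathcal{V})}\mathcal{V}=\widetilde{\mathcal{V}}$. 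Combining with the previous step, $Y_S\cong\widetilde{Z_S/\sigma_Z}\cong\widetilde{\mathcal{V}}$.

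The main obstacle is the identity $S^{[2]}/\iota_S^{[2]}=\mathrm{Bl}_{W_1}\mathcal{V}$: one must verify carefully that, since $\iota_S^{(2)}$ acts freely on the blow up centre and $\Fix(\iota_S^{(2)})$ avoids it, the quotient of the blow up really is the blow up of the quotient, and that $W_1$ and $W_2$ are genuinely disjoint so that the two blow ups may be performed independently. Once that is established, uniqueness of the crepant resolution of a transverse $A_1$ singularity is precisely what promotes the birational equivalence $Z_S/\sigma_Z\dashrightarrow Y_S$ of Proposition \ref{prop: ZS/sigma birat to YS} to the isomorphism asserted here.
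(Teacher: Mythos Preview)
Your argument is correct and follows essentially the same route as the paper's own proof: both identify $Y_S$ and $\widetilde{Z_S/\sigma_Z}$ with the blow up of $\mathcal{V}=S^{(2)}/\iota_S^{(2)}$ along its two disjoint singular surfaces (the images of $\Delta_S$ and $\Gamma_{\iota_S}$), using that these are disjoint precisely because $\Fix_{\iota_S}(S)=\emptyset$. Your write-up is in fact more explicit than the paper's on two points---you check that $\mathcal{V}\to W^{(2)}$ is unramified by an orbit computation, and you isolate the commutation $S^{[2]}/\iota_S^{[2]}\cong\mathrm{Bl}_{W_1}\mathcal{V}$ as the technical crux---whereas the paper simply asserts these steps; but the underlying strategy is the same.
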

\proof Both $Y_S$ and $\widetilde{Z_S/\sigma_Z}$ are desingularizations of $S^{(2)}/\iota_S^{(2)}$. The canonical bundle of $W^{(2)}$ is a 2-torsion bundle which induces the $2:1$ cover $S^{(2)}/ \iota^{(2)}\ra W^{(2)}$, thus $S^{(2)}/\iota^{(2)}$ is singular along two surfaces (mapped to $\Sing(W^{(2)})$). These surfaces are $\beta_2(\alpha_1(\Delta_S))$ and $\beta_2(\alpha_1(\Gamma_S)$. Since $S^{(2)}/ \iota^{(2)}\ra W^{(2)}$ is an unramified cover (or since $\Fix_{\iota_S}(S)$ is empty), the surfaces  $\beta_2(\alpha_1(\Delta_S))$ and $\beta_2(\alpha_1(\Gamma_S)$ are disjoint.

In order to construct $Y_S$, one first constructs $S^{[2]}$ as blow up of $S^{(2)}$ in its singular locus, which is $\gamma_1(\Delta_S)$. Then one constructs the quotient $S^{[2]}/\iota_S^{[2]}$ and blow up its singular locus, which is mapped on $S^{(2)}/\iota_S^{(2)}$ to  $\gamma_1(\gamma_2(\Gamma_S))$. So $Y_S$ is isomorphic to $S^{(2)}/\iota_S^{(2)}$ blown up in the two disjoint surfaces $\gamma_1(\gamma_2(\Delta_S))\simeq \alpha_1(\beta_2(\Delta_S))$ and $\gamma_1(\gamma_2(\Gamma_S))\simeq \alpha_1(\beta_2(\Gamma_S))$.

In order to construct $\widetilde{Z_S/\sigma_Z}$, one first constructs the smooth quotient $(S\times S)/(\iota_S\times \iota_S)$, then one considers the quotient $\beta_2:(S\times S)/(\iota_S\times \iota_S)\ra S^{(2)}/\iota_2^{(2)}$ and finally one blows up the two singular surfaces $\alpha_1(\beta(\Delta_S))$ and $\alpha_1(\beta(\Gamma_S))$ of $S^{(2)}/\iota_2^{(2)}$. 

So both $Y_S$ and $\widetilde{Z_S/\sigma_Z}$ are the blows up of $S^{(2)}/\iota_S^{(2)}$ in its two disjoint singular surfaces.\endproof

We remark that $Y_S\simeq\widetilde{Z_S/\sigma_S}$ (blow up of $S^{(2)}/\iota_S^{(2)}$ in its singular locus) is exactly the universal cover of $W^{[2]}$ (blow up of $W^{(2)}$ in its singular locus), whose existence is proven in \cite[Theorem 3.1]{Oguiso-Schroer}.

\section{Projective models}\label{sec: projective models}
The aim of this section is to describe some explicit models of the 4-folds constructed relating the geometric description given by the diagram \eqref{eq: diagram quotient by D8} and by Proposition \ref{prop: description of 4-folds as double covers} with the description of the cohomology on $S^{[2]}$, $Z_S$ and $Y_S$, given Sections \ref{subsec: the picard  group of YS}, \ref{subsec: Y_S} and \ref{subsec: ZS}.

The main results of this Section are of two different types: first we consider divisors induced on the 4-folds $S^{[2]}$, $Z_S$ and $Y_S$ by nef and big divisors on $S$ and we compute their characteristic (Proposition \ref{chi-formula}) and in some cases the dimension of their linear system (Theorem \ref{prop: h^0 for the divisors H}). Thus we give an explicit formulation of Riemann-Roch theorem in our context. Then we apply these general results to specific examples of divisors and K3 surfaces $S$, in Sections \ref{subsec: S/iota=P2}, \ref{subsec: S/iota=P1P1} and \ref{subsec: S/iota=F4}. In some particular cases we also gives explicit equations for some of the 4-folds constructed, see Section \ref{subsubsec: S/ioa=P2, branch=line+quintic} and proof of Propositions \ref{prop: maps S/iota=P1P1} and \ref{prop: fibrations FX, S/iota=F4}.

If a K3 admits a non symplectic involution $\iota_S$, then $S$ is contained in at least one of these connected components of the moduli spaces of K3 surfaces: $S$ is a $\langle 2\rangle$-polarized K3 surface and in this case $S/\iota_S$ is generically isomorphic to $\mathbb{P}^2$; $S$ is a $U(2)$-polarized K3 surface and in this case $S/\iota_S$ is generically isomorphic to $\mathbb{P}^1\times \mathbb{P}^1$; $S$ is a $U$-polarized K3 surface and in this case $S/\iota_S$ is generically isomorphic to the Hirzebruch surface $\mathbb{F}_4$. We analyze these different situations in Sections \ref{subsec: S/iota=P2}, \ref{subsec: S/iota=P1P1} and \ref{subsec: S/iota=F4} respectively.

\subsection{Results on divisors and linear systems}

Assume now that there is the following commutative diagram:

$$
\xymatrix{ \tilde{E}\ar@{^{(}->}[r]\ar[d]_{\beta_{|\tilde{E}}} &\widetilde{V}\ar[d]_{\beta}\ar[r]_{\pi}&X\ar[d]_{\beta'}\\
\Sigma\ar@{^{(}->}[r]_{j}&V\ar[r]_{\pi'}&V/\alpha},
$$ 
with:
\begin{itemize}
\item $V$ a smooth 4-fold with $c_1(V)=0$ (in the sequel it will be either $V=S\times S,\ Z_S$ or $S^{[2]}$);
\item $\alpha\in\Aut(V)$ a period preserving involution (in the sequel it will be either $\alpha=\sigma,\ \sigma_Z$ or $\iota_S\times \iota_S$);
\item $\Sigma$ the smooth surface fixed by $\alpha$ in $V$, embedded via $j$;
\item $\beta:\widetilde{V}\ra V$  the blow-up of $V$ along $\Sigma$; 
\item $\tilde{E}$ the exceptional divisor over $\Sigma$;
\item $\pi':V\ra V/\alpha$ the quotient map; $V/\alpha$ is singular in $\pi'(\Sigma)$, 
\item $\beta':X\ra V/\alpha$ the blow up of $V/\alpha$ in its singular locus, $\pi'(\Sigma)$;
\item $X=\widetilde{V}/\widetilde{\alpha}$, where $\widetilde{\alpha}$ is the involution induced by $\alpha$ on $\tilde{V}$ and $\pi:\widetilde{V}\ra X$ is the quotient map;
\item  $c_1(X)=0$, as it will be the case in our application.
\end{itemize}
The map $\pi:\widetilde{V}\ra X$ is a double cover ramified along $\tilde{E}$ and branched along the exceptional divisor $E$ of the blow up $\beta':X\ra V/\alpha$. In particular $\pi(\tilde{E})\simeq E$.

Let $D$ be a divisor on $V$ invariant for $\alpha$; we set $\tilde{D}:=\beta^*D$ and denote by $D_{X}$ the divisor on $X$ such that $\pi^*D_{X}=\tilde{D}=\beta^*D$.

\begin{lemma}\label{lemma: divisors induced}
 If $D$ is big and nef on $V$, then $D_{X}$ is big and nef on $X$.
\end{lemma}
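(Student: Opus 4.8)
The plan is to exploit the fact that $\pi\colon\widetilde V\to X$ is a finite surjective morphism (a double cover) together with the relation $\pi^*D_X=\widetilde D=\beta^*D$, and to use that bigness and nefness both behave well under finite pullback and under blow-ups. First I would treat nefness: for any irreducible curve $\Gamma\subset X$, choose an irreducible curve $\widetilde\Gamma\subset\widetilde V$ dominating $\Gamma$ (take a component of $\pi^{-1}(\Gamma)$). Then $\pi_*\widetilde\Gamma=m\,\Gamma$ for some positive integer $m$ (either $1$ or $2$), so by the projection formula $m\,(D_X\cdot\Gamma)=(D_X\cdot\pi_*\widetilde\Gamma)=(\pi^*D_X\cdot\widetilde\Gamma)=(\beta^*D\cdot\widetilde\Gamma)$. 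Since $D$ is nef on $V$ and $\beta$ is a birational morphism, $\beta^*D$ is nef on $\widetilde V$, so the right-hand side is $\geq 0$; hence $D_X\cdot\Gamma\geq 0$ and $D_X$ is nef.

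Next I would handle bigness. The cleanest route is numerical/volume-theoretic: since $\pi$ is a finite morphism of degree $2$ between smooth projective $4$-folds, $(\pi^*D_X)^4=2\,(D_X)^4$, and since $\beta$ is birational, $(\beta^*D)^4=D^4$. As $D$ is big and nef on $V$ we have $D^4>0$ (for a nef divisor, big $\iff$ top self-intersection positive), hence $(D_X)^4=\tfrac12 D^4>0$; combined with nefness just proved, this shows $D_X$ is big and nef on $X$. Alternatively one can argue directly with sections: $\beta^*D$ is big on $\widetilde V$, its $\widetilde\alpha$-invariant sections descend to sections of a multiple of $D_X$ on $X$, and since $D$ is pulled back from a $V$ on which $\alpha$ acts, essentially all of $H^0(\widetilde V,m\beta^*D)$ is invariant, so $h^0(X,mD_X)$ grows like $m^4$; but the intersection-number argument is shorter and I would present that.

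The only genuinely delicate point is making sure the divisor $D_X$ with $\pi^*D_X=\beta^*D$ genuinely exists as a divisor class on $X$ (not merely a $\mathbb{Q}$-divisor), which is where invariance of $D$ under $\alpha$ is used; this is already part of the setup preceding the lemma, so I would simply invoke it. A minor care point is that $\pi$ is ramified along $\widetilde E$, so $\pi^{-1}(\Gamma)$ could be non-reduced when $\Gamma$ lies in the branch locus $E$; but this only affects the multiplicity $m$ by a bounded factor and does not change the sign of the intersection number, so the nefness argument goes through unchanged. I expect no real obstacle here — the statement is a standard descent of positivity along finite covers and blow-ups — so the main ``work'' is just organizing the projection-formula bookkeeping cleanly.
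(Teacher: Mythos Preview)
Your argument is correct. The paper's proof traverses the other side of the commutative square: it first pushes $D$ down along the quotient $\pi'\colon V\to V/\alpha$ (invoking \cite[Prop.~2.61]{kollarbir} for the preservation of bigness under finite quotient maps via the top self-intersection), and then pulls back along the birational resolution $\beta'\colon X\to V/\alpha$, using that nefness is preserved by pullback and bigness is a birational invariant. You instead go $V\leftarrow\widetilde V\to X$: pull $D$ back along $\beta$, then descend along the finite cover $\pi$ using the projection formula for curves and the degree relation on top intersection numbers. Both routes rely on the same underlying facts (nef $\Leftrightarrow$ nonnegative on curves, big nef $\Leftrightarrow$ positive top power), so the difference is organizational rather than substantive; your version has the minor advantage of staying on smooth varieties throughout, while the paper's version makes the role of the singular quotient $V/\alpha$ (and hence the geometric meaning of $D_X$ as the pullback of a divisor living there) more explicit.
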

\proof
The ample (or big and nef) divisors on $V$ which are invariant for an automorphism $\alpha\in \Aut(V)$, descend to ample (or big and nef) divisors on the quotient $V/\alpha$. In particular, bigness of nef divisors is preserved under finite quotient maps by \cite[Prop. 2.61]{kollarbir}, as the sign of the top self intersection does not change. These divisors on the possibly singular quotient $V/\alpha$ induce nef divisors on $X$, and bigness is a birational invariant (see e.g. \cite[Defn. 2.59 and Lemma 2.60]{kollarbir}). 

\endproof

For a divisor $D$ on a fourfold $X$ with $c_1(X)=0$, the Riemann--Roch formula (see for example \cite[Corollary 15.2.1]{Fulton}) is 
\begin{equation}\label{RR4}
\chi(D)=\frac{D^4}{24}+\frac{1}{24}D^2.c_2(X)+\chi(\mathcal{O}_X)
\end{equation}
If $X$ is Calabi--Yau, $\chi(\mathcal{O}_X)=2$, and if $X=S\times S$, $\chi(\mathcal{O}_X)=4$. 

Moreover, for a divisor $D$ on a fourfold of $\hsk$ type,  the Riemann-Roch formula can be written in terms of the Beauville--Bogomolov--Fujiki quadratic form $q$ on $H^2(S^{[2]},\Z)$:
\[
\chi(D)=\frac{1}{8}(q(D)+4)(q(D)+6).
\]

We want now to compute $\chi(D_X)$ in terms of $\chi(D)$, and to do so we need to understand Chern classes of $X$ in terms of Chern classes of $V$.

\begin{proposition}\label{c1-c2(X)}
 Under the above assumptions, we have $$c_2(X)=\frac{1}{2}\pi_*\beta^*c_2(V)+\frac{1}{2}\pi_*\beta^*j_*[\Sigma]-\pi_*([\tilde{E}]^2).$$
\end{proposition}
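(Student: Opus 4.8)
The plan is to compute $c_2(X)$ by factoring the comparison of tangent bundles through the two natural maps out of $\widetilde{V}$, namely the blow-down $\beta\colon\widetilde{V}\to V$ and the quotient $\pi\colon\widetilde{V}\to X$: each of these carries a short exact sequence of sheaves whose Chern-class consequences are standard, and at the very end one pushes forward by $\pi$, using that $\pi$ has degree $2$, so that $\pi_*\pi^*=2$ on cohomology. Throughout I write $k\colon\tilde{E}\hookrightarrow\widetilde{V}$ for the inclusion of the exceptional divisor, which by construction coincides with the ramification divisor of $\pi$.

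First I would analyze the blow-up. Since $\widetilde{V}$ is the blow-up of the smooth fourfold $V$ along the smooth surface $\Sigma$, one has $\tilde{E}=\mathbb{P}(N_{\Sigma/V})$; writing $Q$ for the tautological rank-one quotient bundle on $\tilde{E}$, there is an exact sequence $0\to T_{\widetilde{V}}\to\beta^*T_V\to k_*Q\to 0$, whose cokernel is a sheaf supported on $\tilde{E}$ with reduced structure, restricting there to $Q$ (a one-line local computation). Taking total Chern classes, remembering that $c_1(k_*Q)=[\tilde{E}]$, so that $c_1(\widetilde{V})=\beta^*c_1(V)-[\tilde{E}]$ as it must be, and computing $c(k_*Q)$ from Grothendieck--Riemann--Roch for the divisorial embedding $k$, i.e.\ $\ch(k_*Q)=k_*(\ch(Q)\,\td(N_{\tilde{E}/\widetilde{V}})^{-1})$, the degree-two part gives $c_2(\widetilde{V})=\beta^*c_2(V)-\beta^*c_1(V)\cdot[\tilde{E}]+k_*c_1(Q)$. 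By hypothesis $c_1(V)=0$; moreover the excess intersection formula for the fibre square defining $\beta$, whose excess bundle over $\tilde{E}=\beta^{-1}(\Sigma)$ is exactly $Q$, identifies $k_*c_1(Q)$ with $\beta^*j_*[\Sigma]$. Hence $c_2(\widetilde{V})=\beta^*c_2(V)+\beta^*j_*[\Sigma]$. (Equivalently one may simply quote the blow-up Chern class formula, e.g.\ \cite[Theorem 7.31]{Vo}.)

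Next I would analyze the double cover. As $\pi$ is a double cover with smooth branch divisor $E$ and ramification divisor $\tilde{E}$, differentiating $\pi$ gives an exact sequence $0\to T_{\widetilde{V}}\to\pi^*T_X\to k_*M\to 0$ with $M$ a line bundle on $\tilde{E}$; restricting $\pi^*\mathcal{O}_X(E)=\mathcal{O}_{\widetilde{V}}(2\tilde{E})$ to $\tilde{E}$, or a local computation, shows $M\cong N_{\tilde{E}/\widetilde{V}}^{\otimes 2}$, so that $c_1(M)=2\,k^*[\tilde{E}]$. Grothendieck--Riemann--Roch for $k$ then yields $c_1(k_*M)=[\tilde{E}]$ and $c_2(k_*M)=-[\tilde{E}]^2$; comparing degree-two terms in $c(\pi^*T_X)=c(T_{\widetilde{V}})\,c(k_*M)$, and using $c_1(\widetilde{V})=\pi^*c_1(X)-[\tilde{E}]=-[\tilde{E}]$ (the only place where the hypothesis $c_1(X)=0$ is needed), one obtains the identity $\pi^*c_2(X)=c_2(\widetilde{V})-2[\tilde{E}]^2$ in $H^4(\widetilde{V},\Z)$. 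Applying $\pi_*$ and $\pi_*\pi^*=2$ gives $c_2(X)=\tfrac12\,\pi_*c_2(\widetilde{V})-\pi_*([\tilde{E}]^2)$, and substituting the formula for $c_2(\widetilde{V})$ from the blow-up step yields $c_2(X)=\tfrac12\,\pi_*\beta^*c_2(V)+\tfrac12\,\pi_*\beta^*j_*[\Sigma]-\pi_*([\tilde{E}]^2)$, as claimed.

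The only real difficulty is keeping the bookkeeping straight, and it concentrates in two spots where the naive answer is off by a twist. First, the cokernel $M$ of $d\pi$ along the ramification divisor is $N_{\tilde{E}/\widetilde{V}}^{\otimes 2}$ and not $\mathcal{O}_{\tilde{E}}$; this does not affect $c_1$, but it is exactly what produces the factor $2$ in $\pi^*c_2(X)=c_2(\widetilde{V})-2[\tilde{E}]^2$. Second, the identification $k_*c_1(Q)=\beta^*j_*[\Sigma]$ must be carried out with matching sign conventions for $\zeta=c_1(\mathcal{O}_{\tilde{E}}(1))$ and $k^*[\tilde{E}]=-\zeta$, so that the $\zeta$-contributions coming from $c_2(k_*Q)$ and from $[\tilde{E}]^2=-k_*\zeta$ cancel as they should. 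Once these normalizations are pinned down, each of the two steps reduces to a short Chern character computation.
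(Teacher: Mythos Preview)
Your proof is correct. For the blow-up step you reach the same formula $c_2(\widetilde{V})=\beta^*c_2(V)+\beta^*j_*[\Sigma]$ that the paper simply quotes from Fulton, so there is no substantive difference there. The double-cover step, however, is handled genuinely differently. The paper works \emph{downstairs}: it invokes the Esnault--Viehweg splitting $\pi_*T_{\widetilde{V}}\cong (T_X\otimes L^{-1})\oplus T_X(\log E)$, computes the Chern classes of each summand from the sequences $0\to T_X(\log E)\to T_X\to N_{E/X}\to 0$ and the twist formula, and then applies Grothendieck--Riemann--Roch for the finite morphism $\pi$ to extract $c_2(X)=-\pi_*(c_1(\widetilde{V})^2)+\tfrac12\pi_*c_2(\widetilde{V})$. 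You instead work \emph{upstairs}: from the ramification sequence $0\to T_{\widetilde{V}}\to\pi^*T_X\to k_*M\to 0$ with $M\cong N_{\tilde{E}/\widetilde{V}}^{\otimes 2}$ you obtain $\pi^*c_2(X)=c_2(\widetilde{V})-2[\tilde{E}]^2$ directly on $\widetilde{V}$, and only then push forward using $\pi_*\pi^*=2$. Your route avoids the logarithmic tangent bundle and replaces GRR for $\pi$ by the simpler GRR for the divisorial embedding $k$; the paper's route has the advantage that the decomposition of $\pi_*T_{\widetilde{V}}$ is a reusable structural fact and makes the appearance of $L$ (hence of $E$) transparent without having to pin down the twist on $M$. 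Both collapse to the same intermediate identity once one notes $c_1(\widetilde{V})=-[\tilde{E}]$.
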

\proof
 We apply the theory explained in \cite[\S 3.5]{Esnault-Viehweg} (see also \cite{Cynk-vanStraten}): note that $\pi$ is a double cover branched along the smooth divisor $E$. Then there is $L\in \Pic(X)$ such that $L^{\otimes 2}=\mathcal{O}(E)$ and $\pi_*T_{\widetilde{V}}=T_{X}\otimes L^{-1}\oplus T_{X}(\log E)$,
with
\[
\xymatrix{
0\ar[r]&T_{X}(\log E)\ar[r]&T_{X}\ar[r]&N_{E|X}\ar[r]&0
}\]

Since $c_1(X)=0$ we have $c_1(T_X(\log E))=-c_1(N_{E|X})=-c_1(\of_E(E))$. From the exact sequence
\[
\xymatrix{
0\ar[r]&\of_{X}\ar[r]&\of_X(E)\ar[r]&\of_E(E)\ar[r]&0
}\]
we deduce $c_1(\of_E(E))=[E]$, 
$c_2(\of_E(E))
=0$.

Hence, $c_1(T_X(\log E))=c_1(X)-c_1(N_{E|X})
=-E$
and $$c_2(T_X(\log E))=c_2(X)-c_2(N_{E|X})-c_1(N_{E|X})c_1(T_X(\log E))=c_2(X)+E^2.$$

Next, we use the following formula for the tensor product of a vector bundle and a line bundle: $\ch(V\otimes L)=\ch(V)*\ch(L)$, from which:

\[c_1(T_{X}\otimes L^{-1})=\rk T_X*c_1(L^{-1})+c_1(X)=-4c_1(L)=-2E,\]
\[c_2(T_{X}\otimes L^{-1})=c_2(X)+3c_1(X)c_1(L^{-1})+\binom{4}{2}c_1(L^{-1})^2
=c_2(X)+\frac{3}{2}E^2\]

And we thus obtain:
\[c_1(\pi_*T_{\widetilde{V}})=c_1(T_{X}\otimes L^{-1})+c_1(T_X(\log E))
=-3E,\]
\[c_2(\pi_*T_{\widetilde{V}})=c_2(T_{X}\otimes L^{-1})+c_2(T_X(\log E))+c_1(T_{X}\otimes L^{-1})c_1(T_X(\log E))=\]
\[
=2c_2(X)+\frac{9}{2}E^2.\]

Since $\pi$ is finite, $R^i\pi_*T_{\widetilde{V}}=0$ for $i>0$ and we can apply Grothendieck--Riemann--Roch theorem \cite[Theorem 15.2]{Fulton}: 
$$\ch(\pi_* T_{\widetilde{V}})\,\td(T_{X})=\pi_*(\ch(\widetilde{V})\,\td(T_{\widetilde{V}})),\ \mbox{ i.e.}$$ 
 \[
 [8-3E+\frac{1}{2}(-3E)^2-2c_2(X)-\frac{9}{2}E^2+\dots][1+\frac{1}{12}c_2(X)+\dots]=\]
 \[\pi_*[(4+c_1(\widetilde{V})+\frac{1}{2}c_1(\widetilde{V})^2-c_2(\widetilde{V})+\dots)(1+\frac{1}{2}c_1(\widetilde{V})+\frac{1}{12}c_1(\widetilde{V})^2+\frac{1}{12}c_2(\widetilde{V})+\dots)]
 \]
which yields the following formulas:
 $$\pi_*c_1(\widetilde{V})=-E,\ c_2(X)=-\pi_*(c_1(\widetilde{V})^2)+\frac{1}{2}\pi_*(c_2(\widetilde{V})).$$

Finally, we remark that, by \cite[Example 15.4.3]{Fulton},  
$c_2(\widetilde{V})=\beta^*c_2(V)+\beta^*j_*[\Sigma],$ and this ends the proof.
\endproof

\begin{proposition}\label{chi-formula}
 With the notation above, we have
 \begin{equation*}
  \chi(D_X)=\frac{1}{2}\chi(D)+\frac{1}{16}(D_{|\Sigma})^2-\frac{1}{2}\chi(\of_V)+\chi(\of_X).
 \end{equation*}

\end{proposition}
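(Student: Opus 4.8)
The plan is to compute $\chi(D_X)$ from the Riemann--Roch formula \eqref{RR4} on the fourfold $X$, which has trivial first Chern class, and then to express the two intersection numbers $D_X^4$ and $D_X^2\cdot c_2(X)$ in terms of data on $V$ and on $\Sigma$ by means of the maps $\pi\colon\widetilde V\to X$ and $\beta\colon\widetilde V\to V$, together with the formula for $c_2(X)$ obtained in Proposition \ref{c1-c2(X)}.

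First I would transport everything through $\pi$ and $\beta$. Since $\pi^*D_X=\beta^*D$, $\beta$ is birational and $\pi$ has degree two, the projection formula gives $\int_X D_X^2\cdot\pi_*\xi=\int_{\widetilde V}(\beta^*D)^2\cdot\xi$ for every class $\xi$ on $\widetilde V$; in particular $\int_X D_X^4=\frac12\int_{\widetilde V}(\beta^*D)^4=\frac12\int_V D^4$, so that $D_X^4=\frac12 D^4$ as numbers.

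Next I would treat the term $D_X^2\cdot c_2(X)$ by substituting the expression $c_2(X)=\frac12\pi_*\beta^*c_2(V)+\frac12\pi_*\beta^* j_*[\Sigma]-\pi_*([\tilde E]^2)$ from Proposition \ref{c1-c2(X)} and applying the previous step termwise. The first two summands contribute $\frac12\int_V D^2 c_2(V)$ and $\frac12(D_{|\Sigma})^2$ respectively, the latter because $\int_{\widetilde V}(\beta^*D)^2\cdot\beta^*j_*[\Sigma]=\int_V D^2\cdot j_*[\Sigma]=(D_{|\Sigma})^2$. For the last summand one restricts to $\tilde E$, which is the $\PP^1$-bundle $\PP(N_{\Sigma|V})$ over $\Sigma$ with projection $p$, uses $\beta^*D|_{\tilde E}=p^*(D_{|\Sigma})$ and the standard identity $\of_{\tilde E}(\tilde E)=\of_{\PP(N_{\Sigma|V})}(-1)$, which restricts to $\of_{\PP^1}(-1)$ on the fibres of $p$; this gives $\int_{\widetilde V}(\beta^*D)^2\cdot[\tilde E]^2=-(D_{|\Sigma})^2$, hence a contribution $+(D_{|\Sigma})^2$ to $D_X^2\cdot c_2(X)$. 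Summing up, $D_X^2\cdot c_2(X)=\frac12\int_V D^2 c_2(V)+\frac32(D_{|\Sigma})^2$.

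Finally I would assemble the pieces: plugging $D_X^4=\frac12 D^4$ and the value just found into \eqref{RR4} on $X$ yields $\chi(D_X)=\frac12\bigl(\frac1{24}D^4+\frac1{24}D^2 c_2(V)\bigr)+\frac1{16}(D_{|\Sigma})^2+\chi(\of_X)$, and since $c_1(V)=0$ the Riemann--Roch formula \eqref{RR4} applied on $V$ identifies $\frac1{24}D^4+\frac1{24}D^2 c_2(V)=\chi(D)-\chi(\of_V)$, which is exactly the asserted equality. The main obstacle is the computation of $\int_{\widetilde V}(\beta^*D)^2\cdot[\tilde E]^2$: it requires the precise structure of the exceptional divisor of the blow-up of a smooth surface in a fourfold, the correct sign in $\of_{\tilde E}(\tilde E)=\of_{\PP(N_{\Sigma|V})}(-1)$, and careful bookkeeping of the factors of $2$ introduced by the degree two cover $\pi$ throughout.
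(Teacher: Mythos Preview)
Your proposal is correct and follows essentially the same route as the paper: apply Riemann--Roch on $X$, compute $D_X^4=\tfrac12 D^4$ via the degree-two cover, and evaluate $D_X^2\cdot c_2(X)$ term by term using Proposition~\ref{c1-c2(X)} and the projection formula, then recognise $\tfrac{1}{24}D^4+\tfrac{1}{24}D^2c_2(V)=\chi(D)-\chi(\of_V)$. The only cosmetic difference is in the treatment of the term $\int_{\widetilde V}(\beta^*D)^2\cdot[\tilde E]^2$: you compute it directly via the $\PP^1$-bundle structure of $\tilde E\to\Sigma$ and $\of_{\tilde E}(\tilde E)\cong\of_{\PP(N_{\Sigma|V})}(-1)$, whereas the paper passes through $E^2$ on $X$ and then invokes \cite[Lemma~1.1]{Badescu-Beltrametti}; both yield $-(D_{|\Sigma})^2$ and hence the same $\tfrac{3}{2}(D_{|\Sigma})^2$ contribution to $D_X^2\cdot c_2(X)$.
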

\proof
By Riemann--Roch (\ref{RR4}), $\chi(D_X)=\frac{1}{24}D_X^4+\frac{1}{24}D_X^2.c_2(X)+\chi(\mathcal{O}_X)$. 

By \cite[Proposition 1.10]{Debarre}, $\widetilde{D}^4=(\pi^*D_X)^4=2D_X^4$, and on the other hand $\widetilde{D}^4=D^4$ by \cite[find statement]{Fulton}, hence $D_X^4=\frac{1}{2}D^4$.

We need now to compute $D_X^2.c_2(X)$. We use Proposition \ref{c1-c2(X)}, projection formula \cite[Proposition 8.3(c)]{Fulton} and the fact that $\pi^*E=2\tilde{E}$ and $\pi_*\tilde{E}=E$, getting:
\[
D_X^2.\pi_*\beta^*c_2(V)=\pi_*(\tilde{D}^2.\beta^*c_2(V))= D^2.c_2(V),
\]
\[
D_X^2.\pi_*\beta^*j_*[\Sigma]=\pi_*(\tilde{D}^2.\beta^*j_*[\Sigma])=\pi_*(\beta^*(D^2.j_*[\Sigma]))=\pi_*(\beta^*((D_{|\Sigma})^2)),
\]
\[
D^2_X.\pi_*([\tilde{E}]^2)=D^2_X.\pi_*(\frac{1}{2}\pi^*E.\tilde{E})=\frac{1}{2}D^2_X.E^2=\frac{1}{2}\pi_*(\tilde{D}^2.\tilde{E}^2)=-\frac{1}{2}\pi_*(\beta^*((D_{|\Sigma})^2)).
\]
where last equality follows from \cite[Lemma 1.1]{Badescu-Beltrametti}.
We plug everything into (\ref{RR4}) and obtain:
\[
\chi(D_X)=\frac{1}{48}D^4+\frac{1}{48}D^2.c_2(V)+\frac{1}{16}(D_{|\Sigma})^2+\chi(\of_X)
.
\]
\endproof

Let $H_S$ be a divisor on $S$.  In the sequel we will use the following notations:
\begin{itemize}
 \item $H_{1,S\times S}$ (resp. $H_{2,S\times S}$) is the divisor $H_S\otimes H^{0,0}(S)\in \NS(S\times S)$ (resp. $H^{0,0}(S)\otimes H_S\in \NS(S\times S)$);
 \item $H_{X}$ is the divisor respectively on $X=S^{[2]},\ Z_S$ such that $\pi^*(H_X)=\beta^*(H_{1,S\times S}+H_{2,S\times S})\subset NS(\widetilde{ S\times S})$;
 \item $H_{Y}$ is the divisor on $Y_S$ such that $\pi^*(H_Y)=\beta^*H_{S^{[2]}}\subset NS(\widetilde{S^{[2]}})$;
 \item $H_{1,Z}$, $H_{2,Z}$ are the divisors on $Z_S$ such that $\pi^*(H_{i,Z})=\beta^*(H_i,S\times S)\subset NS(\widetilde{S\times S})$.
 \end{itemize}

A straightforward application of Lemma \ref{lemma: divisors induced} yields the following:
\begin{corollary}\label{cor: divisors induced} Let $H_S$ be an ample (or big and nef) divisor on $S$.  The divisor $H_{1,S\times S}+H_{2,S\times S}$ is ample (or big and nef) on $S\times S$, and $H_{S^{[2]}}$ is big and nef on $S^{[2]}$. 
  
If moreover we assume that $H_S\in \NS(S)$ is invariant for $\iota_S$, then  $H_{Z}$ and $H_Y$ are big and nef divisors.
 \end{corollary}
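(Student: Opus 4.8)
The plan is to derive the Corollary from Lemma~\ref{lemma: divisors induced}. For each of $S^{[2]}$, $Z_S$, $Y_S$ the divisor in question is, by the definitions preceding the statement, the descent $D_X$ of an $\alpha$-invariant divisor $D$ on the fourfold $V$ lying above it in the diagram before Lemma~\ref{lemma: divisors induced}; so, granting that lemma, it suffices to check in each case that $D$ is big and nef on $V$ and that $D$ is invariant under the relevant period preserving involution $\alpha$ (that $\sigma$, $\iota_S\times\iota_S$ and $\iota_S^{[2]}$ are period preserving involutions has already been recorded in the earlier sections). I would first dispose of the product $S\times S$ itself. Write $D:=H_{1,S\times S}+H_{2,S\times S}=p_1^{\ast}H_S+p_2^{\ast}H_S$, where $p_i$ are the two projections. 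If $H_S$ is ample, then $D$ is the external tensor product of an ample line bundle with itself, hence ample on $S\times S$. If $H_S$ is only big and nef, then $D$ is nef, being a sum of pullbacks of nef divisors, and $D^4=\binom{4}{2}(p_1^{\ast}H_S)^2(p_2^{\ast}H_S)^2=6(H_S^2)^2>0$ because a big and nef divisor on the surface $S$ has $H_S^2>0$; since a nef divisor with strictly positive top self-intersection is big, $D$ is big and nef. This settles the first assertion.

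Next I would treat $S^{[2]}$, which is the fourfold $X$ of the diagram with $V=S\times S$ and $\alpha=\sigma$. Since $\sigma$ interchanges $H_{1,S\times S}$ and $H_{2,S\times S}$, the divisor $D$ is $\sigma$-invariant, and it is big and nef by the previous paragraph; Lemma~\ref{lemma: divisors induced} then gives that $H_{S^{[2]}}=D_X$ is big and nef on $S^{[2]}$. (It is not ample: it is pulled back from $S^{(2)}$, hence trivial on the fibres of $\beta':S^{[2]}\to S^{(2)}$.) No assumption on $\iota_S$ is used so far. Now suppose in addition that $\iota_S^{\ast}H_S=H_S$. Then $(\iota_S\times\iota_S)^{\ast}D=p_1^{\ast}\iota_S^{\ast}H_S+p_2^{\ast}\iota_S^{\ast}H_S=D$, so, applying Lemma~\ref{lemma: divisors induced} with $V=S\times S$ and $\alpha=\iota_S\times\iota_S$ (whose resolved quotient is $Z_S$), the divisor $H_Z=D_X$ is big and nef on $Z_S$.

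It remains to handle $Y_S$, for which I would take $V=S^{[2]}$ and $\alpha=\iota_S^{[2]}$, so that $D=H_{S^{[2]}}$; this $D$ is big and nef on $S^{[2]}$ by the step above, and the only thing left to check is that it is $\iota_S^{[2]}$-invariant. Here I would use the decomposition $H^2(S^{[2]},\Z)\cong H^2(S,\Z)\oplus\Z\delta$ together with the fact that the natural involution $\iota_S^{[2]}$ acts by $\iota_S^{\ast}$ on the summand $H^2(S,\Z)$ (by naturality) and by the identity on $\Z\delta$ (cf.\ \cite[Theorem 1]{BS}, already used above to obtain $t_{1,1}=r+1$); since $H_{S^{[2]}}$ lies in the image of $H^2(S,\Z)$ and corresponds there to the $\iota_S$-invariant class $H_S$, it is fixed by $\iota_S^{[2]}$. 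Lemma~\ref{lemma: divisors induced} then yields that $H_Y=D_X$ is big and nef on $Y_S$. The argument is essentially bookkeeping, and the one point that is not completely formal is this last invariance check; this is why I would isolate it and settle it by invoking the known action of a natural involution on the second cohomology of the Hilbert square.
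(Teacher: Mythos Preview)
Your argument is correct and follows exactly the route the paper intends: the paper's proof is the single line ``a straightforward application of Lemma~\ref{lemma: divisors induced}'', and you have simply spelled out the verifications (big-and-nefness of $p_1^\ast H_S+p_2^\ast H_S$ on $S\times S$, and invariance of the relevant divisors under $\sigma$, $\iota_S\times\iota_S$, $\iota_S^{[2]}$) that make that application go through. Your remark that $H_{S^{[2]}}$ is only big and nef, not ample, is also to the point and explains the asymmetry in the statement.
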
  

To avoid confusion, in the following Proposition we will denote the self intersection product between divisors by $(H\cdot H)$ (and not by $H^2$). Moreover we will use the following notation:
$h_{\Sigma_{S\times S}}$ (resp. $h_{\Sigma_{1, S\times S}}$, $h_{\Sigma_Z}$) denote the number of points in the intersection of the surface $\Sigma_{S\times S}\simeq \Fix_{\iota_S}(S)\times \Fix_{\iota_S}(S)\subset S\times S$ (resp. $\Sigma_{S\times S}\simeq \Fix_{\iota_S}(S)\times \Fix_{\iota_S}(S)\subset S\times S$, $\Sigma_{Z}\simeq \Fix_{\sigma_Z}(Z)\subset Z$) and a surface equivalent to  $\left(H_{1,S\times S}+H_{2,S\times S}\right)^2$ (resp. $H_{1,S\times S}^2$, $H_Z^2$).
\begin{theorem}\label{prop: h^0 for the divisors H} Let $H$ be a nef and big $\iota_S$-invariant divisor on $S$. 
Then 
\begin{align*}
\begin{array}{lll}
h^0(H_{S^{[2]}})&=&\frac{1}{8}\left((H\cdot H)+4\right)\left((H\cdot H)+6\right),\\
h^0(H_{Z})&=&\frac{1}{2}\left(h^0(H)\right)^2+\frac{1}{16}h_{\Sigma_{S\times S}},\\
h^0(H_{Y})&=&\frac{1}{2}h^0(H_Z)+\frac{1}{16} h_{\Sigma_{Z}}+1=\frac{1}{4}\left(h^0(H)\right)^2+\frac{1}{32}h_{\Sigma_{S\times S}}+\frac{1}{16}h_{\Sigma_{Z}}+1.
\end{array}
\end{align*}\end{theorem}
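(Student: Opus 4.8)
The plan is to combine three ingredients: the Riemann--Roch computation of $\chi$ for each of the three varieties, the vanishing of higher cohomology for the relevant big and nef divisors, and the transfer formula from Proposition \ref{chi-formula}. For $H_{S^{[2]}}$, by Corollary \ref{cor: divisors induced} the divisor $H_{S^{[2]}}$ is big and nef on the hyperk\"ahler fourfold $S^{[2]}$; since $S^{[2]}$ has trivial canonical bundle, Kawamata--Viehweg vanishing gives $h^i(H_{S^{[2]}})=0$ for $i>0$, so $h^0(H_{S^{[2]}})=\chi(H_{S^{[2]}})$, and the Beauville--Bogomolov--Fujiki form version of Riemann--Roch recalled just before the statement yields $\frac18(q(H_{S^{[2]}})+4)(q(H_{S^{[2]}})+6)$. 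One then identifies $q(H_{S^{[2]}})$ with $(H\cdot H)$: this follows because $H_{S^{[2]}}$ is the natural divisor induced by $H$, and the embedding $H^2(S,\Z)\hookrightarrow H^2(S^{[2]},\Z)$ is an isometry for the BBF form, so $q(H_{S^{[2]}})=(H\cdot H)_S$.

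Next, for $H_Z$ on $Z_S$: again $H_Z$ is big and nef on the Calabi--Yau fourfold $Z_S$ by Corollary \ref{cor: divisors induced}, so $h^0(H_Z)=\chi(H_Z)$ by Kawamata--Viehweg vanishing. Now I apply Proposition \ref{chi-formula} with $V=S\times S$, $\alpha=\iota_S\times\iota_S$, $X=Z_S$, $\Sigma=\Sigma_{S\times S}$ and $D=H_{1,S\times S}+H_{2,S\times S}$: this gives $\chi(H_Z)=\tfrac12\chi(D)-\tfrac12\chi(\of_{S\times S})+\chi(\of_{Z_S})+\tfrac1{16}(D_{|\Sigma})^2$. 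By K\"unneth, $\chi(D)=\chi_{S\times S}(H_{1,S\times S}+H_{2,S\times S})=\chi_S(H)^2=h^0(H)^2$ (the last equality since $H$ is big and nef on the K3 surface $S$, so $h^1(H)=h^2(H)=0$); also $\chi(\of_{S\times S})=4$ and $\chi(\of_{Z_S})=2$, so $-\tfrac12\cdot4+2=0$. Finally $(D_{|\Sigma})^2$ is, by definition of $h_{\Sigma_{S\times S}}$, the number of intersection points $h_{\Sigma_{S\times S}}$. This produces $h^0(H_Z)=\tfrac12 h^0(H)^2+\tfrac1{16}h_{\Sigma_{S\times S}}$.

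For $H_Y$ on $Y_S$ the argument is structurally identical but uses the tower one level up. By Corollary \ref{cor: divisors induced}, $H_Y$ is big and nef on the Calabi--Yau $Y_S$, hence $h^0(H_Y)=\chi(H_Y)$. Apply Proposition \ref{chi-formula} with $V=Z_S$, $\alpha=\sigma_Z$, $X=Y_S$ (using that $Y_S$ is a crepant resolution of $Z_S/\sigma_Z$, Proposition \ref{prop: ZS/sigma birat to YS}, and that Hodge and Euler-characteristic data are birational invariants for Calabi--Yau fourfolds so we may compute on this model), $\Sigma=\Sigma_Z$ and $D=H_Z$, noting $\pi^*H_Y=\beta^*H_Z$ by the definition of $H_Y$ traced through the diagram. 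This gives $\chi(H_Y)=\tfrac12\chi(H_Z)+\tfrac1{16}(H_Z{}_{|\Sigma_Z})^2-\tfrac12\chi(\of_{Z_S})+\chi(\of_{Y_S})$. Here $\chi(H_Z)=h^0(H_Z)$ from the previous step, $(H_Z{}_{|\Sigma_Z})^2=h_{\Sigma_Z}$ by definition, and $-\tfrac12\chi(\of_{Z_S})+\chi(\of_{Y_S})=-\tfrac12\cdot2+2=1$, which accounts for the additive $+1$. Substituting the expression for $h^0(H_Z)$ then gives the stated closed form $\tfrac14 h^0(H)^2+\tfrac1{32}h_{\Sigma_{S\times S}}+\tfrac1{16}h_{\Sigma_Z}+1$.

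The main obstacle is bookkeeping rather than conceptual depth: one must verify carefully that the hypotheses of Proposition \ref{chi-formula} genuinely apply in each case, in particular that $Y_S$ really occurs as $\widetilde{V}/\widetilde\alpha$ for $(V,\alpha)=(Z_S,\sigma_Z)$ in the precise sense required by the diagram preceding that proposition (this relies on the analysis in Section \ref{subsec: quotient of ZS} that $\sigma_Z$ fixes exactly the relevant disjoint smooth surfaces), and that the pullback relations $\pi^*H_Y=\beta^*H_Z$ and $\pi^*H_Z=\beta^*(H_{1,S\times S}+H_{2,S\times S})$ hold as stated in the notation list. One must also confirm that the geometric intersection numbers $h_{\Sigma_{S\times S}}$ and $h_{\Sigma_Z}$ are exactly what the self-intersections $(D_{|\Sigma})^2$ compute — i.e.\ that the divisor representatives can be chosen to meet $\Sigma$ transversally in the prescribed number of reduced points, which is where bigness and nefness together with a Bertini-type argument are used. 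Everything else is the K\"unneth formula, Kawamata--Viehweg vanishing, and elementary arithmetic with $\chi(\of)$.
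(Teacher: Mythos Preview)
Your proof is correct and follows exactly the approach of the paper: Kawamata--Viehweg vanishing on each big and nef divisor reduces $h^0$ to $\chi$, and then Proposition~\ref{chi-formula} is applied (trivially for $H_{S^{[2]}}$ via the BBF Riemann--Roch formula, with $(V,\alpha)=(S\times S,\iota_S\times\iota_S)$ for $H_Z$, and with $(V,\alpha)=(Z_S,\sigma_Z)$ for $H_Y$). The paper's own proof is a two-line remark to this effect; you have simply unpacked the arithmetic and been more explicit about the birational identification $Y_S\sim\widetilde{Z_S/\sigma_Z}$ needed to justify the second application of Proposition~\ref{chi-formula}, a point the paper leaves implicit.
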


\proof Since $H$ is big and nef, by Kawamata--Viehweg vanishing theorem $\chi(H)=h^0(H)$. Similarly, by Corollary \ref{cor: divisors induced}, $\chi(H_X)=h^0(H_X)$ for $X=S^{[2]}, Z_S, Y_S$ since these divisors are are big and nef. Now the proposition is a trivial application of Proposition \ref{chi-formula}.\endproof
\begin{rem}{\rm 
The divisor $H_{1,Z}$ is not necessarily big and nef, thus we can not assume that $\chi(H_{1,Z})=h^0(H_{1,Z})$. However, one can compute $\chi(H_{1,Z})$ as one computes $h^0$ in Theorem \ref{prop: h^0 for the divisors H} for the divisors $H_Z$, $H_{S^{[2]}}$, $H_Y$. Thus one obtains $\chi(H_{1,Z})=\frac{1}{2}\chi(H)+\frac{1}{16} h_{\Sigma_{1,S\times S}}$.}\end{rem}
\begin{rem}\label{rem: divisors H orthogonal to exceptional}{\rm  The map induced by the linear systems of $H_X$, for $X=S^{[2]}, Z_S, Y_S$ and $H_{1,Z}$ contracts the exceptional divisors introduced by the blow up $\beta$ on these varieties. So all of them factorizes through the singular models described in diagram \eqref{eq: diagram quotient by D8}. In particular $\varphi_{|H_{S^{[2]}}|}$ defines a map on $S^{(2)}$, $\varphi_{|H_Z|}$ and $\varphi_{|H_{1,Z}|}$ define maps on $(S\times S)/(\iota_S\times\iota_S)$ and $\varphi_{|H_Y|}$ defines a map on $(S^{(2)})/(\iota_{S}^{(2)})\simeq (S\times S)/\langle \sigma, \iota_S\times \iota_S\rangle$. The target spaces of the map $\varphi_{|H_X|}$, for $X=S^{[2]}$ (resp. $Z_S$, $Y_S$) is a copy of $W^{(2)}$ (resp. $W\times W$, $W^{(2)}$) embedded in a projective space.
}\end{rem}

\subsection{$\iota_S$ is the cover involution of the $2:1$ map $S\ra\mathbb{P}^2$}\label{subsec: S/iota=P2}
Let us now assume that $\iota_S$ fixes on $S$ one curve of genus 10. In this case, generically, $\NS(S)\simeq \langle 2\rangle\simeq \Z H$ and $\varphi_{|H|}:S\ra \mathbb{P}^2$ is a $2:1$ cover branched along a smooth plane sextic denoted by $B$. So $H$ is an ample divisor, $h^0(H)=3$, $W\simeq \mathbb{P}^2$, $C$ is a genus 10 curve, isomorphic to a plane sextic and the class of $C$ in $\NS(S)$ is $3H$. 

The automorphisms group $\Aut(S\times S)$ is $\mathcal{D}_8$ and the admissible quotients are described in Proposition 
 \ref{prop: description of 4-folds as double covers}.

Let us denote by $i_2:\mathbb{P}^2\times \mathbb{P}^2\ra \mathbb{P}^5$ the map $((x_0:x_1:x_2),(y_0:y_1:y_2))\mapsto (x_0y_0:x_0y_1+x_1y_0:x_0y_2+x_2y_1:x_1y_1:x_1y_2+x_2y_1:x_2y_2)$ which exhibits $(\mathbb{P}^2)^{(2)}$ as subvariety of $\mathbb{P}^5$. 
\begin{proposition}\label{prop: maps if NS=2}
The map $\varphi_{|H_{S^{[2]}}|}:S^{[2]}\ra \mathbb{P}^5$ is a $2^2:1$ cover of $i_2((\mathbb{P}^2)^{(2)})\subset\mathbb{P}^5$ branched on the image of $i_2(B\times B)$. It contracts the exceptional divisors, thus it factorizes through $S^{(2)}$ inducing a $4:1$ map $S^{(2)}\ra W^{(2)}$ (cf. diagram (\ref{eq: diagram quotient by D8})).
 
The map $\varphi_{|H_{Z}|}:Z_S\ra \mathbb{P}^8$ is a $2:1$ map onto $\mathbb{P}^2\times\mathbb{P}^2$ embedded in $\mathbb{P}^8$ by the Segre embedding. The branch of $\varphi_{|H_{Z}|}:Z_S\ra \mathbb{P}^2\times\mathbb{P}^2\subset \mathbb{P}^8$ is the image of $B\times B$ by the Segre embedding. This map contracts the exceptional divisors, thus it factorizes through $(S\times S)/(\iota_S\times \iota_S)$ inducing the $2:1$ map $(S\times S)/(\iota_S\times \iota_S)\ra (W\times W$) (cf. diagram (\ref{eq: diagram quotient by D8})).

The map $\varphi_{|H_Y|}:Y_S\ra \mathbb{P}^5$ is a $2:1$ map to $(\mathbb{P}^2)^{(2)}$, embedded in $\mathbb{P}^5$ by $i_2$. The branch locus of $\varphi_{|H_{Y}|}$ is $i_2(B\times B)$. This map contracts the exceptional divisors, thus it factorizes through $(S\times S)/\langle\iota_S\times \iota_S,\sigma\rangle$ inducing the $2:1$ map $(S\times S)/\langle\iota_S\times \iota_S,\sigma\rangle\ra W^{(2)}$ (cf. diagram (\ref{eq: diagram quotient by D8})).\end{proposition}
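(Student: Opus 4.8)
The plan is to treat the three maps in parallel, exploiting that each of $H_{S^{[2]}}$, $H_Z$, $H_Y$ is big and nef (Corollary \ref{cor: divisors induced}) and that by Remark \ref{rem: divisors H orthogonal to exceptional} the associated map contracts all the exceptional divisors of the blow ups, hence factors through the singular model of diagram \eqref{eq: diagram quotient by D8}, namely through $S^{(2)}$, $(S\times S)/(\iota_S\times\iota_S)$ and $S^{(2)}/\iota_S^{(2)}$ respectively. It then suffices to: (i) pin down the target $\mathbb{P}^m$ by computing $h^0$; (ii) identify the induced morphism on the singular model with the natural map to a quotient of $\mathbb{P}^2\times\mathbb{P}^2$; (iii) read off the degree and the branch locus from diagram \eqref{eq: diagram quotient by D8} and Proposition \ref{prop: description of 4-folds as double covers}.

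For (i) I would simply substitute into Theorem \ref{prop: h^0 for the divisors H}: here $\NS(S)=\Z H$ with $(H\cdot H)=2$ and $h^0(H)=3$, and $\Fix_{\iota_S}(S)=C$ with $[C]=3H$, so $(H\cdot C)=6$. Using $(H_{1,S\times S}+H_{2,S\times S})^2=2(\{pt\}\times S)+2(H\times H)+2(S\times\{pt\})$ one finds $h_{\Sigma_{S\times S}}=2(H\cdot C)^2=72$, and since the two components of $\Fix_{\sigma_Z}(Z_S)$ map isomorphically onto the diagonal of $\mathbb{P}^2\times\mathbb{P}^2$, on which $\mathcal{O}(1,1)$ restricts to $\mathcal{O}_{\mathbb{P}^2}(2)$, one gets $h_{\Sigma_Z}=8$. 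Plugging in gives $h^0(H_{S^{[2]}})=6$, $h^0(H_Z)=9$, $h^0(H_Y)=6$, matching $\dim|\mathcal{O}_{(\mathbb{P}^2)^{(2)}}(1)|$, $\dim|\mathcal{O}_{\mathbb{P}^2\times\mathbb{P}^2}(1,1)|$ and $\dim|\mathcal{O}_{(\mathbb{P}^2)^{(2)}}(1)|$.

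The core of the argument is (ii). Since $\varphi_{|H|}\colon S\to\mathbb{P}^2$ is the quotient map, $H^0(S,H)=\varphi_{|H|}^{*}H^0(\mathbb{P}^2,\mathcal{O}(1))$ and $\iota_S$ acts trivially on it. By Künneth, $H^0(S\times S,H_{1,S\times S}+H_{2,S\times S})\cong H^0(S,H)^{\otimes2}\cong H^0(\mathbb{P}^2\times\mathbb{P}^2,\mathcal{O}(1,1))$ is the pull-back of the Segre system, on which $\langle\sigma,\iota_S\times\iota_S\rangle$ acts only through the switch $\sigma$. As blow ups do not change $h^0$ and, for a double cover, $H^0$ of the base is the deck-invariant part of $H^0$ of the cover, this yields
\[
H^0(S^{[2]},H_{S^{[2]}})=\Sym^2H^0(S,H),\qquad H^0(Z_S,H_Z)=H^0(S,H)^{\otimes2},\qquad H^0(Y_S,H_Y)=\Sym^2H^0(S,H),
\]
of dimensions $6$, $9$, $6$ as in (i). Hence $|H_{S^{[2]}}|$ and $|H_Y|$ are pull-backs of the very ample system defining $i_2$ on $(\mathbb{P}^2)^{(2)}$, while $|H_Z|$ is the pull-back of the Segre system; therefore $\varphi_{|H_{S^{[2]}}|}$ (resp. $\varphi_{|H_Z|}$, resp. $\varphi_{|H_Y|}$) is the composition of $i_2$ (resp. the Segre embedding, resp. $i_2$) with the morphism from the corresponding singular model to $(\mathbb{P}^2)^{(2)}$ (resp. $\mathbb{P}^2\times\mathbb{P}^2$, resp. $(\mathbb{P}^2)^{(2)}$) induced by $\varphi_{|H|}\times\varphi_{|H|}$, and in particular each $\varphi_{|H_X|}$ is a finite morphism onto exactly the stated image. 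For (iii), the degrees $2^2$, $2$, $2$ of $S^{(2)}\to W^{(2)}$, $(S\times S)/(\iota_S\times\iota_S)\to W\times W$ and $S^{(2)}/\iota_S^{(2)}\to W^{(2)}$ (recall $W\simeq\mathbb{P}^2$) are read directly off diagram \eqref{eq: diagram quotient by D8} and are unchanged by the birational maps to $S^{[2]}$, $Z_S$, $Y_S$, whereas the branch loci are obtained by transporting those of Proposition \ref{prop: description of 4-folds as double covers} and \S\ref{subsubsec: S2/i2} through $\varphi_{|H|}\times\varphi_{|H|}$, using the isomorphism $C\simeq B$ induced by $\varphi_{|H|}$.

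The only genuinely non-formal step is the identification of the linear systems in (ii): once $|H_X|$ is recognised as the pull-back of a very ample system on $\mathbb{P}^2\times\mathbb{P}^2$ or on $(\mathbb{P}^2)^{(2)}$, base-point-freeness and the absence of contractions beyond those already handled in Remark \ref{rem: divisors H orthogonal to exceptional} are automatic, so $\varphi_{|H_X|}$ is forced to be the claimed cover. The part demanding most care is keeping track of the reducible branch divisors and of which stratum sits over which locus of $\mathbb{P}^2\times\mathbb{P}^2$; this is, however, already contained in the local analysis underlying Proposition \ref{prop: description of 4-folds as double covers}.
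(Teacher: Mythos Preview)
Your proposal is correct and follows essentially the same strategy as the paper: use Remark \ref{rem: divisors H orthogonal to exceptional} to factor through the singular models, compute $h^0$ via Theorem \ref{prop: h^0 for the divisors H} (your intersection computations $h_{\Sigma_{S\times S}}=2(H\cdot C)^2=72$ and $h_{\Sigma_Z}=8$ agree with the paper's), and then identify the images via the compatibility with $\varphi_{|H|}\times\varphi_{|H|}$ and diagram \eqref{eq: diagram quotient by D8}. The only notable difference is that in step (ii) you identify the global sections explicitly as $\Sym^2 H^0(S,H)$ and $H^0(S,H)^{\otimes 2}$ via K\"unneth and invariants, whereas the paper simply records the commutative squares relating $S\times S$, its quotients, and $\mathbb{P}^2\times\mathbb{P}^2$; your formulation makes more transparent why the targets are embedded precisely by $i_2$ and the Segre map, but the content is the same.
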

\proof By Remark \ref{rem: divisors H orthogonal to exceptional} one obtains that the map described contracts the exceptional divisors and are defined on the singular models of the 4-folds considered. 
The number $h^0(H_{S^{[2]}})$ can be computed by Theorem \ref{prop: h^0 for the divisors H}. The image of $\varphi_{|H_{S^{[2]}}|}$ is $(\mathbb{P}^2)^{(2)}$ by the commutativity of the diagram
$$\xymatrix{ S\times S\ar[r]^-{\varphi_{|H|}\times \varphi_{|H|}}_{4:1}\ar[d]_{2:1}&\mathbb{P}^2\times \mathbb{P}^2\ar[d]^{2:1}\\
S^{(2)}\ar[r]^{\varphi_{\left|H_{S^{[2]}}\right|}}_{4:1}&(\mathbb{P}^2)^{(2)}}.$$
In order to compute $h^0(H_{Z})$ one has to recall that the class of $\Fix_{\iota_S}(S)\times \Fix_{\iota_S}(S)\subset S\times S$ is $(3H_{1,S})(3H_{2,S})$, so $h_{\Sigma_{S\times S}}=(H_{1,S}+H_{2,S})^2(3H_{1,S})(3H_{2,S})=9(H_{1,S}^3H_{2,S}+H_{2,S}^3H_{1,S}+2H_{1,S}^2H_{2,S}^2)$. Since $H_{i,S}$ is the pull-back of a divisor on $S$, $H_{i,S}^3=0$ and $H_{1,S}^2H_{2,S}^2=2^2$ since $H^2=2$ on $S$. Thus  $h_{\Sigma_{S\times S}}=9\cdot 2 \cdot 4=72$.
By Theorem \ref{prop: h^0 for the divisors H}, $h^0(H_Z)=\frac{9}{2}+\frac{72}{16}=9$. The image of $\varphi_{|H_Z|}$ is $\mathbb{P}^2\times\mathbb{P}^2$ by the commutativity of the diagram
$$\xymatrix{ S\times S\ar[r]^{\varphi_{|H|}\times \varphi_{|H|}}_{4:1}\ar[d]_{2:1}&\mathbb{P}^2\times \mathbb{P}^2\\
(S\times S)/(\iota_S\times\iota_S)\ar[ur]_{\varphi_{\left|H_Z\right|}}^{2:1}.}$$  
In order to compute $h^0(H_{Y})$ one has to recall that $\sigma_Z$ fixes on $Z$ the image of $\Delta_S\subset S\times S$ and the image of $\Gamma_S\subset S\times S$. First we observe that on $S\times S$ it holds $(H_{1,S}+H_{2,S})^2\Delta_S=(H_{1,S}+H_{2,S})^2\Gamma_S=4(H\cdot H)$, which in our case implies $(H_{1,S}+H_{2,S})^2\Delta_S=(H_{1,S}+H_{2,S})^2\Gamma_S=8$. Then we observe that, since $H$ is a movable divisor, generically the points in $(H_{1,S}+H_{2,S})^2\Delta_S$ and $(H_{1,S}+H_{2,S})^2\Gamma_S$ are not contained in the fixed locus of $\iota_s\times \iota_S$, the involution $\iota_S\times\iota_S$ is non trivial on these points and the quotient by $\iota_S\times \iota_S$ identifies pairs of these points. Hence the intersection between $H_Z$ and the image of $\Delta_S$ (resp. $\Gamma_S$) in $Z$ consists of 4 points and thus $h_{\Sigma_Z}=4+4=8$.
By Theorem \ref{prop: h^0 for the divisors H}, $h^0(H_Y)=\frac{9}{2}+\frac{8}{16}+1=6.$ 
The image of $\varphi_{|H_{Y}|}$ is $(\mathbb{P}^2)^{(2)}$ by the commutativity of the diagram:
$$\xymatrix{ S\times S\ar[r]^{\varphi_{|H|}\times \varphi_{|H|}}_{4:1}\ar[d]_{2:1}&\mathbb{P}^2\times \mathbb{P}^2\ar[r]_{2:1}&(\mathbb{P}^2)^{(2)}\ar@{^{(}->}[r]^{i}&\mathbb{P}^5\\
(S\times S)/\langle\iota_S\times\iota_S\rangle\ar[r]_{2:1}&(S\times S)/\langle \iota_S\times \iota_S,\sigma\rangle\ar[ur]_{2:1}&}.$$\endproof

\begin{rem}{\rm The map $\varphi_{H_{1,Z}}$ contracts the exceptional divisor and induces on $(S\times S)/(\iota_S\times \iota_S)$ the map $(S\times S)/(\iota_S\times \iota_S)\ra W\simeq \mathbb{P}^2$ whose generic fibers are isomorphic to $S$. Since $H_{1,Z}$ is not a big divisor, we do not know a priori that $\chi(H_{1,Z})=h^0(H_{1,Z})$. In any case we observe that the geometric description of the map $\varphi_{|H_{1,Z}|}$ suggests that $h^0(H_{1,Z})$ should be 3 and applying Remark \ref{rem: divisors H orthogonal to exceptional} one finds $\chi (H_{1,Z})=3$. Indeed by K\"unneth formula one finds $h^{0}(H_{1,S})=3$, $h^i(H_{1,S})=0$, $i=1,3,4$ and $h^2(H_{1,S})=3$, and since $h_{\Sigma_{1, S\times S}}=0$, one obtains $\chi(H_{1,Z})=\chi(H_{1,S})/2=6/2$.}\end{rem}

\subsubsection{A special case: $S$ is the double cover of $\mathbb{P}^2$ branched on a line and a (possibly reducible) quintic.}\label{subsubsec: S/ioa=P2, branch=line+quintic}
In this case the hypotheses of Proposition \ref{prop: description of 4-folds as double covers} are not satisfied, since the fixed locus of $\iota_S$ contains at least 2 curves. On the other hand there is a nice birational description of the Calabi--Yau involved in our construction. 
So let us assume that $S$ is the double cover of $\mathbb{P}^2$ branched along a line and a quintic. In this case $W$ is a blow up of $\mathbb{P}^2$, and if the quintic is smooth and intersects the line transversally, it is a blow up of $\mathbb{P}^2$ in 5 (collinear) points. We denote by $p_i:S\times S\ra S$ the $i$-th projection. An equation of a birational (singular) model of $S\simeq p_1(S\times S)$ is given by $$X^2=x_0f_5(x_0:x_1:x_2)$$
which exhibits $S$ as double cover of $\mathbb{P}^2_{(x_0:x_1:x_2)}$. Let us denote by $Y^2=y_0f_5(y_0:y_1:y_2)$ the analogous equation for $S\simeq p_2(S\times S)$. The action of $\iota_S\times \iota_S$ is given by $(X,(x_0:x_1:x_2);Y,(y_0:y_1:y_2))\ra (-X,(x_0:x_1:x_2);-Y,(y_0:y_1:y_2))$.

We now consider the affine equation of $p_1(S\times S)$ and $p_2(S\times S)$ obtained by putting $x_0=1$ and $y_0=1$. The invariant functions for $\iota_S\times \iota_S$ are $Z:=XY$, $a_1=x_1$, $a_2=x_2$, $a_3=y_1$, $a_4=y_2$. Then a birational equation for $(S\times S)/(\iota_S\times \iota_S)$ (and thus a birational model of $Z_S$), is given by 
$$Z^2=f_5(1:a_1:a_2)f_5(1:a_3:a_4).$$
This equation exhibits $Z_S$ as double cover of the affine subspace of $\mathbb{P}^4_{(a_0:a_1:a_2:a_3:a_4)}$ given by $a_0=1$. It is clearly possible to introduce the variable $a_0$ in order to obtain a homogenous polynomial $F_{10}(a_0:a_1:a_2:a_3:a_4)$ of degree 10  which reduces to $f_5(1:a_1:a_2)f_5(1:a_3:a_4)$ if $a_0=1$. So $Z_S$ is birational to a $2:1$ cover of $\mathbb{P}^4$ branched along a (possibly singular) $3$-fold of degree 10, denoted by $B$.  

On $\mathbb{P}^4$, the map $\sigma_{\mathbb{P}^4}:(a_0:a_1:a_2:a_3:a_4)\mapsto (a_0:a_3:a_4:a_1:a_2)$ acts preserving the homogeneous polynomial $F_{10}(a_0:a_1:a_2:a_3:a_4)$. The map $\sigma_Z$ is induced on $Z_S$ by the projective map $\sigma_{\mathbb{P}^4}$. Denoted by $\pi$ the quotient map $\pi:\mathbb{P}^4\ra\mathbb{P}^4/\sigma_{\mathbb{P}^4}$, we obtain that $Z_S/\sigma_Z$ and $Y_S$ are birational to a double cover of $\pi(\mathbb{P}^4)$ branched over $\pi(V(F_{10}(a_0:a_1:a_2:a_3:a_4)))$ (where $V(F_{10}(a_0:a_1:a_2:a_3:a_4))$ is the zero locus of the polynomial $F_{10}$).

In order to better describe $\mathbb{P}^4/\sigma_{\mathbb{P}^4}$ it is convenient to apply the changes of coordinates $b_0:=a_0$, $b_1:=(a_1+a_2)/2$, $b_2:=(a_3+a_4)/2$, $b_3:=(a_1-a_2)/2$, $b_4:=(a_3-a_4)/2$. With these new coordinates $\sigma_{\mathbb{P}^4}$ is the map  $\sigma_{\mathbb{P}^4}:(b_0:b_1:b_2:b_3:b_4)\mapsto (b_0:b_1:b_2:-b_3:-b_4)$, and $\mathbb{P}^4/\sigma_{\mathbb{P}^4}$ is mapped to the $4$-dimensional singular subspace of $\mathbb{P}^8_{(z_0:\ldots:z_8)}$ given by the set-theoretic complete intersection of 4 singular quadrics $M:=V(z_0z_1=z_5^2,\ z_0z_2=z_6^2,\ z_1z_2=z_7^2,\ z_3z_4=z_8^2)$, where $z_i:=b_i^2$ for $i=0,\ldots, 4$, $z_5:=b_0b_1$, $z_6:=b_0b_2$, $z_7:=b_1b_2$, $z_8:=b_3b_4$. The space $M$ is singular, $\Sing(M)=\left(M\cap V(z_0,z_1,z_2,z_5,z_6,z_7)\right)\cup\left(M\cap V(z_3,z_4,z_8)\right)$.

The image of $V(F_{10})$ under the quotient map is a 3-fold $T$ of degree 5 in $\mathbb{P}^8$,  Then $Y_S$ is birational to a double cover of $M$ branched along  $M\cap T$.

\subsection{$\iota_S$ is the cover involution of the $2:1$ map $S\ra\mathbb{P}^1\times \mathbb{P}^1$}\label{subsec: S/iota=P1P1}

Let us now assume that $\iota_S$ fixes on $S$ one curve of genus 9 isomorphic to the branch curve $B$, which has bidegree $(4,4)$ in $\mathbb{P}^1\times \mathbb{P}^1$. In this case, generically, $\NS(S)\simeq U(2)\simeq \Z l\oplus \Z m$ and $\varphi_{|l+m|}:S\ra \mathbb{P}^1\times \mathbb{P}^1\subset\mathbb{P}^3$ is a $2:1$ map on the image and $\mathbb{P}^1\times \mathbb{P}^1$ is embedded in $\mathbb{P}^3$ by the Segre embedding. We denote $H:=l+m\in \NS(S)$. The branch divisor of the $2:1$ cover $S\ra\mathbb{P}^1\times \mathbb{P}^1\subset \mathbb{P}^3$ is thus represented by $2H$. We observe that $W\simeq \mathbb{P}^1\times \mathbb{P}^1$.

The automorphisms group $\Aut(S\times S)$ is $\mathcal{D}_8$ and the admissible quotients are described in Proposition \ref{prop: description of 4-folds as double covers}.

Let us denote by $i_3:\mathbb{P}^3\times \mathbb{P}^3\ra \mathbb{P}^9$ the map $((x_0:x_1:x_2:x_3),(y_0:y_1:y_2:y_3))\mapsto (x_0y_0 :x_0y_1+x_1y_0: x_0y_2+x_2y_1: x_0y_3+x_3y_0: x_1y_1: x_1y_2+x_2y_1: x_1y_3+x_3y_1: x_2y_2: x_2y_3+x_3y_2: x_3y_3)$ which exhibits $(\mathbb{P}^3)^{(2)}$ as a subvariety of $\mathbb{P}^9$. 
\begin{proposition}\label{prop: maps if NS=U(2)}
The map $\varphi_{|H_{S^{[2]}}|}:S^{[2]}\ra \mathbb{P}^9$ is a $2^2:1$ cover of $i_3\left(\left(\mathbb{P}^3\right)^{(2)}\right)\subset\mathbb{P}^5$ branched on the image of $i_3(B\times B)$. It contracts the exceptional divisors, thus it factorizes through $S^{(2)}$ inducing a $4:1$ map $S^{(2)}\ra W^{(2)}$ (cf. diagram \eqref{eq: diagram quotient by D8}).
 
The map $\varphi_{|H_{Z}|}:Z_S\ra \mathbb{P}^{15}$ is a $2:1$ map onto $\mathbb{P}^1\times\mathbb{P}^1\times \mathbb{P}^1\times \mathbb{P}^1\subset \mathbb{P}^3\times \mathbb{P}^3$ embedded in $\mathbb{P}^{15}$ by the Segre embedding. The branch of $\varphi_{|H_{Z}|}:Z_S\ra \mathbb{P}^1\times\mathbb{P}^1\times \mathbb{P}^1\times\mathbb{P}^1\subset \mathbb{P}^{15}$ is the image of $B\times B $ by the Segre embedding. This map contracts the exceptional divisors, thus it factorizes through $(S\times S)/(\iota_S\times \iota_S)$ inducing the $2:1$ map $(S\times S)/(\iota_S\times \iota_S)\ra (W\times W$) (cf. diagram \eqref{eq: diagram quotient by D8}).

The map $\varphi_{|H_Y|}:Y_S\ra \mathbb{P}^{9}$ is a $2:1$ map to $(\mathbb{P}^3)^{(2)}$, embedded in $\mathbb{P}^9$ by $i_3$. The branch locus is $i_3(B\times B)$.This map contracts the exceptional divisors, thus it factorizes through $(S\times S)/\langle\iota_S\times \iota_S,\sigma\rangle$ inducing the $2:1$ map $(S\times S)/\langle\iota_S\times \iota_S,\sigma\rangle\ra W^{(2)}$ (cf. diagram \eqref{eq: diagram quotient by D8}).

\end{proposition}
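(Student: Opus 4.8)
The plan is to run the argument of the proof of Proposition \ref{prop: maps if NS=2} almost verbatim, changing only the numerical input. Here $W\simeq\mathbb{P}^1\times\mathbb{P}^1$ and $\varphi_{|H|}\colon S\to\mathbb{P}^1\times\mathbb{P}^1\subset\mathbb{P}^3$ is the degree two map in question, so $H^2=4$ and $h^0(H)=4$; moreover $\varphi_{|H|}^*\of_{\mathbb{P}^1\times\mathbb{P}^1}(1,1)=H$ and the branch curve $B$ has bidegree $(4,4)=2\cdot(2,2)$, so the ramification curve $\Fix_{\iota_S}(S)$ lies in $|2H|$ (instead of the $|3H|$ of the plane-sextic case). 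First I would invoke Remark \ref{rem: divisors H orthogonal to exceptional}: each of $\varphi_{|H_{S^{[2]}}|}$, $\varphi_{|H_Z|}$, $\varphi_{|H_Y|}$ contracts every exceptional divisor of the relevant blow-up $\beta$, hence factors through the singular models $S^{(2)}$, $(S\times S)/(\iota_S\times\iota_S)$, $(S\times S)/\langle\sigma,\iota_S\times\iota_S\rangle$ appearing in diagram \eqref{eq: diagram quotient by D8}, so it suffices to describe the induced maps on those quotients.

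Then I would compute the three projective dimensions with Theorem \ref{prop: h^0 for the divisors H}. The Beauville--Bogomolov--Fujiki form gives $h^0(H_{S^{[2]}})=\frac18(H^2+4)(H^2+6)=10$. For $H_Z$ one needs $h_{\Sigma_{S\times S}}=(H_{1,S\times S}+H_{2,S\times S})^2\cdot(2H_{1,S\times S})(2H_{2,S\times S})$; since the $H_{i,S\times S}$ are pulled back from the surface $S$ we have $H_{i,S\times S}^3=0$ and $H_{1,S\times S}^2H_{2,S\times S}^2=(H^2)^2=16$, so this number is $8\cdot16=128$ and $h^0(H_Z)=\frac12\cdot16+\frac1{16}\cdot128=16$. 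For $H_Y$ one uses that $\sigma_Z$ fixes on $Z_S$ the strict transforms of the images of $\Delta_S$ and $\Gamma_S$, that $(H_{1,S\times S}+H_{2,S\times S})^2\cdot\Delta_S=(H_{1,S\times S}+H_{2,S\times S})^2\cdot\Gamma_S=(2H)^2=16$ (here $H=l+m$ is $\iota_S$-invariant), and then the same movability argument as in Proposition \ref{prop: maps if NS=2} --- a general member of $|H|$ misses $\Fix_{\iota_S}(S)$, so these $16$ points are identified in pairs under $\iota_S\times\iota_S$ --- to obtain $h_{\Sigma_Z}=8+8=16$ and $h^0(H_Y)=\frac12\cdot16+\frac1{16}\cdot16+1=10$. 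These yield the target spaces $\mathbb{P}^9$, $\mathbb{P}^{15}$, $\mathbb{P}^9$ of the statement.

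Finally I would pin down images, degrees and branch loci through commutative squares just as in the $\mathbb{P}^2$-case, e.g.
$$
\xymatrix{ S\times S\ar[r]\ar[d]&(\mathbb{P}^1\times\mathbb{P}^1)^2\ar[d]\\ S^{(2)}\ar[r]&(\mathbb{P}^1\times\mathbb{P}^1)^{(2)} }
$$
and its analogues for $Z_S$ and $Y_S$, using that $\varphi_{|H|}\times\varphi_{|H|}$ is $4{:}1$ onto $(\mathbb{P}^1\times\mathbb{P}^1)^2\subset\mathbb{P}^3\times\mathbb{P}^3$ with branch $(B\times\mathbb{P}^1\times\mathbb{P}^1)\cup(\mathbb{P}^1\times\mathbb{P}^1\times B)$; composing with the Segre embeddings $\mathbb{P}^1\times\mathbb{P}^1\hookrightarrow\mathbb{P}^3$ and with $i_3$, and descending this branch locus through the quotients, produces the stated degrees ($2^2{:}1$, $2{:}1$, $2{:}1$) and the branch divisors $i_3(B\times B)$, the Segre image of $B\times B$, and $i_3(B\times B)$ respectively. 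The only step that requires real care --- the closest thing to an obstacle --- is the bookkeeping of these intersection numbers and of how the branch divisor transforms under each of the three quotient maps; everything else is a transcription of the proof of Proposition \ref{prop: maps if NS=2}.
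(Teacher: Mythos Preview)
Your proposal is correct and takes exactly the approach the paper intends: the paper's own proof of this proposition is simply ``The proof is analogous to the one of Proposition \ref{prop: maps if NS=2} and so we omit it,'' and you have carried out precisely that analogy with the correct numerical substitutions ($H^2=4$, $h^0(H)=4$, $[\Fix_{\iota_S}(S)]=2H$), obtaining $h^0(H_{S^{[2]}})=10$, $h^0(H_Z)=16$, $h^0(H_Y)=10$ as required. Your intersection computations $h_{\Sigma_{S\times S}}=128$ and $h_{\Sigma_Z}=16$ and the commutative-diagram identification of images and branch loci are exactly the bookkeeping the paper leaves to the reader.
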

The proof is analogous to the one of Proposition \ref{prop: maps if NS=2} and so we omit it. On the other hand in this case some explicit equations can be written and these can be used to describe some maps, associated to divisors which are not necessarily big and nef. 
 
The notation will be analogous to the one introduced above, where we substitute $H$ by $l$ or $m$. We observe that $l$ and $m$ are not big divisors on $S$, thus $l_X$, $m_X$ for $X=S\times S, Z_S, Y_S. S^{[2]}$ are not necessarily big divisors. 
 
An equation for the surface $S$, which exhibits $S$ as double cover of $\mathbb{P}^1_{(x_0:x_1)}\times \mathbb{P}^1_{(x_2:x_3)}$ is $X^2=f_{4,4}((x_0:x_1):(x_2:x_3))$, where $f_{4,4}$ is a homogeneous polynomials of bidegree $(4,4)$ in $\mathbb{P}^1\times \mathbb{P}^1$. Composing the map $S\ra\mathbb{P}^1\times \mathbb{P}^1$ with the projection of $\mathbb{P}^1\times \mathbb{P}^1$ on the first factor, we obtain a map $S\ra \mathbb{P}_{(x_0:x_1)}^1$, which is a genus 1 fibration. The fibers over a point $(\overline{x_0}:\overline{x_1})$ is the genus one curve $X^2=f_{4,4}((\overline{x_0}:\overline{x_1}):(x_2:x_3))$. The map $S\ra \mathbb{P}_{(x_0:x_1)}^1$ coincides with the map $\varphi_{|l_2|}$. Similarly one obtains another genus 1 fibration, projecting $\mathbb{P}^1\times \mathbb{P}^1$ on the second factor.
Here we describe some models and fibrations on $S^{[2]}$, $Z_S$ and $Y_S$ induced by the maps $S\ra\mathbb{P}^1\times\mathbb{P}^1$ and $S\ra\mathbb{P}^1$. In particular we will prove the following 

\begin{proposition}\label{prop: maps S/iota=P1P1}
The \hk 4-fold $S^{[2]}$ admits a Lagrangian fibration $f_{S^{[2]}}=\varphi_{|l_{S^{[2]}}|}:S^{[2]}\ra\mathbb{P}^2\simeq (\mathbb{P}^1)^{(2)}$ whose generic fibers are the product of two non isogenous elliptic curves.

The Calabi--Yau 4-fold $Z_S$ admits:\begin{itemize}\item a fibration $\varphi_{|l_{1,Z}|}:Z_S\ra\mathbb{P}^1$ whose generic fibers are Calabi--Yau 3-folds which are the double cover of $\mathbb{P}^1\times \mathbb{P}^1\times\mathbb{P}^1$ branched along the union of 5 curves of tridegree $(1,0,0)$,$(1,0,0)$,$(1,0,0)$,$(1,0,0)$,$(0,4,4)$;
\item a fibration $\varphi_{|l_{1,Z}+m_{1,Z}|}:Z_S\ra \mathbb{P}^1\times \mathbb{P}^1$ whose generic fibers are isomorphic to $S$;
\item a fibration $f_Z:=\varphi_{|l_Z|}:Z_S\ra\mathbb{P}^1\times\mathbb{P}^1$ whose generic fibers are the Kummer surfaces of the product of two non-isogenous elliptic curves;
\item a fibration $\varphi_{|l_{Z}+m_{1,Z}|}:Z_S\ra\mathbb{P}^1\times \mathbb{P}^1\times \mathbb{P}^1$ such that the generic fibers are smooth irreducible curves of genus 1.
\end{itemize}
The Calabi--Yau 4-fold $Y_S$ admits a fibrations $f_Y:=\varphi_{|l_Y|}:Y_S\ra (\mathbb{P}^1)^{(2)}\simeq \mathbb{P}^2$ whose fibers are the Kummer surfaces of the product of two non-isogenous elliptic curves. The fibration $f_Y$ is induced on $Y_S$ both by $f_{S^{[2]}}$ and by $f_Z$.
\end{proposition}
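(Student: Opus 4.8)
The plan is to exploit the two genus-$1$ fibrations $\varphi_{|l|},\varphi_{|m|}\colon S\to\mathbb{P}^1$ together with the observation that $\iota_S$ acts trivially on the base of each of them: on a fibre $E_t=\{X^2=f_{4,4}(\bar t;(x_2\!:\!x_3))\}$ of $\varphi_{|l|}$ the involution $\iota_S$ restricts to the covering involution $X\mapsto -X$ of the double cover $E_t\to\mathbb{P}^1_{(x_2:x_3)}$, which is the $(-1)$-involution of the genus-$1$ curve $E_t$; in particular $\iota_S$ preserves every fibre and hence induces the identity on $\mathbb{P}^1$ (and symmetrically for $m$). Therefore the maps $\varphi_{|l|}\circ p_i$, $\varphi_{|l|}\times\varphi_{|l|}$, $\varphi_{|H|}\circ p_1$ and $(\varphi_{|H|}\circ p_1,\varphi_{|l|}\circ p_2)$ on $S\times S$ are invariant under the relevant subgroups of $\mathcal D_8$ and descend along all the quotient and blow-up maps of diagram \eqref{eq: diagram quotient by D8}; since $l$ and $m$ are orthogonal to the exceptional classes, the maps $\varphi_{|l_X|}$ contract the exceptional divisors exactly as in Remark \ref{rem: divisors H orthogonal to exceptional}, so each factors through the corresponding singular model $S^{(2)}$, $(S\times S)/(\iota_S\times\iota_S)$ or $S^{(2)}/\iota_S^{(2)}$, and one identifies it with the geometric descent just described, reading off the target and the value of $h^0$ (or of $\chi$) as in Theorem \ref{prop: h^0 for the divisors H} and the remarks after it.

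\textbf{The Hilbert scheme.} The elliptic fibration $\varphi_{|l|}$ induces $S^{(2)}\to(\mathbb{P}^1)^{(2)}\simeq\mathbb{P}^2$ and hence a morphism $f_{S^{[2]}}\colon S^{[2]}\to\mathbb{P}^2$ with $f_{S^{[2]}}^{*}\mathcal O(1)=l_{S^{[2]}}$ and $h^0(l_{S^{[2]}})=3$. The fibre over a very general unordered pair $\{t_1,t_2\}$ with $t_1\neq t_2$ is $E_{t_1}\times E_{t_2}$ --- already smooth in $S^{(2)}$, hence the same in $S^{[2]}$ --- and $E_{t_1},E_{t_2}$ are non-isogenous because $\varphi_{|l|}$ is non-isotrivial. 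Since $\dim\mathbb{P}^2=\tfrac12\dim S^{[2]}$ and $S^{[2]}$ is irreducible holomorphic symplectic, $f_{S^{[2]}}$ is forced to be a Lagrangian fibration by Matsushita's theorem.

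\textbf{The Calabi--Yau $Z_S$.} The four classes are pulled back from the $\iota_S\times\iota_S$-invariant divisors $p_1^{*}l$, $p_1^{*}l+p_1^{*}m=p_1^{*}H$, $p_1^{*}l+p_2^{*}l$ and $p_1^{*}H+p_2^{*}l$ on $S\times S$, whose linear systems give the maps $\varphi_{|l|}\circ p_1$, $\varphi_{|H|}\circ p_1$, $(\varphi_{|l|},\varphi_{|l|})$ and $(\varphi_{|H|}\circ p_1,\varphi_{|l|}\circ p_2)$ with targets $\mathbb{P}^1$, $\mathbb{P}^1\times\mathbb{P}^1$, $\mathbb{P}^1\times\mathbb{P}^1$, $(\mathbb{P}^1)^3$. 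For each one I would compute the general fibre of the descended map on $(S\times S)/(\iota_S\times\iota_S)$ as the quotient of the corresponding fibre of the map on $S\times S$ by the restriction of $\iota_S\times\iota_S$: over $\bar t$ this is $(E_{\bar t}\times S)/(\iota_S\times\iota_S)$, a Borcea--Voisin type quotient realised as a double cover of $(E_{\bar t}/\iota_S)\times(S/\iota_S)=(\mathbb{P}^1)^3$ branched along the $4$ fibres of tridegree $(1,0,0)$ over the branch points of $E_{\bar t}\to\mathbb{P}^1$ and along the surface $\mathbb{P}^1\times B$ of tridegree $(0,4,4)$ (total branch $\mathcal O(4,4,4)=-2K_{(\mathbb{P}^1)^3}$); over a general point of $\mathbb{P}^1\times\mathbb{P}^1$ it is $(\{p_0,\iota_S(p_0)\}\times S)/(\iota_S\times\iota_S)\cong S$; over $(\bar s,\bar t)$ it is $(E_{\bar s}\times E_{\bar t})/(\iota_S\times\iota_S)$, the singular Kummer quotient $(E_{\bar s}\times E_{\bar t})/\!\pm 1$; and over a general point of $(\mathbb{P}^1)^3$ it is $(\{p_0,\iota_S(p_0)\}\times E_{\bar t})/(\iota_S\times\iota_S)\cong E_{\bar t}$. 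Since $\mathrm{Sing}((S\times S)/(\iota_S\times\iota_S))$ is the image of $B\times B$, which meets these four fibres respectively in $4$ copies of $B$, in the empty set, in the $16$ points $(E_{\bar s}\times E_{\bar t})[2]$, and in the empty set, the blow-up $Z_S$ restricts on the general fibre to the crepant (resp.\ minimal) resolution of the quotient above: a Borcea--Voisin Calabi--Yau $3$-fold, a copy of $S$, the Kummer surface $\mathrm{Km}(E_{\bar s}\times E_{\bar t})$ of a product of two (very generally non-isogenous) elliptic curves, and a smooth genus-$1$ curve.

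\textbf{The Calabi--Yau $Y_S$ and the main obstacle.} The fibration $f_{S^{[2]}}$ is $\iota_S^{[2]}$-equivariant and, since $\varphi_{|l|}\circ\iota_S=\varphi_{|l|}$, acts trivially on $\mathbb{P}^2=(\mathbb{P}^1)^{(2)}$; hence it descends to $S^{[2]}/\iota_S^{[2]}$ and to $f_Y\colon Y_S\to(\mathbb{P}^1)^{(2)}$ with $f_Y^{*}\mathcal O(1)=l_Y$. Symmetrically $f_Z$ is $\sigma_Z$-equivariant with $\sigma_Z$ acting on its base $\mathbb{P}^1\times\mathbb{P}^1$ by the transposition of the factors, so $f_Z$ descends to $Z_S/\sigma_Z\to(\mathbb{P}^1\times\mathbb{P}^1)/\mathfrak S_2=(\mathbb{P}^1)^{(2)}$; under the birational identifications $Y_S\sim Z_S/\sigma_Z\sim S^{[2]}/\iota_S^{[2]}$ of Proposition \ref{prop: ZS/sigma birat to YS} these two descended maps agree, so $f_Y$ is induced both by $f_{S^{[2]}}$ and by $f_Z$. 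The general fibre of $f_Y$ over $\{t_1,t_2\}$, $t_1\neq t_2$, is $(E_{t_1}\times E_{t_2})/(\iota_S\times\iota_S)$; because $\mathrm{Fix}(\iota_S^{[2]})=B^{[2]}\sqcup(S/\iota_S)$ and the general fibre avoids the exceptional divisor of $S^{[2]}\to S^{(2)}$, it meets $\mathrm{Fix}(\iota_S^{[2]})$ only in the $16$ points of $(E_{t_1}\times E_{t_2})[2]$, so $Y_S$ restricts on it to the minimal resolution $\mathrm{Km}(E_{t_1}\times E_{t_2})$. The hard part will be making precise the identification of each linear system $|l_X|$ with the geometric map above: several of these divisors are nef but not big, so Kawamata--Viehweg does not yield $\chi=h^0$ directly, and one must combine the Euler--characteristic computation of Theorem \ref{prop: h^0 for the divisors H} (and its remarks) with the factorisation through the singular models of Remark \ref{rem: divisors H orthogonal to exceptional}, and, for $l_{S^{[2]}}$, with the structure theory of Lagrangian fibrations to pin down $h^0=3$. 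A secondary point is checking that the resolutions $Z_S\to(S\times S)/(\iota_S\times\iota_S)$ and $Y_S\to S^{[2]}/\iota_S^{[2]}$ restrict to crepant, resp.\ minimal, resolutions on the general fibres, which amounts to the transversality of $\overline{B\times B}$, resp.\ $\mathrm{Fix}(\iota_S^{[2]})$, with those fibres and holds for general parameters.
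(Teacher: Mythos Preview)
Your proposal is correct and follows essentially the same route as the paper: both descend the fibrations from $S\times S$ through the quotient maps of diagram \eqref{eq: diagram quotient by D8}, identify the general fibre as the quotient of the corresponding fibre on $S\times S$ by the restriction of $\iota_S\times\iota_S$ (resp.\ $\sigma$, $\iota_S^{[2]}$), and then note that the blow-up resolves the resulting singularities to give the stated smooth fibre. The only real difference is one of presentation. The paper organises the $Z_S$ part around the explicit affine equation
\[
Z^2=f_{4,4}((x_0:x_1),(x_2:x_3))\,f_{4,4}((y_0:y_1),(y_2:y_3))
\]
for $(S\times S)/(\iota_S\times\iota_S)$ as a double cover of $(\mathbb{P}^1)^4$, and obtains each fibration and fibre by projecting onto the appropriate subset of the four $\mathbb{P}^1$ factors; the Kummer identification then comes out as a double cover of $\mathbb{P}^1\times\mathbb{P}^1$ branched on eight lines, hence with $16$ nodes, invoking Nikulin's characterisation. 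Your argument computes the same fibres directly as quotients $(E_{\bar s}\times E_{\bar t})/\pm 1$, etc., which is equivalent but bypasses the equation. For the Lagrangian statement on $S^{[2]}$ the paper simply cites the known example in Sawon, whereas you invoke Matsushita's theorem; both are fine. Your honest remark that the identification $\chi=h^0$ is not forced by Kawamata--Viehweg for these non-big classes is also in line with the paper, which leaves this as a remark after the proof rather than part of it.
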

\proof
The map $\varphi_{|l_{S^{[2]}}|}:S^{2}\ra \mathbb{P}^2\simeq \left(\mathbb{P}^1\right)^{(2)}$ gives Lagrangian fibrations on $S^{[2]}$ whose fibers are the product of the fibers of $\varphi_{|l_i|}:S\ra\mathbb{P}^1$ of each factor in $S\times S$. We denote this fibration by $f_{S^{[2]}}$, following \cite[Example 3.5]{sawon03}. The involution $\iota_{S^{[2]}}$ acts on the fibers of  $\varphi_{|l_{S^{[2]}}|}:S^{2}\ra \mathbb{P}^2\simeq \left(\mathbb{P}^1\right)^{(2)}$ preserving each fibers, so it acts as an involution on each fiber. On the other hand we know that the fixed locus of $\iota_S^{[2]}$ consists of two surfaces, one isomorphic to $(\Fix_{\iota_S}(S))^{[2]}$ and one isomorphic to $\mathbb{P}^1\times\mathbb{P}^1$. The surface $(\Fix_{\iota_S}(S))^{[2]}$ intersects the fiber in 16 points and indeed $\iota_S^{[2]}$ restricts to each fiber to the involution which sends each point of an Abelian surface in its opposite. The surface isomorphic to $\mathbb{P}^1\times\mathbb{P}^1$ in 
the fixed locus of $\iota_{S}^{[2]}$ maps to the singular locus of $(\mathbb{P}^1\times\mathbb{P}^1)^{(2)}$ so it does not intersect the generic fiber. Hence the fibration $\varphi_{|l{i,S^{[2]}}|}:S^{2}\ra \mathbb{P}^2\simeq \left(\mathbb{P}^1\right)^{(2)}$ induces on $S^{[2]}/\iota_S^{[2]}$ a fibration $f:S^{[2]}/\iota_S^{[2]} \ra\mathbb{P}^2\simeq \left(\mathbb{P}^1\right)^{(2)}$ whose generic fibers are Kummer surfaces. This fibration extends to a map $f_Y: Y_S\ra\mathbb{P}^2\simeq \left(\mathbb{P}^1\right)^{(2)}$ whose generic fibers are Kummer surfaces.
By construction the map $f_Y$ is induced on $Y$ by the divisor $l_Y$, since $f_{S^{[2]}}$ is induced on $S^{[2]}$ by the divisor $l_{S^{[2]}}$.
  
Let us now consider $Z_S$. In order to describe the map in the statement we first give an equation for the surface $S$, which exhibits $S$ as double cover of $\mathbb{P}^1_{(x_0:x_1)}\times \mathbb{P}^1_{(x_2:x_3)}$: $$X^2=f_{4,4}((x_0:x_1):(x_2:x_3)),$$ where $f_{4,4}$ is a homogeneous polynomials of bidegree $(4,4)$ in $\mathbb{P}^1\times \mathbb{P}^1$. 
The second copy of $S$ in the product $S\times S$ is given by the equation $Y^2=f_{4,4}((y_0:y_1):(y_2:y_3)),$ which exhibits it as double cover of $\mathbb{P}^1\times \mathbb{P}^1$.

The divisor $l_{Z}+m_{Z}$ is  $H_{Z}$ and we already observed that $\varphi_{|H_Z|}:Z_S\ra \mathbb{P}^1\times \mathbb{P}^1\times \mathbb{P}^1\times\mathbb{P}^1\subset\mathbb{P}^{15}$. It exhibits $Z_S$ as double cover of $\mathbb{P}^1\times \mathbb{P}^1\times \mathbb{P}^1\times\mathbb{P}^1$ branched along a threefold of multidegree $(4,4,4,4)$ (by adjunction this is indeed a 4-fold with a trivial canonical bundle). 
The involution $\iota_S\times \iota_S$ acts only on the coordinates $X$ and $Y$, changing the sign so we choose as invariant functions $Z:=XY$, $x_i$ and $y_i$, $i=0,\ldots, 4$.
The equation of $Z_S$ is then 
\begin{equation}\label{eq: ZS double cover P1,4}Z^2=f_{4,4}((x_0:x_1):(x_2:x_3))f_{4,4}((y_0:y_1):(y_2:y_3)).\end{equation}

If we project \eqref{eq: ZS double cover  P1,4} to the first three copies of $\mathbb{P}^1$, one obtains a fibration $Z_S\ra \mathbb{P}^1_{(x_0:x_1)}\times \mathbb{P}^1_{(x_2:x_3)}\times\mathbb{P}^1_{(y_0:y_1)}$ whose generic fibers are the genus 1 curves $Z^2=kf_{4,4}((\overline{y_0}:\overline{y_1}):(y_2:y_3))$, where $\overline{y_i}$ are specific value for $y_i$ and $k$ is a constant which depends on the values of $x_i$. This fibration is induced on $Z_S$ by the map $\varphi_{|l_{Z}+m_{1,Z}|}:Z_S\ra\mathbb{P}^1\times \mathbb{P}^1\times \mathbb{P}^1$.

If we project \eqref{eq: ZS double cover  P1,4} to the first two copies of $\mathbb{P}^1$ one obtains a fibration $Z_S\ra \mathbb{P}^1_{(x_0:x_1)}\times \mathbb{P}^1_{(x_2:x_3)}$ whose generic fibers are isomorphic to $S$ (indeed they are double covers of $\mathbb{P}^1\times \mathbb{P}^1$ branched along the curve of bidegree $(4,4)$ given by $f_{4,4}((y_0:y_1)(y_2:y_3))$). This fibration is induced on $Z_S$ by the map $\varphi_{|l_{1,Z}+m_{1,Z}|}:Z_S\ra \mathbb{P}^1\times \mathbb{P}^1$. By definition, $l_{1,Z}+m_{1,Z}=H_{1,Z}$.

If we project \eqref{eq: ZS double cover  P1,4} to the first and to the third copy of $\mathbb{P}^1$, one obtains a fibration $f_Z:Z_S\ra \mathbb{P}^1_{(x_0:x_1)}\times \mathbb{P}^1_{(y_0:y_1)}$ whose generic fibers are K3 surfaces, not isomorphic to $S$. The fibers over a generic point $((\overline{x_0}:\overline{x_1}):(\overline{y_0}:\overline{y_1}))$, are the double covers of $\mathbb{P}^1_{(x_2:x_3)}\times \mathbb{P}^1_{(y_2:y_3)}$ branched along the curve of bidegree $(4,4)$ $f_4((\overline{x_0}:\overline{x_1}):(x_2:x_3))f_{4}((\overline{y_0}:\overline{y_1}):(x_2:x_3))$. But this curve splits in the union of 8 curves, 4 of bidegree  $(1,0)$ and 4 of bidegree $(0,1)$. So the branch locus of this double cover is singular in 16 points and thus the K3 surfaces obtained by blowing up these points contain 16 disjoint rational curves. This suffices to conclude that each fiber of the fibration $f_Z$ is a Kummer surface (see \cite{Nikulin-Kummers}). To be more precise, the generic fiber is a K3 
surface which contains 24 rational curves (the pull back of the eight curves in the branch locus of the double cover of $\mathbb{P}^1\times \mathbb{P}^1$ and the 16 curves which resolve the singularities) which form a double Kummer configuration (see \cite{Oguiso-Jacobian}). We conclude that the generic fibers are Kummer surfaces of the product of two non isogenous elliptic curves. The fibration $f_Z:Z_S:\ra\mathbb{P}^1_{(x_0:x_1)}\times \mathbb{P}^1_{(y_0:y_1)}$ is induced on $Z_S$ by the map $\varphi_{|l_{Z}|}:Z_S\ra \mathbb{P}^1\times \mathbb{P}^1$. We observe that the map $\sigma_Z$ acts on the basis of this fibration, by switching the two copies of $\mathbb{P}^1$, and does not act on the fibers.

If we project \eqref{eq: ZS double cover  P1,4} to the first copy of $\mathbb{P}^1$, one obtains a fibration $Z_S\ra \mathbb{P}^1_{(x_0:x_1)}$ whose generic fibers are Calabi--Yau 3-folds which are double covers of $\mathbb{P}^1\times \mathbb{P}^1\times \mathbb{P}^1$ branched along a curve of multidegree $(4,4,4)$ which indeed splits into the union of 5 curves of multidegree $(1,0,0)$,$(1,0,0)$,$(1,0,0)$,$(1,0,0)$,$(0,4,4)$ respectively. This fibration is induced on $Z_S$ by the map $\varphi_{|l_{1,Z}|}:Z_S\ra\mathbb{P}^1$.\\

The map $f_Y:\varphi_{|l_{Y}|}:Y_S\ra \left(\mathbb{P}^{1}\times \mathbb{P}^1\right)^{(2)}$ is the fibration induced both by $\varphi_{l^{S^{[2]}}}:S^{[2]}\ra \mathbb{P}^2$ on the quotient $S^{(2)}/\iota_S^{(2)}$ and by $\varphi_{|l^{Z}|}:Z\ra \mathbb{P}^1\times\mathbb{P}^1$ on the quotient $Z_S/\sigma_Z$, by the definition of the divisors $l_Y$, $l_{S^{[2]}}$ and $l_{Z}$.

Generically the fiber of $f_Y$ are the Kummer surfaces, obtained either as quotients of the fibers of the fibration $f_{Z}:Z_S\ra\mathbb{P}^1_{(x_0:x_1)}\times \mathbb{P}^1_{(y_0:y_1)}$ by the involution acting on the fiber of the fibration as $((x_0:x_1),(y_0:y_1))\mapsto ((y_0:y_1),(x_0:x_1))$ or as the quotients of the fibers of the fibration $f_{S^{[2]}}\ra \mathbb{P}^2$ by the involution acting on the generic fiber, which is an Abelian surface, as the involution which sends each point in its opposite.\endproof

\begin{rem}{\rm In Proposition \ref{prop: maps S/iota=P1P1} we considered some divisors in the set $\mathcal{S}:=\{l_{S^{[2]}},l_{1,Z},l_Z,l_{1,Z}+m_{1,Z}=H_{1,Z}, l_Z+m_{1,Z},l_Y\}$. All these divisors are associated to fibrations defined on a variety and indeed they are not big, so one  we can not use the  Kawamata--Viehweg vanishing theorem to compute $h^0(D)$ if $D\in\mathcal{S}$. However by Proposition \ref{chi-formula} one can compute their $\chi(D)$ and one finds: 
$\chi(l_{S^{[2]}})=3$;
$\chi(l_{1,Z})=2$;
$\chi(l_{1,Z}+m_{1,Z})=\chi(H_{1,Z})=4$;
$\chi(l_{Z})=2;$
$\chi(l_Z+m_{1,Z})=8$
$\chi(l_Y)=2$.
We observe that the base of the fibration defined by the divisors $D\in\{l_{S^{[2]}},l_{1,Z},l_{1,Z}+m_{1,Z}=H_{1,Z}, l_Z+m_{1,Z}\}\subset\mathcal{S}$ has the property that it is either the projective space $\mathbb{P}^{\chi(D)-1}$ or it is embedded in the projective space $\mathbb{P}^{\chi(D)-1}$ by the Segre embedding. So one can expect that for these choices of $D$, $\chi(D)=h^0(D)$.
On the other hand this seems surely false if $D$ is either $l_Z$ or $l_Y$.
}\end{rem}

\subsection{$\iota_S$ is the elliptic involution on a generic elliptic fibration on the K3 surface $S$}\label{subsec: S/iota=F4}

In this case $S$ has an elliptic fibration with 24 fibers of type $I_1$ and $\NS(S)\simeq U$, $\iota_S$ restricts to the elliptic involution on each fiber of the elliptic fibration and $W\simeq \mathbb{F}_4$.
Generically $\NS(S)$ is generated by the class of a fiber, $F$, and by the class of the zero section, $O$, whose intersection properties are $F^2=0$, $O^2=-2$, $FO=1$. The divisor $F$ is nef, and it is such that $\varphi_{|F|}:S\ra\mathbb{P}^1$ is the elliptic fibration, so $F$ is not big. We will denote by $F_p$ the fiber of $f$ over the point $p$, i.e. $F_p\simeq f^{-1}(p)\subset S$. The involution $\iota_S$ fixes 2 curves, one is the rational curve $O$, section of the fibration, the other is a trisection, branched with multiplicity 2 on each singular fibers. It is denoted by $C$, it has genus 10 and its class in $\NS(S)$ is $6F+3O$.
We will denote by $H:=4F+2O$. The map $\varphi:S\ra\mathbb{P}^5$ is a $2:1$ map onto the image, which is a cone over a normal quartic rational curve.

\begin{prop}\label{prop: fibrations FX, S/iota=F4}
The map $\varphi_{|F_{S^{[2]}}|}:S^{[2]}\ra (\mathbb{P}^1)^{(2)}\simeq \mathbb{P}^2$ is a fibration whose generic fibers are products of two (non isogenous) elliptic curves.

The map $\varphi_{|F_Z|}:Z_S\ra \mathbb{P}^1\times \mathbb{P}^1\subset \mathbb{P}^3$ is a fibration whose generic fibers are Kummer surfaces of the product of two (non isogeneous) elliptic curves.

The map $\varphi_{|F_Y|}:Y_S\ra (\mathbb{P}^1)^{(2)}\simeq  \mathbb{P}^2$  is a fibration whose generic fibers are Kummer surfaces of the product of two (non isogeneous) elliptic curves. 

The fibration $\varphi_{|F_{1,Z}|}: Z_S\ra \mathbb{P}^1$ is a fibration whose generic fiber is a Calabi--Yau 3-fold of Borcea--Voisin type.
\end{prop}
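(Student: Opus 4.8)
The plan is to run exactly the same kind of argument as in the proofs of Propositions~\ref{prop: maps if NS=2} and~\ref{prop: maps S/iota=P1P1}, using that $\varphi_{|F|}\colon S\to\mathbb{P}^1$ is the given elliptic fibration, whose general fibre $E_p:=F_p$ is a smooth elliptic curve on which $\iota_S$ restricts to the elliptic involution $x\mapsto -x$. First I would record two preliminary facts. Since the $24$ singular fibres of $\varphi_{|F|}$ are of type $I_1$, the family $\{E_p\}_{p\in\mathbb{P}^1}$ is not isotrivial, so for a general pair $(p,q)\in\mathbb{P}^1\times\mathbb{P}^1$ the elliptic curves $E_p$ and $E_q$ are non-isogenous. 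Moreover $\iota_S$ acts trivially on $H^0(S,F)$ (the pencil $|F|$ is pulled back from the base), hence $\iota_S\times\iota_S$ acts trivially on $H^0(S\times S,F_{1,S\times S}+F_{2,S\times S})$ and $\iota_S^{[2]}$ acts trivially on $H^0(S^{[2]},F_{S^{[2]}})$; since all the divisors $F$, $F_{1,S\times S}$, $F_1{}_{,S\times S}+F_2{}_{,S\times S}$ are base-point-free, so are $F_{S^{[2]}},\ F_Z,\ F_Y,\ F_{1,Z}$. By Remark~\ref{rem: divisors H orthogonal to exceptional} these last divisors are orthogonal to the exceptional divisors of the blow-ups $\beta$, so the associated morphisms contract these divisors and factor through the singular models of diagram~\eqref{eq: diagram quotient by D8}; the commutative diagrams then identify the images as the stated projective varieties, $\mathbb{P}^1\times\mathbb{P}^1$ Segre-embedded in $\mathbb{P}^3$, resp.\ $(\mathbb{P}^1)^{(2)}\cong\mathbb{P}^2$.

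The fibration statements then follow fibrewise. On $S\times S$ the map $\varphi_{|F_{1,S\times S}+F_{2,S\times S}|}$ is $(\varphi_{|F|}\circ p_1,\varphi_{|F|}\circ p_2)\colon S\times S\to\mathbb{P}^1\times\mathbb{P}^1$, with fibre $E_p\times E_q$ over $(p,q)$; passing to $S^{(2)}$ gives the Lagrangian fibration $\varphi_{|F_{S^{[2]}}|}\colon S^{[2]}\to(\mathbb{P}^1)^{(2)}\cong\mathbb{P}^2$ (cf.\ \cite[Example 3.5]{sawon03}), whose general fibre over $\{p,q\}$, $p\neq q$, is the abelian surface $E_p\times E_q$, a product of two non-isogenous elliptic curves. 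The involution $\iota_S\times\iota_S$ preserves the base $\mathbb{P}^1\times\mathbb{P}^1$ and acts on the fibre $E_p\times E_q$ as $(-1,-1)$, so $\varphi_{|F_Z|}\colon Z_S\to\mathbb{P}^1\times\mathbb{P}^1\subset\mathbb{P}^3$ is a fibration whose general fibre is a crepant resolution of $(E_p\times E_q)/\langle(-1,-1)\rangle$, that is the Kummer surface $\mathrm{Km}(E_p\times E_q)$; here, as in Proposition~\ref{prop: maps S/iota=P1P1}, one checks that the blow-up of $S\times S$ along $\Fix_{\iota_S}(S)\times\Fix_{\iota_S}(S)$ restricts over a general $p$ to the blow-up of $E_p\times E_q$ at the sixteen fixed points of $(-1,-1)$. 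For $Y_S$, again as in Proposition~\ref{prop: maps S/iota=P1P1}, the map $\varphi_{|F_Y|}\colon Y_S\to(\mathbb{P}^1)^{(2)}\cong\mathbb{P}^2$ is the fibration induced both by $\varphi_{|F_{S^{[2]}}|}$ on $S^{[2]}/\iota_S^{[2]}$, where $\iota_S^{[2]}$ acts on the abelian surface $E_p\times E_q$ as $(-1,-1)$, and by $\varphi_{|F_Z|}$ on $Z_S/\sigma_Z$, where $\sigma_Z$ interchanges the two points $(p,q)$ and $(q,p)$ lying over $\{p,q\}$ and hence imposes no further quotient on a general fibre; in either description the general fibre is $\mathrm{Km}(E_p\times E_q)$.

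Finally, for $F_{1,Z}$ one has $\varphi_{|F_{1,S\times S}|}=\varphi_{|F|}\circ p_1\colon S\times S\to\mathbb{P}^1$, with fibre $E_p\times S$ over $p$ and with $\iota_S\times\iota_S$ acting on it as $(-1)\times\iota_S$. For general $p$ the surface $\Fix_{\iota_S}(S)\times\Fix_{\iota_S}(S)=(O\sqcup C)\times(O\sqcup C)$ meets $E_p\times S$ transversally in $E_p[2]\times(O\sqcup C)$, i.e.\ in eight curves of codimension $2$ (one copy of $O$ and one of $C$ over the zero, and three copies of each over the three points of $C\cap E_p$), and the blow-up $\beta$ restricts over $p$ to the blow-up of $E_p\times S$ along these curves; hence the fibre of $\varphi_{|F_{1,Z}|}\colon Z_S\to\mathbb{P}^1$ over a general $p$ is a crepant resolution of $(S\times E_p)/(\iota_S\times(-1))$, which is by definition a Calabi--Yau $3$-fold of Borcea--Voisin type (cf.\ \cite{Cynk-Hulek}). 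The main obstacle throughout is the verification that these fibrewise descriptions are genuine isomorphisms and not merely birational ones: one must take the base point away from the $24$ singular fibres of $\varphi_{|F|}$, away from the ramification of the trisection $C\to\mathbb{P}^1$, and (for $Y_S$) away from the diagonal of $(\mathbb{P}^1)^{(2)}$, so that the various blow-up centres stay transverse to the fibre and the restriction of each blow-up to the fibre is again the blow-up along the smooth, transverse fixed locus, which is what licenses the identification of the general fibre with an abelian surface, a Kummer surface, or a Borcea--Voisin threefold, respectively.
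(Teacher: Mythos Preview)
Your proposal is correct and follows essentially the same strategy as the paper's proof: both analyse the product fibration $f\times f\colon S\times S\to\mathbb{P}^1\times\mathbb{P}^1$ and track how $\sigma$ and $\iota_S\times\iota_S$ act on base and fibre, obtaining the abelian-surface fibres for $S^{[2]}$, the Kummer fibres for $Z_S$ and $Y_S$, and the Borcea--Voisin fibres for $\varphi_{|F_{1,Z}|}$. The one notable difference is in the last part: the paper verifies the Borcea--Voisin description of the generic fibre of $\varphi_{|F_{1,Z}|}$ by writing down an explicit Weierstrass equation for $S$ and deriving the resulting equation for $(S\times S)/(\iota_S\times\iota_S)$, whereas you argue conceptually that the fibre over $p$ is a crepant resolution of $(E_p\times S)/((-1)\times\iota_S)$; your argument is cleaner and avoids coordinates, while the paper's has the advantage of producing an explicit model that matches \cite[Section 4.4]{CG}. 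Your discussion of transversality of the blow-up centres with the general fibre is in fact more careful than the paper's, which handles this point more implicitly.
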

\proof 
The 4-fold $S\times S$ admits a fibration $f\times f: S\times S\ra \mathbb{P}^1\times \mathbb{P}^1$ whose fiber over the point $(p,q)$ is the product $F_p\times F_q$ and it coincides with the map $\varphi_{|F_{1,S\times S}+F_{2,S\times S}|}:S\times S\ra \mathbb{P}^1\times \mathbb{P}^1$. So the generic fiber of $S\times S\ra \mathbb{P}^1\times \mathbb{P}^1$ is an Abelian surface, which is the product of two elliptic curves (generically non isogeneous). The section of the fibration $f$ defines a section of $f\times f$, passing through the zero of the Abelian surfaces. The involution $\iota_S\times \iota_S$ fixes this section and other three surfaces, which are two $3$-sections and one $9$-section. The involution $\iota_S\times \iota_S$ on $A\simeq E_p\times E_q$ acts sending each point in its inverse with respect to the group law. The automorphism $\sigma$ does not preserve the fibers of the fibration $f\times f$: it acts on the basis, switching the two copies of $\mathbb{P}^1\times \mathbb{P}^1$ and sending 
the fiber $F_
p\times F_q$ to the fiber $F_q\times F_p$. 
This allows to describe the fibrations induced by $\varphi_{|F_{S\times S}|}$ on the quotients of $S\times S$ as follows:
the hyperk\"ahler fourfold $S^{[2]}$ naturally admits a Lagrangian fibration $f^{[2]}:S^{[2]}\ra\PP^2$, whose fibers are generically the product of the corresponding fibers on the K3 surface $S$. Indeed, $f\times f$ is equivariant with respect to the action of the exchange $\sigma$; hence, we get an induced fibration $f^{(2)}$ of $S^{(2)} $ over $(\PP^1)^{(2)}\cong\PP^2$. The so-called natural Lagrangian fibration $f^{[2]}$ is the composition $f^{(2)}\circ \beta_{\Fix(\sigma)}$, where $\beta_{\Fix(\sigma)}$ is the resolution $S^{[2]}\ra S\times S/\sigma$ and it coincides with $\varphi_{|F_{S^{[2]}}|}$. Let $\Delta_S\subset S\times S$ be the diagonal and let $E_{\Delta_S}$ be the exceptional divisor on $S^{[2]}$. By \cite[Theorem 1]{Fu}, $f^{[2]}(E)$ is one-dimensional, so that the generic fiber of $f^{[2]}$ does not intersect $E$ and is isomorphic to the generic fiber of $f^{(2)}$, which is the common image of $F_p\times F_q$ and of $F_q\times F_p$, still isomorphic to  $F_p\times F_q$.

The automorphism $\iota_S$ does not act on the basis of the fibration $f:S\ra \mathbb{P}^1$, so the basis of the fibration induced by $f\times f$ on $(S\times S)/(\iota_S\times \iota_S)$ is $\mathbb{P}^1\times \mathbb{P}^1$. The fiber over the generic point $(p,q)\in\mathbb{P}^1\times \mathbb{P}^1$ of the fibration $(S\times S)/(\iota_S\times \iota_S)\ra\mathbb{P}^1\times \mathbb{P}^1$ are the quotients of the Abelian surfaces $F_p\times F_q$, fibers of $f\times f:S\times S\ra \mathbb{P}^1\times \mathbb{P}^1$, by the involution which sends each point in its inverse with respect to the group law. The fibration $(S\times S)/(\iota_S\times \iota_S)\ra\mathbb{P}^1\times \mathbb{P}^1$ extends to a fibration $Z_S\ra \mathbb{P}^1\times \mathbb{P}^1$ whose fiber over a generic point $(p,q)$ is the  Kummer surfaces $\mathrm{Km}(F_p\times F_q)$ desingularizations of the singular fibers of $(S\times S)/(\iota_S\times \iota_S)\ra\mathbb{P}^1\times \mathbb{P}^1$. By construction this fibration is given by the map $\
varphi_
{|F_Z|}:Z_S\ra \mathbb{P}^1\times \mathbb{P}^1$. The strict transform of $O\times O$ is a  section of the fibration and it meets the generic fiber in a rational curve. We recall that a Kummer surface $\mathrm{Km}(A)$ contains 16 disjoint rational curves which are in $1:1$ 
correspondence with the 2-torsion points in $A$. The "zero section" of the fibration $Z_S\ra \mathbb{P}^1\times \mathbb{P}^1$ is the section which meets the smooth fibers (which are a Kummer surfaces $\mathrm{Km}(F_p\times F_q)$) in the rational point which correspond to the 0 of the Abelian surface $F_p\times F_q$. Similarly the strict transform of  $O\times C$ (resp. $C\times O$, $C\times C$) meets the smooth fibers in 3 (resp. $3, 9$) rational curves, corresponding to other 3 (resp. 3,9) points of order 2 on $F_p\times F_q$.

The automorphism $\sigma_Z$ acts on the basis of this fibration $\varphi_{|F_Z|}:Z_S\ra \mathbb{P}^1\times \mathbb{P}^1$. Outside of the diagonal of $\mathbb{P}^1\times \mathbb{P}^1$, $\sigma_Z$ identifies two fibers (the fiber $\mathrm{Km}(F_p\times F_q)$ with the fiber $\mathrm{Km}(F_q\times F_p)$). This identification sends the 2-torsion point of $F_p\times F_q$ to the one of $F_q\times F_p$. So on $Z_S/\sigma_{Z}$ we have a fibration whose generic fibers are Kummer surfaces. In particular the map $\varphi_{|F_Y|}:S\ra (\mathbb{P}^1)^{(2)}\simeq \mathbb{P}^2$ defines a fibration whose generic fibers are Kummer surfaces of the product of 2 non isogenous elliptic curves. We observe that the generic fibers of this fibration are not isomorphic, but all of them are polarized with the same lattice. This fibration has a section, induced by the section of the fibration $Z_S\ra \mathbb{P}^1\times \mathbb{P}^1$.

The fibration $F_{1,Z}:Z_S\ra \mathbb{P}^1_{\tau}$ exhibits $Z_S$ as a fibration in Calabi--Yau 3-folds and the fiber over a generic point $\tau\in\mathbb{P}^1$  is the Borcea-Voisin of $S\times F_{\tau}$, i.e. it is the desingularization of $(S\times F_{\tau})/(\iota_S\times\iota_{F_{\tau}})$, where $\iota_{F_\tau}$ is the elliptic involution on the elliptic curve $F_t$ and it is the restriction of $\iota_S$ to the fiber $F_\tau$ of the fibration $S\ra\mathbb{P}^1$. This easily follows by our construction but can also be written explicitly by using the equations of the fibrations. Let us write the equation of $S\ra\mathbb{P}^1_t$ as $y^2=x^3+a(\tau)x+b(\tau)$ (where $a(\tau)$ and $b(\tau)$ are polynomials of degree 8 and 12 respectively). Then the second copy of $S$ in $S\times S$ has an equation of the type $v^2=u^3+a(s)u+b(s)$, which can be written as $v^2=(u^3+a(s:t)uz^2+b(s:t)z^3)z$. This exhibits this second copy of $S$ as double cover of the Hirzebruch surface $\mathbb{F}_4$ with variables $(s:t:x:z)$ 
(cf. \cite{CG}). Since $\iota_S\times \iota_S$ changes the sign of $y$ and $v$, the functions  $Y:=yv^3$, $X:=xv^2$, $\tau$, $t$, $s$ are invariant, so with these coordinates the equations for $(S\times S)/(\iota_S\times \iota_S)$ are 
$$Y^2=X^3+a(\tau)X\left(x^3+a(s:t)xz^2+b(s:t)z^3\right)^2z^2+b(\tau)\left(x^3+a(s:t)xz^2+b(s:t)z^3\right)^3z^3.$$
For generic choices of $\tau=\overline{\tau}$, this equation is the equation of the Borcea-Voisin Calabi--Yau 3-folds given in \cite[Section 4.4]{CG}. 
\endproof

\begin{rem}{\rm The fixed locus of $\iota_S$ on $S$ is given by $O\cap C$ and the class of the fixed locus is $O+6F+3O=6F+4O$. This allows to compute $\chi(F_Z)=\chi(F_{1,S\times S}+F_{2,S\times S})/2+h_{\Sigma,Z}/16=2+2=4$. The base of the fibration $\varphi_{|F_Z|}:Z_S\ra\mathbb{P}^1\times \mathbb{P}^1$ is embedded in $\mathbb{P}^{\chi(F_Z)-1}$ by the Segre embedding.
Similarly, one computes $\chi(F_Y)=2+1=3$ and $\chi(F_{S^{[2]}})=3$ and $\chi(F_1,Z)$, and one observes that the bases of the fibrations $\varphi_{|F_Y|}$, $\varphi_{|F_{S^{{2}}}|}$ and $\varphi_{|F_{1,Z}|}$ are again $\mathbb{P}^{\chi-1}$. This suggests that $\chi$ is equal to $h^0$ for all the divisors involved in Proposition \ref{prop: fibrations FX, S/iota=F4}.}\end{rem}

\begin{prop}
The map $\varphi_{|H_{S^{[2]}}|}:S^{[2]}\ra \left(\mathbb{P}^5\right)^{(2)}\subset \mathbb{P}^{20}$ is a $4:1$ map, where the inclusion $\left(\mathbb{P}^5\right)^{(2)}\subset \mathbb{P}^{20}$ is given by $$i_5:\lbrace(x_0:\ldots :x_5),(y_0:\ldots y_5)\rbrace\mapsto (x_0y_0: x_0y_1+x_1y_0:\ldots :x_iy_j+x_jy_i: \ldots x_5y_5),$$ with $i<j$.

The map $\varphi_{|F_Z|}:Z_S\ra \mathbb{P}^5\times \mathbb{P}^5\subset \mathbb{P}^{35}$ is a $2:1$ map and $\mathbb{P}^5\times \mathbb{P}^5\subset \mathbb{P}^{35}$ is given by the Segre embedding.

The map $\varphi_{|F_Y|}:Y_S\ra \mathbb{P}^{21}$  is a $2:1$ map and the inclusion $\left(\mathbb{P}^5\right)^{(2)}\subset \mathbb{P}^{20}$ is given by $i_5$.
\end{prop}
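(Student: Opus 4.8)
The plan is to repeat, in this third case, the argument used for Propositions \ref{prop: maps if NS=2} and \ref{prop: maps if NS=U(2)}; the only new feature is that $H=4F+2O$, while big, nef and $\iota_S$-invariant, is not ample, because $H\cdot O=0$. Thus $\varphi_{|H|}:S\to\mathbb{P}^5$ contracts the section $O$ and presents $S$ as a $2:1$ cover of $\bar W:=\varphi_{|H|}(S)$, the cone over a rational normal quartic curve, which is the image of $W\cong\mathbb{F}_4$ after contracting its $(-4)$-section. I would begin by recording the intersection numbers $H^2=8$, $H\cdot F=2$, $H\cdot O=0$, $H\cdot C=12$ and $H\cdot(6F+4O)=12$, and then invoke Corollary \ref{cor: divisors induced} to see that $H_{S^{[2]}}$, $H_Z$ and $H_Y$ are big and nef on $S^{[2]}$, $Z_S$ and $Y_S$ respectively; by Kawamata--Viehweg vanishing one then has $h^0=\chi$ for all three, so Theorem \ref{prop: h^0 for the divisors H} applies directly.

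Next I would compute the three dimensions. For $S^{[2]}$, Theorem \ref{prop: h^0 for the divisors H} gives $h^0(H_{S^{[2]}})=\tfrac{1}{8}(H^2+4)(H^2+6)=\tfrac{1}{8}\cdot 12\cdot 14=21=\dim\Sym^2\mathbb{C}^6$, so the target is the $\mathbb{P}^{20}$ containing $i_5((\mathbb{P}^5)^{(2)})$. For $Z_S$ one needs $h_{\Sigma_{S\times S}}$: as $[\Fix_{\iota_S}(S)]=[O]+[C]=6F+4O$ and the self-intersections of the pulled-back classes $H_{i,S\times S}$ vanish, the intersection number on $S\times S$ is $h_{\Sigma_{S\times S}}=2(H\cdot(6F+4O))^2=2\cdot 12^2=288$, whence $h^0(H_Z)=\tfrac{1}{2}\cdot 6^2+\tfrac{1}{16}\cdot 288=36=6\cdot 6$, consistent with the Segre $\mathbb{P}^5\times\mathbb{P}^5\subset\mathbb{P}^{35}$. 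For $Y_S$ one needs $h_{\Sigma_Z}$: the fixed locus of $\sigma_Z$ in $Z_S$ has two components, the strict transforms of the images of $\Delta_S$ and of $\Gamma_S$, each covered $2:1$ by a copy of $S$ along which $H_Z$ pulls back to $2H$; hence $(H_Z|_{\Sigma_i})^2=\tfrac{1}{2}(2H)^2=16$ for $i=1,2$, so $h_{\Sigma_Z}=32$ and $h^0(H_Y)=\tfrac{1}{2}\cdot 36+\tfrac{1}{16}\cdot 32+1=21$, again the ambient $\mathbb{P}^{20}$ of $i_5$.

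To identify the images and degrees I would use the commutative square built from $\varphi_{|H|}\times\varphi_{|H|}:S\times S\to\mathbb{P}^5\times\mathbb{P}^5$ and the relevant order-two quotients, noting via Remark \ref{rem: divisors H orthogonal to exceptional} that each of the three maps contracts the exceptional divisors and hence factors through the singular quotients in diagram \eqref{eq: diagram quotient by D8}. The map $\varphi_{|H|}\times\varphi_{|H|}$ is generically $4:1$ with image $\bar W\times\bar W$; dividing both sides by $\sigma$ exhibits $S^{(2)}\to W^{(2)}$, hence $\varphi_{|H_{S^{[2]}}|}$, as generically $4:1$ onto a copy of $W^{(2)}$ lying in $i_5((\mathbb{P}^5)^{(2)})$; dividing instead by $\iota_S\times\iota_S$ exhibits $(S\times S)/(\iota_S\times\iota_S)\to W\times W$, hence $\varphi_{|H_Z|}$, as generically $2:1$ onto a copy of $W\times W$ inside the Segre $\mathbb{P}^5\times\mathbb{P}^5\subset\mathbb{P}^{35}$; finally, dividing the target $\mathbb{P}^5\times\mathbb{P}^5$ by the exchange and factoring the degree-$4$ map $S\times S\to(S\times S)/\langle\sigma,\iota_S\times\iota_S\rangle$ through it, while recalling that this quotient is birational to $Y_S$ (Proposition \ref{prop: ZS/sigma birat to YS}) and $H_Y$ is pulled back from it, identifies $\varphi_{|H_Y|}$ with a generically $2:1$ map onto a copy of $W^{(2)}$ inside $i_5((\mathbb{P}^5)^{(2)})\subset\mathbb{P}^{20}$.

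The main obstacle is bookkeeping rather than ideas, and comes from $H\cdot O=0$: since $\varphi_{|H|}$ contracts $O$, the product map $\varphi_{|H|}\times\varphi_{|H|}$ is not finite, for it contracts the divisors $O\times S$ and $S\times O$; thus one must read each assertion as ``generically $d:1$ onto the image'', and check that these contracted loci, which are swept out both in the source and in the target, leave the degree over a general point unaffected. The only other delicate point is the computation of $h_{\Sigma_Z}$: one has to use that $H$ is movable to ensure that a general surface in $|H_Z|$ meets each component of the fixed locus of $\sigma_Z$ in points lying away from the ramification of $S\to W$, so that the degree-two pushforward identity $(H_Z|_{\Sigma_i})^2=\tfrac{1}{2}(2H)^2$ is valid, exactly as in the proof of Proposition \ref{prop: maps if NS=2}.
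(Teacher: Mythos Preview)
Your proof is correct and follows exactly the approach the paper intends: its own proof reads in full ``The proof is analogous to the one of Proposition~\ref{prop: maps if NS=2}'', and you have simply carried out that analogy in detail, computing $h^0(H_{S^{[2]}})=21$, $h_{\Sigma_{S\times S}}=2(H\cdot(6F+4O))^2=288$ so $h^0(H_Z)=36$, and $h_{\Sigma_Z}=32$ so $h^0(H_Y)=21$, then identifying the images via the commutative squares exactly as in the earlier proposition. Your observation that $H=4F+2O$ is only big and nef (because $H\cdot O=0$), so that $\varphi_{|H|}$ contracts the section and the maps are only generically $d{:}1$, is a genuine refinement that the paper glosses over; it does not affect the argument since Corollary~\ref{cor: divisors induced} and Theorem~\ref{prop: h^0 for the divisors H} require only big and nef.
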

The proof is analogous to the one of Proposition \ref{prop: maps if NS=2}.

\bibliographystyle{amsplain}
\bibliography{QuotientsHK}

\providecommand{\bysame}{\leavevmode\hbox to3em{\hrulefill}\thinspace}
\providecommand{\MR}{\relax\ifhmode\unskip\space\fi MR }
\providecommand{\MRhref}[2]{%
  \href{http://www.ams.org/mathscinet-getitem?mr=#1}{#2}
}
\providecommand{\href}[2]{#2}
\begin{thebibliography}{10}

\bibitem{Badescu-Beltrametti}
Lucian B{\u{a}}descu and Mauro~C. Beltrametti, \emph{Seshadri positive
  submanifolds of polarized manifolds}, Cent. Eur. J. Math. \textbf{11} (2013),
  no.~3, 447--476.

\bibitem{Batyrev-birat-betti}
Victor~V. Batyrev, \emph{Birational {C}alabi-{Y}au {$n$}-folds have equal
  {B}etti numbers}, New trends in algebraic geometry ({W}arwick, 1996), London
  Math. Soc. Lecture Note Ser., vol. 264, Cambridge Univ. Press, Cambridge,
  1999, pp.~1--11.

\bibitem{beau83}
Arnaud Beauville, \emph{Vari{\'e}t{\'e}s {K}{\"a}hleriennes dont la premiere
  classe de {C}hern est nulle}, J. Differential Geom \textbf{18} (1983), no.~4,
  755--782.

\bibitem{BeauvilleInvolutions}
\bysame, \emph{Antisymplectic involutions of holomorphic symplectic manifolds},
  J. Topol. \textbf{4} (2011), no.~2, 300--304.

\bibitem{bogo}
Fedor~Alekseevich Bogomolov, \emph{On the decomposition of {K}\"ahler manifolds
  with trivial canonical class}, Matematicheskii Sbornik \textbf{135} (1974),
  no.~4, 573--575.

\bibitem{Boissiere-Natural}
Samuel Boissi{\`e}re, \emph{Automorphismes naturels de l'espace de {D}ouady de
  points sur une surface}, Canad. J. Math. \textbf{64} (2012), no.~1, 3--23.

\bibitem{bcs14}
Samuel Boissi{\`e}re, Chiara Camere, and Alessandra Sarti, \emph{Classification
  of automorphisms on a deformation family of hyperk\"ahler fourfolds by
  p-elementary lattices}, to appear in Kyoto J. Math, preprint arXiv:1402.5154.

\bibitem{BS}
Samuel Boissi{\`e}re and Alessandra Sarti, \emph{A note on automorphisms and
  birational transformations of holomorphic symplectic manifolds}, Proc. Amer.
  Math. Soc. \textbf{140} (2012), no.~12, 4053--4062.

\bibitem{Borcea}
Ciprian Borcea, \emph{{$K3$} surfaces with involution and mirror pairs of
  {C}alabi--{Y}au manifolds}, Mirror symmetry, {II}, AMS/IP Stud. Adv. Math.,
  vol.~1, Amer. Math. Soc., Providence, RI, 1997, pp.~717--743.

\bibitem{CamereInvolutions}
Chiara Camere, \emph{Symplectic involutions of holomorphic symplectic
  four-folds}, Bull. Lond. Math. Soc. \textbf{44} (2012), no.~4, 687--702.

\bibitem{CamereMirror}
\bysame, \emph{Lattice polarized irreducible holomorphic symplectic manifolds},
  Annales de l'Institut Fourier \textbf{66} (2016), no.~2, 687--709.

\bibitem{CG}
Andrea Cattaneo and Alice Garbagnati, \emph{{C}alabi--{Y}au 3-folds of
  {B}orcea--{V}oisin type and elliptic fibrations}, to appear in Tohoku
  Mathematical Journal, preprint arXiv:1312.3481.

\bibitem{Cynk-Hulek}
S{\l}awomir Cynk and Klaus Hulek, \emph{Higher-dimensional modular
  {C}alabi-{Y}au manifolds}, Canad. Math. Bull. \textbf{50} (2007), no.~4,
  486--503.

\bibitem{Cynk-vanStraten}
S{\l}awomir Cynk and Duco van Straten, \emph{Infinitesimal deformations of
  double covers of smooth algebraic varieties}, Math. Nachr. \textbf{279}
  (2006), no.~7, 716--726.

\bibitem{Debarre}
Olivier Debarre, \emph{Higher-dimensional algebraic geometry}, Universitext,
  Springer-Verlag, New York, 2001.

\bibitem{Denef-Loeser}
Jan Denef and Fran{\c{c}}ois Loeser, \emph{Germs of arcs on singular algebraic
  varieties and motivic integration}, Invent. Math. \textbf{135} (1999), no.~1,
  201--232.

\bibitem{Dillies}
Jimmy Dillies, \emph{{Generalized Borcea--Voisin Construction}}, Letters in
  Mathematical Physics \textbf{100} (2012), no.~1, 77--96.

\bibitem{dolgachevmirror}
Igor~V. Dolgachev, \emph{Mirror symmetry for lattice polarized {$K3$}
  surfaces}, J. Math. Sci. \textbf{81} (1996), no.~3, 2599--2630, Algebraic
  geometry, 4.

\bibitem{Esnault-Viehweg}
H{\'e}l{\`e}ne Esnault and Eckart Viehweg, \emph{Lectures on vanishing
  theorems}, DMV Seminar, vol.~20, Birkh\"auser Verlag, Basel, 1992.

\bibitem{Ferretti}
Andrea Ferretti, \emph{Special subvarieties of {EPW} sextics}, Math. Z.
  \textbf{272} (2012), no.~3-4, 1137--1164.

\bibitem{Fu}
Baohua Fu, \emph{Abelian fibrations on {$S^{[n]}$}}, C. R. Math. Acad. Sci.
  Paris \textbf{337} (2003), no.~9, 593--596.

\bibitem{Fujiki}
Akira Fujiki, \emph{On primitively symplectic compact {K}\"ahler
  {$V$}-manifolds of dimension four}, Classification of algebraic and analytic
  manifolds ({K}atata, 1982), Progr. Math., vol.~39, Birkh\"auser Boston,
  Boston, MA, 1983, pp.~71--250.

\bibitem{FUJ}
\bysame, \emph{Finite automorphism groups of complex tori of dimension two},
  Publications of the Research Institute for Mathematical Sciences \textbf{24}
  (1988), no.~1, 1--97.

\bibitem{Fulton}
William Fulton, \emph{Intersection theory}, second ed., Ergebnisse der
  Mathematik und ihrer Grenzgebiete. 3. Folge. A Series of Modern Surveys in
  Mathematics [Results in Mathematics and Related Areas. 3rd Series. A Series
  of Modern Surveys in Mathematics], vol.~2, Springer-Verlag, Berlin, 1998.

\bibitem{grosscalabi}
Mark Gross, Daniel Huybrechts, and Dominic Joyce, \emph{Calabi--{Y}au manifolds
  and related geometries: lectures at a summer school in {N}ordfjordeid,
  {N}orway, {J}une 2001}, Springer Science \& Business Media, 2012.

\bibitem{Guan}
Daniel Guan, \emph{{On the {B}etti numbers of irreducible compact
  hyperk{\"a}hler manifolds of complex dimension four}}, Math. Res. Lett.
  \textbf{8} (2001), no.~5-6, 663--669.

\bibitem{HT_kum}
Brendan Hassett and Yuri Tschinkel, \emph{Hodge {T}heory and {L}agrangian
  {P}lanes on {G}eneralized {K}ummer {F}ourfolds}, Moscow Mathematical Journal
  \textbf{13} (2013), no.~1, 33--56.

\bibitem{HuybrechtsMirror}
Daniel Huybrechts, \emph{Moduli spaces of hyperk{\"a}hler manifolds and mirror
  symmetry}, Intersection theory and moduli (2004), 185--247.

\bibitem{ikkr}
Atanas Iliev, Grzegorz Kapustka, Micha{\l} Kapustka, and Kristian Ranestad,
  \emph{Hyperk{\"a}hler fourfolds and {K}ummer surfaces}, arXiv:1603.00403.

\bibitem{joyce}
Dominic~D. Joyce, \emph{Compact manifolds with special holonomy}, Oxford
  Mathematical Monographs, Oxford University Press, Oxford, 2000.

\bibitem{Kawamata}
Yujiro Kawamata, \emph{Flops connect minimal models}, Publications of the
  Research Institute for Mathematical Sciences \textbf{44} (2008), no.~2,
  419--423.

\bibitem{kawat}
Kotaro Kawatani, \emph{On the birational geometry for irreducible symplectic
  4-folds related to the {F}ano schemes of lines}, arXiv:0906.0654.

\bibitem{kollarbir}
J{\'a}nos Koll{\'a}r and Shigefumi Mori, \emph{Birational geometry of algebraic
  varieties}, vol. 134, Cambridge University Press, 2008.

\bibitem{Logachev}
Dmitry Logachev, \emph{Fano threefolds of genus 6}, Asian J. Math. \textbf{16}
  (2012), no.~3, 515--559.

\bibitem{MenetPhD}
Gregoire {Menet}, \emph{On the integral cohomology of quotients of complex
  manifolds}, arXiv:1208.3750.

\bibitem{mtwkum}
Giovanni Mongardi, K{\'e}vin Tari, and Malte Wandel, \emph{Automorphisms of
  generalised {K}ummer fourfolds}, arXiv:1512.00225.

\bibitem{mw}
Giovanni Mongardi and Malte Wandel, \emph{Induced automorphisms on irreducible
  symplectic manifolds}, Journal of the London Mathematical Society (2015),
  jdv012.

\bibitem{Nikulin-Kummers}
Vyacheslav~V. Nikulin, \emph{Kummer surfaces}, Izv. Akad. Nauk SSSR Ser. Mat.
  \textbf{39} (1975), no.~2, 278--293, 471.

\bibitem{Nikulin}
\bysame, \emph{Involutions of integral quadratic forms and their applications
  to real algebraic geometry}, Izvestiya: Mathematics \textbf{22} (1984),
  no.~1, 99--172.

\bibitem{ogr06}
Kieran~G. O'Grady, \emph{Irreducible symplectic 4-folds and
  {E}isenbud-{P}opescu-{W}alter sextics}, Duke Math. J. \textbf{134} (2006),
  no.~1, 99--137.

\bibitem{Oguiso-Kummers}
Keiji {Oguiso}, \emph{No cohomologically trivial non-trivial automorphism of
  generalized {K}ummer manifolds}, arXiv:1312.1584.

\bibitem{Oguiso-Jacobian}
Keiji Oguiso, \emph{On {J}acobian fibrations on the {K}ummer surfaces of the
  product of nonisogenous elliptic curves}, J. Math. Soc. Japan \textbf{41}
  (1989), no.~4, 651--680.

\bibitem{Oguiso-Malte-question}
\bysame, \emph{On automorphisms of the punctual {H}ilbert schemes of {K}3
  surfaces}, Eur. J. Math. \textbf{2} (2016), no.~1, 246--261.

\bibitem{Oguiso-Schroer}
Keiji Oguiso and Stefan Schr{\"o}er, \emph{Enriques manifolds}, J. Reine Angew.
  Math. \textbf{661} (2011), 215--235.

\bibitem{ow}
Hisanori Ohashi and Malte Wandel, \emph{Non-natural non-symplectic involutions
  on symplectic manifolds of {$K3^{[2]}$}-type}, arXiv:1305.6353.

\bibitem{sawon03}
Justin Sawon, \emph{Abelian fibred holomorphic symplectic manifolds}, Turkish
  Journal of Mathematics \textbf{27} (2003), no.~1, 197--230.

\bibitem{Voisin}
Claire Voisin, \emph{Miroirs et involutions sur les surfaces {$K3$}},
  Ast\'erisque (1993), no.~218, 273--323, Journ{\'e}es de G{\'e}om{\'e}trie
  Alg{\'e}brique d'Orsay (Orsay, 1992).

\bibitem{Vo}
\bysame, \emph{Hodge theory and complex algebraic geometry. {I}}, Cambridge
  Studies in Advanced Mathematics, vol.~76, Cambridge University Press,
  Cambridge, 2002, Translated from the French original by Leila Schneps.

\end{thebibliography}
\section{Appendix: Hodge numbers of Calabi--Yau four-folds constructed}\label{Appendix}
Here we collect the Hodge numbers of the Calabi--Yau four-folds constructed in Sections \ref{sec: quotients of hyperkahler} and
\ref{subsec: Y_S}.
In Sections \ref{subsec: generalized Kummer} and \ref{subsec Beauville non natural} we constructed Calabi--Yau four-folds with Hodge numbers:
$$
\begin{array}{rr}
\begin{array}{||c|c|c|c||}
\hline
h^{1,1}&h^{2,1}&h^{3,1}&h^{2,2}\\\hline
    9&  8&  5&  75\\ \hline
    6&  4&  4&  68\\ \hline
    5&  3&  4&  66\\ \hline
\end{array}
\mbox{ and }
\begin{array}{||c|c|c|c||}
\hline
h^{1,1}&h^{2,1}&h^{3,1}&h^{2,2}\\\hline
    2&  0&  65& 312\\ \hline
\end{array}
\end{array}$$
respectively.

In Section \ref{subsec: Y_S} we constructed Calabi--Yau varieties $Y_S$ starting from a K3 surface $S$ with a non symplectic involution $\iota_S$ whose fixed locus contains $N$ curves $C_i$ such that $N'=\sum_{i=0}^Ng(C_i)$. In the following table we list  the Hodge numbers of $Y_S$ in terms of $(N,N')$:
$$
\begin{array}{rr}
\begin{array}{||c|c|c|c|c|c||}
\hline
N&N'&h^{1,1}&h^{2,1}&h^{3,1}&h^{2,2}\\\hline
0&  0&  12& 0&  10& 132\\ \hline
1&  0&  14& 0&  9&  136\\ \hline
1&  1&  13& 1&  10& 134\\ \hline
1&  2&  12& 2&  12& 136\\ \hline
1&  3&  11& 3&  15& 142\\ \hline
1&  4&  10& 4&  19& 152\\ \hline
1&  5&  9&  5&  24& 166\\ \hline
1&  6&  8&  6&  30& 184\\ \hline
1&  7&  7&  7&  37& 206\\ \hline
1&  8&  6&  8&  45& 232\\ \hline
1&  9&  5&  9&  54& 262\\ \hline
1&  10& 4&  10& 64& 296\\ \hline
2&  0&  17& 0&  8&  144\\ \hline
2&  1&  16& 2&  9&  140\\ \hline
2&  2&  15&4&   11& 140\\ \hline
2&  3&  14& 6&  14& 144\\ \hline
2&  4&  13& 8&  18& 152\\ \hline
2&  5&  12& 10& 23& 164\\ \hline
2&  6&  11& 12& 29& 180\\ \hline
2&  7&  10& 14& 36& 200\\ \hline
2&  8&  9&  16& 44& 224\\ \hline
2&  9&  8&  18& 53& 252\\ \hline
2&  10& 7&  20& 63& 284\\ \hline
3&  0&  21& 0&  7&  156\\ \hline
3&  1&  20& 3&  8&  150\\ \hline
3&  2&  19& 6&  10& 148\\ \hline
3&  3&  18& 9&  13& 150\\ \hline
3&  4&  17& 12& 17& 156\\ \hline
3&  5&  16& 15& 22& 166\\ \hline
3&  6&  15& 18& 28& 180\\ \hline
3&  7&  14& 21& 35& 198\\ \hline
4&  0&  26& 0&  6&  172\\ \hline
4&  1&  25& 4&  7&  164\\ \hline
\end{array}
&
\begin{array}{||c|c|c|c|c|c||}
\hline
N&N'&h^{1,1}&h^{2,1}&h^{3,1}&h^{2,2}\\\hline
4&  2&  24& 8&  9&  160\\ \hline
4&  3&  23& 12& 12& 160\\ \hline
4&  4&  22& 16& 16& 164\\ \hline
4&  5&  21& 20& 21& 172\\ \hline
4&  6&  20& 24& 27& 184\\ \hline
5&  0&  32& 0&  5&  192\\ \hline
5&  1&  31& 5&  6&  182\\ \hline
5&  2&  30& 10& 8&  176\\ \hline
5&  3&  29& 15& 11& 174\\ \hline
5&  4&  28& 20& 15& 176\\ \hline
5&  5&  27& 25& 20& 182\\ \hline
5&  6&  26& 30& 26& 192\\ \hline
6&  0&  39& 0&  4&  216\\ \hline
6&  1&  38& 6&  5&  204\\ \hline
6&  2&  37& 12& 7&  196\\ \hline
6&  3&  36& 18& 10& 192\\ \hline
6&  4&  35& 24& 14& 192\\ \hline
6&  5&  34& 30& 19& 196\\ \hline
6&  6&  33& 36& 25& 204\\ \hline
7&  0&  47& 0&  3&  244\\ \hline
7&  1&  46& 7&  4&  230\\ \hline
7&  2&  45& 14& 6&  220\\ \hline
7&  3&  44& 21& 9&  214\\ \hline
8&  0&  56& 0&  2&  276\\ \hline
8&  1&  55& 8&  3&  260\\ \hline
8&  2&  54& 16& 5&  248\\ \hline
9&  0&  66& 0&  1&  312\\ \hline
9&  1&  65& 9&  2&  294\\ \hline
9&  2&  64& 18& 4&  280\\ \hline
10& 0&  77& 0&  0&  352\\ \hline
10& 1&  76& 10& 1&  332\\ \hline
10& 2&  75& 20& 3&  316\\ \hline
\end{array}
\end{array}
$$

One can directly check that there are no mirror pairs in the previous table, except for the self-mirror Calabi--Yau $Y_S$ associated to the values $N'=N+1$ for $N=1,\ldots ,5$.

\end{document}